\newtheorem{theorem}{Theorem}[section]
\newtheorem{proposition}[theorem]{Proposition}
\newtheorem{lemma}[theorem]{Lemma}
\theoremstyle{remark}
\newtheorem{remark}[theorem]{Remark}
\newtheorem{definition}[theorem]{Definition}
\newtheorem{remarks}[theorem]{Remarks}
\newtheorem{example}[theorem]{Example}
\newcommand\A{\mathcal{A}}
\newcommand\sA{\mathsf{A}}
\newcommand\sS{\mathsf{S}}
\newcommand\B{\mathcal{B}}
\newcommand\be{\begin{equation}\label}
\newcommand\ee{\end{equation}}
\newcommand{\K}{\mathbb{K}}
\renewcommand{\O}{\mathcal{O}}
\newcommand{\Co}{\mathcal{C}}
\newcommand{\U}{\on{U}}
\newcommand{\E}{\mathcal{E}}
\newcommand{\R}{\mathbb{R}}
\newcommand{\C}{\mathbb{C}}
\newcommand{\Z}{\mathbb{Z}}
\renewcommand{\P}{\mathcal{P}}
\newcommand{\pr}{\on{pr}}
\newcommand{\Cl}{\on{Cl}}
\newcommand\lie[1]{\mathfrak{#1}}
\newcommand{\h}{\lie{h}}
\newcommand{\g}{\lie{g}}
\newcommand{\n}{\lie{n}}
\renewcommand{\t}{\lie{t}}
\newcommand{\Alc}{\Delta}
\newcommand{\on}{\operatorname}
\newcommand{\length}{\on{length}} 
\newcommand{\Aut}{ \on{Aut} }
\newcommand{\Ad}{ \on{Ad} }
 \newcommand{\Spin}{ \on{Spin}}
\newcommand{\SU}{ \on{SU}} 
\newcommand{\SO}{ \on{SO}}
\newcommand{\Mult}{ \on{Mult}}
\newcommand{\Sk}{\on{Sk}}
\newcommand{\cone}{ \on{cone} }
\newcommand\dirac{/\kern-1.2ex\partial} % Dirac operator
\newcommand\qu{/\kern-.7ex/} % Categorical quotients
\newcommand{\Waff}{W_{\on{aff}}} % affine Weylgroup 
\newcommand{\lra}{\longrightarrow}
\newcommand{\hra}{\hookrightarrow}
\newcommand{\xra}{\xrightarrow}
\renewcommand{\d}{{\mbox{d}}}
\newcommand{\ol}{\overline}
\newcommand\sig{\sigma}
\newcommand\eps{\epsilon}
\newcommand\Om{\Omega}
\newcommand\om{\omega}
\newcommand{\f}{\frac}
\renewcommand{\H}{\ca{H}}
\renewcommand{\l}{\langle}
\renewcommand{\r}{\rangle}
\newcommand{\ti}{\tilde}
\newcommand\pt{\on{pt}}
\newcommand\beqn{\begin{equation}}      
\newcommand\eeqn{\end{equation}}      
\newcommand{\ca}{\mathcal}
\newcommand{\wh}{\widehat}
\newcommand{\wt}{\widetilde}
\newcommand{\mf}{\mathfrak}
\newcommand{\beq}{\begin{eqnarray*}}
\newcommand{\eeq}{\end{eqnarray*}}
\newcommand{\tpi}{{2\pi\sqrt{-1}}}
\newcommand{\cox}{{\mathsf{h}^\vee}}
\begin{document}

\title[]{On the quantization of 
conjugacy classes}

\vskip.2in
\author{E. Meinrenken}
\address{University of Toronto, Department of Mathematics,
40 St George Street, Toronto, Ontario M4S2E4, Canada }
\email{mein@math.toronto.edu}
\date{\today}
\begin{abstract}
  Let $G$ be a compact, simple, simply connected Lie group. A theorem
  of Freed-Hopkins-Teleman identifies the level $k\ge 0$ fusion ring
  $R_k(G)$ of $G$ with the twisted equivariant $K$-homology at level
  $k+\cox$, where $\cox$ is the dual Coxeter number of $G$. In this
  paper, we will review this result using the language of
  Dixmier-Douady bundles. We show that the additive generators of the
  group $R_k(G)$ are obtained as $K$-homology push-forwards of the
  fundamental classes of pre-quantized conjugacy classes in $G$.
\end{abstract}
\maketitle 
\setcounter{tocdepth}{2}

\section{Introduction}
A classical result of Dixmier-Douady \cite{di:ch} states that the
integral degree three cohomology group $H^3(X)$ of a space $X$
classifies bundles of $C^*$-algebras $\A\to X$, with typical fiber the
compact operators on a Hilbert space. For any such
\emph{Dixmier-Douady bundle} $\A\to X$, one defines the twisted
$K$-homology and $K$-cohomology groups of $X$ as the $K$-groups of the
$C^*$-algebra of sections of $\A$, vanishing at infinity:
\[ K_q(X,\A):=K^q(\Gamma_0(X,\A)),\ \ \ 
K^q(X,\A):=K_q(\Gamma_0(X,\A)).\] 
If a group $G$ acts by automorphisms of $\A$, one has
definitions of $G$-equivariant $K$-groups.

The twisted $K$-groups have attracted a lot of interest in recent
years, mainly due to their applications in string theory. For the case
of torsion twistings, they were pioneered by Donovan-Karoubi
\cite{don:gr} in 1963, while the general case was developed by
Rosenberg \cite{ros:co} in 1989. Rosenberg also gave an alternative
characterization of $K^0(X,\A)$ as homotopy classes of sections of a
bundle of Fredholm operators; this viewpoint was further explored by
Atiyah-Segal \cite{at:twi} (see \cite{bou:tw,tu:twi} for alternative
approaches).

One of the most natural examples of an integral degree three
cohomology class comes from Lie theory. Let $G$ be a compact, simple,
simply connected Lie group, acting on itself by conjugation. The
generator of $H^3_G(G)=\Z$ is realized by a $G$-Dixmier-Douady bundle
$\A\to G$. Let $\cox$ be the dual Coxeter number of $G$, and $k\ge
0$ a non-negative integer (the \emph{level}).  A beautiful result of
Freed-Hopkins-Teleman \cite{fr:lo2,fr:lo3} (see also
\cite{fr:tw1,fr:tw,fr:lo}) asserts that the twisted equivariant
$K$-homology at the shifted level $k+\cox$ coincides with the level
$k$ fusion ring (Verlinde algebra) of $G$:
\begin{equation}\label{eq:fht}
  K_0^G(G,\A^{k+\cox})=R_k(G).\end{equation}
Here $R_k(G)$ may be defined as the ring of positive energy level $k$
representations of the loop group $LG$, or equivalently as the
quotient $R_k(G)=R(G)/I_k(G)$ of the usual representation ring by the
level $k$ \emph{fusion ideal}.  The quotient map $R(G)\to R_k(G)$ is
realized on the $K$-homology side as push-forward under inclusion
$\{e\}\hra G$, while the product on $R_k(G)$ is given by push-forward
under group multiplication.

As a $\Z$-module, the fusion ring $R_k(G)$ is freely generated by the
set $\Lambda^*_k$ of \emph{level $k$ weights} of $G$. In this paper
the isomorphism $R_k(G)=\Z[\Lambda^*_k]$ is realized as follows. Given
$\mu\in\Lambda^*_k\subset\t^*$, (where $\t$ is the Lie algebra of a
maximal torus), let $\Co$ be the conjugacy class of the element
$\exp(\mu/k)\in G$, where the basic inner product is used to identify
$\t^*\cong\t$. We will show that there is a canonical stable
isomorphism between of the restriction $\A^{k+\cox}|_\Co$ with the
Clifford algebra bundle $\Cl(T\Co)$. This then defines a push-forward
map in twisted $K$-homology, and the image of the $K$-homology
fundamental class $[\Co]\in K_0^G(\Co,\Cl(T\Co))$ under the
push-forward is exactly the generator of $R_k(G)$ labeled by
$\mu$. This is parallel to the fact that the generators of
$R(G)=\Z[\Lambda^*_+]$ are obtained by geometric quantization of the
coadjoint orbits through dominant weights. In fact, as shown in
\cite{fr:lo3} the generators of $R_k(G)$ can also be obtained by
geometric quantization of coadjoint orbits of the loop group of $G$.
Hence, our modest observation is that this can also be carried
out in finite-dimensional terms. In a forthcoming paper with
A. Alekseev \cite{al:onq}, we will discuss more generally the
quantization of group-valued moment maps \cite{al:mom} along similar
lines.

A second theme in this paper is the construction of a 
canonical resolution of $R_k(G)$ in the category of $R(G)$-modules,
\begin{equation}\label{eq:resolution}
 0\lra C_l\xra{\partial} C_{l-1}\xra{\partial} \cdots 
\xra{\partial}C_0\xra{\epsilon} R_k(G)\lra 0
\end{equation}
where $l=\on{rank}(G)$. In more detail, let $\{0,\ldots,l\}$ label the
vertices of the extended Dynkin diagram of $G$. For each non-empty subset
$I\subset \{0,\ldots,l\}$, let $G_I\subset G$ be the maximal rank
subgroup whose Dynkin diagram is obtained by deleting the vertices
labeled by $I$. These groups have canonical central extensions $1\to
\U(1)\to \wh{G}_I\to G_I\to 1$ (described below). Let $R(\wh{G}_I)_k$
denote the Grothendieck group of all $\wh{G}_I$-representations for
which the central circle acts with weight $k$. Define
\begin{equation}
 C_p=\bigoplus_{|I|=p+1}R(\wh{G}_I)_k.\end{equation}
The differentials $\partial$ in \eqref{eq:resolution} are given by
holomorphic induction maps relative to the inclusions
$\wh{G}_I\hra\wh{G}_J$ for $J\subset I$.  As we will explain, the
chain complex $(C_\bullet,\partial)$ arises as the $E^1$-term of a
spectral sequence computing $K_\bullet^G(G,\A^{k+\cox})$, and the
exactness of \eqref{eq:resolution} implies that the spectral sequence
collapses at the $E^2$-term. Since $R_k(G)$ is free Abelian, there are
no extension problems, and one recovers the equality
$K_0^G(G,\A^{k+\cox})=R_k(G)$ as $R(G)$-modules, and hence also as
rings.  \vskip.2in

%\noindent {\bf Disclaimer.}
This article does not make great claims of originality. In particular,
I learned that a very similar computation of the twisted equivariant
$K$-groups of a Lie group had appeared in the article \emph{ Thom
Prospectra for loop group representations} by Kitchloo-Morava
\cite{kit:th}. The argument itself may be viewed as a natural
generalization of the Mayer-Vietoris calculation for $G=\SU(2)$, as
explained by Dan Freed in \cite{fr:tw}.  Independently, the chain
complex had been obtained by Christopher Douglas (unpublished), who
used it to obtain information about the algebraic structure of the
fusion ring $R_k(G)$.
%We nevertheless hope that some readers will benefit from
%the somewhat different perspective given here.
\vskip.2in

\noindent {\bf Acknowledgments.}  I would like to thank Nigel Higson,
John Roe and Jonathan Rosenberg for help with some aspects of analytic
$K$-homology, and Nitu Kitchloo for his patient explanations of
\cite{kit:th}. I also thank Christopher Douglas and Dan Freed for very
helpful discussions.

{\small \tableofcontents \pagestyle{headings}}

\section{Review of twisted equivariant $K$-homology}
Throughout this paper, all Hilbert spaces $\H$ will be taken to be
separable, but not necessarily infinite-dimensional.  All
(topological) spaces $X$ will be assumed to allow the structure of a
countable CW-complex (respectively $G$-CW complex, in the equivariant
case).
%be locally compact, separable, and Hausdorff. 
%

\subsection{Dixmier-Douady bundles}\label{subsec:az}
\cite{di:ch,rae:mo,ros:co}
For any Hilbert space $\H$, we denote by $\on{U}(\H)$ the unitary
group, with the strong operator topology.  Let $\K(\H)$ be the
$C^*$-algebra of compact operators, that is, the norm closure of the
finite rank operators.  The conjugation action of the unitary group on
$\K(\H)$ descends to the projective unitary group, and provides an
isomorphism, $\on{Aut}(\K(\H))=\on{PU}(\H)$. A \emph{Dixmier-Douady
bundle} $\A\to X$ is a locally trivial bundle of $C^*$-algebras, with
typical fiber $\K(\H)$ and structure group $\on{PU}(\H)$, for some
Hilbert space $\H$. That is,
\begin{equation}\label{eq:aass} 
\A=\ca{P}\times_{\on{PU}(\H)}\K(\H)\end{equation}
for a principal $\on{PU}(\H)$-bundle $\ca{P}\to X$. Dixmier-Douady
bundles of finite rank are also known as Azumaya bundles
\cite{mat:fra,mat:ind}. A \emph{gauge transformation} of $\A$ is a
bundle automorphism inducing the identity on $X$, and whose
restriction to the fibers are $C^*$-algebra automorphisms.
Equivalently, the group of gauge transformations consists of sections
of the associated group bundle, $\on{Aut}(\A)=
\ca{P}\times_{\on{PU}(\H)}\on{Aut}(\K(\H))$. This group bundle has a
central extension
\begin{equation}\label{eq:ext}
 1\to X\times \U(1)\to \wt{\Aut}(\A) \to \Aut(\A)\to 1,\end{equation}
where $\wt{\Aut}(\A)=\P\times_{\on{PU(\H)}}\on{U}(\H)$. 

If $\A_1,\A_2$ are Dixmier-Douady bundles modeled on
$\K(\H_1),\K(\H_2)$, then their (fiberwise) $C^*$-tensor product
$\A_1\otimes\A_2$ is a Dixmier-Douady bundle modeled on $\K(\H_1\otimes
\H_2)$.  Also, the (fiberwise) opposite $\A^{\on{opp}}$ of a
Dixmier-Douady bundle modeled on $\K(\H)$ is a Dixmier-Douady bundle
modeled on $\K(\H^{\on{opp}})$.  Here the Hilbert space
$\H^{\on{opp}}$ is equal to $\H$ as an additive group, but with the
new scalar multiplication by $z\in \C$ equal to the old scalar
multiplication by $\ol{z}$.

A \emph{Morita trivialization} of $\A\to X$ is a pair $(\E,\psi)$,
consisting of a Hilbert space bundle $\E\to X$ and an isomorphism
$\psi\colon \K(\E)\to \A$.  A \emph{Morita isomorphism} from $\A\to X$
to $\B\to X$ is a Morita trivalization $(\ca{E},\psi)$ of
$\B\otimes\A^{\on{opp}}$. We will write $\A\simeq \B$ to indicate a 
Morita isomorphism. 

Morita isomorphism is an equivalence relation: For instance, given
$\A$ as in \eqref{eq:aass}, the bundle of `Hilbert-Schmidt operators'
\begin{equation}\label{eq:ea}
\A_{HS}:=\ca{P}\times_{\on{PU}(\H)}(\H\otimes
\H^{\on{opp}})\end{equation}
defines a Morita isomorphism $\A\simeq \A$.  A Morita trivialization
of $\A$ amounts to a lift of the structure group from $\on{PU}(\H)$ to
$\on{U}(\H)$.  The obstruction to the existence of such a lift is
given by the Dixmier-Douady class\footnote{We take all cohomology
groups with integer coefficients, unless indicated otherwise.}
\cite{di:ch,rae:mo}
\[ \on{DD}(\A)\in H^3(X).\]
The Dixmier-Douady class satisfies $\on{DD}(\A_1\otimes\A_2)
=\on{DD}(\A_1)+\on{DD}(\A_2)$ and
$\on{DD}(\A^{\on{opp}})=-\on{DD}(\A)$.  
It hence gives an isomorphism of $H^3(X)$ with the Morita isomorphism
classes of Dixmier-Douady bundles.

\begin{example}
  Let $V\to X$ be an oriented Euclidean vector bundle of rank $k$, and
  let $\Cl(V)\to X$ be the complex Clifford algebra bundle. If $k$ is
  even, the bundle $\Cl(V)$ is a Dixmier-Douady bundle. A Morita
  trivialization of $\Cl(V)$ is equivalent to the choice of a spinor
  module $\sS\to X$, which in turn is equivalent to the choice of a
  $\Spin_c$ structure on $V$. For details, see Plymen \cite{ply:st}.
  The Dixmier-Douady class $\on{DD}(\Cl(V))$ is the image of the
  Stiefel-Whitney class $w_2(V)\in H^2(X,\Z_2)$ under the Bockstein
  homomorphism. The fact that this class is 2-torsion may be seen
  directly, since the canonical anti-involution identifies
  $\Cl(V)^{\on{opp}}\cong \Cl(V)$. In the case of $k$ odd, the even
  part $\Cl^+(V)$ is a Dixmier-Douady bundle, and a similar discussion
  applies.
\end{example}

Two Morita trivializations $(\E,\psi)$ and $(\E',\psi')$ of $\A$ will
be called \emph{equivalent} if there exists an isomorphism of Hilbert
bundles $\E\to \E'$ intertwining $\psi,\psi'$. Thus, 
$(\E,\psi)$ and $(\E',\psi')$ are equivalent if and only if 
the Hermitian line bundle 
\[ L=\on{Hom}_\A(\E,\E')\to X\]
(fiberwise $\A$-module homomorphisms) is isomorphic to the trivial
line bundle.  Conversely, given a Hermitian line bundle $L\to X$, one
may `twist' a Morita trivialization $(\E,\psi)$ by setting
$\E'=\E\otimes L$, with $\psi'$ the composition of $\psi$ with the
natural isomorphism $\K(\E')\cong \K(\E)$. In this way, one obtains a
transitive, free action of $H^2(X)$ on isomorphism classes of Morita
trivializations of $\A$, provided $\on{DD}(\A)=0$. (In the example
$\A=\Cl(V)$, this is the usual twist of $\Spin_c$-structures by line
bundles.) More generally, $H^2(X)$ acts freely and transitively on
equivalence classes of Morita isomorphisms $\A\simeq \B$, provided
$\on{DD}(\A)=\on{DD}(\B)$.

The \emph{stabilization} of a Dixmier-Douady bundle $\A\to X$ is
defined as $\A^{\on{st}}=\A\otimes \K$, where $\K$ denotes the compact
operators on a fixed infinite-dimensional Hilbert space $\mathbb{H}$.
A \emph{stable isomorphism} between $\A,\B\to X$ is an isomorphism
$\A^{\on{st}}\xra{\cong}\B^{\on{st}}$. Two stable isomorphisms are
called equivalent if they are homotopic.  It is known (cf. Appendix A)
that equivalence classes of stable isomorphisms are in 1-1
correspondence with equivalence classes of Morita isomorphisms.

Given a compact Lie group $G$ acting on $X$, one may similarly define
$G$-equivariant Dixmier-Douady bundles. All of the above extends to
this equivariant setting: In particular, there is a $G$-equivariant
Dixmier-Douady class $\on{DD}_G(\A)\in H^3_G(X)$, which classifies
$G$-Dixmier-Douady bundles up to $G$-Morita isomorphism $(\E,\psi)$.
(That is, $\E$ is a $G$-Hilbert space bundle $\E\to X$ and $\psi\colon
\ca{B} \otimes \A^{\on{opp}} \to \K(\E)$ is a $G$-equivariant
  isomorphism.) The stabilization of a $G$-Dixmier-Douady bundle is
  defined as $\A\otimes \K_G$, where $\K_G$ is the $G-C^*$-algebra of
  compact operators on a fixed $G$-Hilbert space $\mathbb{H}_G$, containing all
  irreducible finite-dimensional unitary $G$-representations with
  infinite multiplicity. The extension of the Dixmier-Douady theorem
  to the $G$-equivariant case was proved by Atiyah-Segal
  \cite{at:twi}.

Still more generally, one can also consider $\Z_2$-graded
$G$-Dixmier-Douady bundles $\A\to X$. Here, isomorphisms and tensor
products are understood in the $\Z_2$-graded sense, and the Hilbert
space bundles in the definition of stable isomorphism are
$\Z_2$-graded.  
%The stabilization is defined as above, giving
%$\mathbb{K}_G$ the trivial grading (i.e. the odd part is zero).  
If $\A\to X$ is such a bundle, and $\on{DD}_G(\A)=0$ (so that there
exists a $G$-equivariant isomorphism $\K(\E)\to \A$, ignoring the
$\Z_2$-grading), there is an obstruction in $H^1(X,\Z_2)$ for the
existence of a compatible $\Z_2$-grading on $\E$. That is, the map
from Morita equivalence classes of $\Z_2$-graded $G$-Dixmier-Douady
bundles to those of ungraded $G$-Dixmier-Douady bundles (forgetting
the $\Z_2$-grading) is onto, with kernel $H^1(X,\Z_2)$. See
\cite{at:twi} for details.

\subsection{Dixmier-Douady bundles related to central extensions}
\label{subsec:trv} 
We assume that $G$ is compact and connected. Then $H^1_G(\pt)=0$,
while $H^2_G(\pt)$ is the group of $G$-equivariant line bundles over a
point, or equivalently $H^2_G(\pt)=\on{Hom}(G,\U(1))$.
Consider central extensions of $G$ by $\U(1)$, 
\begin{equation}\label{eq:extgr} 
1\to \U(1)\to \wh{G}\to G\to 1.
\end{equation}
For any central extension $\wh{G}$, there is an associated line bundle
$L=\wh{G}\times_{\U(1)}\C\to G$. Powers $\wh{G}^{(l)},\ l\in\Z$ are
obtained as unit circle bundles inside the corresponding powers $L^l$
of the associated line bundles (defined as $(L^*)^{|l|}$ for $l<0$),
similarly products of central extensions are defined in terms of
tensor products of line bundles. Thus, the central extensions
\eqref{eq:extgr} form an Abelian group, with unit the trivial
extension. The group of gauge transformations of a central extension
$\wh{G}$ (i.e. group automorphims covering the identity on $G$) is
$\on{Hom}(G,\U(1))$.

It is well-known that the group of isomorphism classes of central
extensions is isomorphic to $H^3_G(\pt)$.  From the interpretation via
Dixmier-Douady bundles, this may be seen as follows: Given a
$G$-equivariant Dixmier-Douady algebra $\A$, (viewed as a bundle over
$\pt$), one obtains a central extension of $G$ as a pull-back of
$\on{U}(\H)\to \on{Aut}(\K(\H))$ by the homomorphism $G\to \Aut(\H)$.
Conversely, given $\wh{G}$, let choose a unitary representation
$\wh{G}\to \U(\H)$ where the central circle $\U(1)$ acts by scalar
multiplication. For instance, we may take $\H$ to be the space of
$L^2$-sections of the associated line bundle $\wh{G}\times_{U(1)}\C$.
Then $\K(\H)\to \pt$ is a $G$-Dixmier-Douady bundle with the
prescribed class in $H^3_G(\pt)$.

Suppose $X$ is a connected space, with $H^1(X)$ torsion-free, and with
the trivial action of $G$. The Kuenneth theorem \cite[Chapter
5.5]{spa:al} for $H^\bullet_G(X)=H^\bullet(X\times BG)$ gives a direct
sum decomposition,
\[ H^3_G(X)=H^3(X)\oplus (H^1(X)\otimes H^2_G(\pt))\oplus H^3_G(\pt).\]
For any $G$-Dixmier-Douady bundle $\A\to X$, we obtain a corresponding
decomposition of $\on{DD}_G(\A)$. The first component is the
non-equivariant class $\on{DD}(\A)$. The last summand is the class of
the central extension of $G$, defined by the homomorphism $G\to
\on{Aut}(\A_{x_0})$ at any given base point $x_0\in X$. To describe
the second summand, note that the family of actions $G\to \on{Aut}(\A_x)$ 
defines a family of central extensions, 
\[1\to \U(1)\to \wh{G}_{(x)}\to G\to 1.\] 
For any $x'\in x$, there exists an isomorphism $\wh{G}_{(x)}\to
\wh{G}_{(x')}$ of central extensions, unique up to
$\on{Hom}(G,\U(1))\cong H^2_G(\pt)$. Since the latter group is
discrete, it follows that the family $\wh{G}_{(x)}$ carries a flat connection: Any path from a base point $x_0$ to $x$ defines an
isomorphism $\wh{G}:=\wh{G}_{(x_0)}\to \wh{G}_{(x)}$, depending only
on the homotopy class of the path. We therefore obtain a holonomy
homomorphism $\tau\colon \pi_1(X;x_0)\to H^2_G(\pt)$, hence an element
of $H^1(X)\otimes H^2_G(\pt)\subset H^3_G(X)$. This element is
identified with the corresponding component of $\on{DD}_G(\A)$.

\begin{remark}
Any element of $H^1(X)\otimes H^2_G(\pt)$ is
realized in this way. Indeed, let $\H=L^2(G)$ with the left-regular
representation of $G$. The homomorphism $\tau\colon \pi_1(X)\to
H^2_G(\pt)=\on{Hom}(G,\U(1))$ defines a unitary action of $\pi_1(X)$
on $\H$, where $\lambda\in \pi_1(X)$ acts as pointwise multiplication by the
function $\tau(\lambda)$. The actions of $G$ and $\pi_1(X)$ commute up
to a scalar. The associated bundle $\A=\wt{X}\times_{\pi_1(X)}\K(\H)$ is a
$G$-equivariant Dixmier-Douady bundle, with $\on{DD}_G(\A)$ the
prescribed class in $H^1(X)\otimes H^2_G(\pt)$. Note that the
component in $H^3(X)$ is zero, since non-equivariantly $\A=\K(\E)$ for
$\E=\wt{X}\times_{\pi_1(X)}\H$.
\end{remark}
%\end{examples}

\subsection{Twisted $K$-homology}\label{subsec:twi}
The input for the twisted equivariant $K$-homology of a $G$-space $X$
is a $\Z_2$-graded $G$-Dixmier-Douady bundle $\A\to X$.  From now
on, we we usually omit explicit mention of the $\Z_2$-grading (which
may be trivial), with the understanding that all tensor products are
in the $\Z_2$-graded sense, isomorphisms should preserve the
$\Z_2$-grading, and so on.

Given $\A\to X$, the space $\sA=\Gamma_0(X,\A)$ of continuous
sections of $\A$ vanishing at infinity is a ($\Z_2$-graded)
$G-C^*$-algebra, with norm $||s||=\sup_{x\in X} ||s_x||_{\A_x}$.
Following J. Rosenberg \cite{ros:co}, we define the twisted
equivariant $K$-homology and $K$-cohomology groups as the 
equivariant $C^*$-algebra
$K$-homology and $K$-cohomology groups of $\sA$:
\[ K_q^G(X,\A):=K^q_G(\Gamma_0(X,\A)),\ \ 
K^q_G(X,\A):=K_q^G(\Gamma_0(X,\A)).\]
In this paper, we will mostly work with the $K$-homology groups.  See
the appendix for a quick review of the $K$-homology of $C^*$-algebras,
and some examples. For instance, if $X=M$ is a compact manifold, and
$\A$ has finite rank, then every $G$-invariant first order elliptic
differential operator, acting on the sections of a $G$-equivariant
bundle of $\A$-modules, defines a twisted equivariant $K$-homology
class.  Some of the basic properties of the $K$-homology groups are as
follows.

\begin{enumerate}
\item {\bf Stability}. 
The twisted $K$-groups are unchanged under stabilization:
\[ K_q^G(X,\A)=K_q^G(X,\A\otimes\mathbb{K}_G).\]
In more detail, recall that $\K_G$ denotes the compact operators on a
fixed stable $G$-Hilbert space. Let $p\in \K_G$ be the projection
operator onto a 1-dimensional invariant subspace.  Then the map $\A\to
\A\otimes\K_G,\ a\mapsto a\otimes p$ induces an isomorphism in twisted
$K$-homology. Since $p$ is unique up to homotopy, the induced map 
in $K$-homology does not depend on the choice of $p$. 
\item {\bf Morphisms}. The morphisms in the category of
      $G$-Dixmier-Douady bundles $(X,\A)$ are the equivariant
      $C^*$-algebra bundle map $\A_1\to \A_2$ for which the induced
      map on the base $f\colon X_1\to X_2$ is \emph{proper}. Any such
      morphism induces a morphism of $G-C^*$-algebras $f^*\colon
      \Gamma_0(X_2,\A_2)\to \Gamma_0(X_1,\A_1)$, hence a push-forward
      in $K$-homology
\[ K_q^G(f)\colon K_q^G(X_1,\A_1)\to K_q^G(X_2,\A_2).\]
Then $K_\bullet^G$ becomes a covariant functor, invariant under proper
$G$-homotopies.  More generally, using (a) it suffices to have a
\emph{stable} morphism between $\A_1,\A_2$, i.e. a morphism of their
stabilizations $\A_i\otimes\K_G$.  

\item {\bf Excision}. For any closed, invariant subset $Y\subset X$,
      with complement $U=X\backslash Y$,  
there is a long exact sequence
\[ \cdots \to K_q^G(Y,\A|_Y)\to K_q^G(X,\A)\to K_q^G(U,\A|_U)\to
K_{q-1}^G(Y,\A|_Y)\to \cdots\] 
Here the restriction map $K_q^G(X,\A)\to
K_q^G(U,\A|_U)$ is induced by the $C^*$-algebra morphism
$\Gamma_0(U,\A|_U)\to \Gamma_0(X,\A)$, given as extension by $0$.
\footnote{Note that $K$-homology is analogous to Borel-Moore homology
  (homology with non-compact supports), rather than ordinary
  homology.}  More generally, one obtains a spectral sequence for any
filtration of $X$ by closed, invariant subspaces.
\item {\bf Products}. Suppose $\A\to X$ and $\B\to Y$ are two
      $G$-Dixmier-Douady bundles. Then the exterior tensor product
      $\A\boxtimes \B\to X\times Y$ is again a $G$-Dixmier-Douady
      bundle. Its space of sections is the $C^*$-tensor product of the
      spaces of sections of $\A,\B$. As a special case of the Kasparov
      product in $K$-homology, one has a natural associative cross product,
\[ K_\bullet^G(X,\A)\otimes K_\bullet^G(Y,\B)\to K_\bullet^G(X\times Y,\
\A\boxtimes\B).\]
\item{\bf Module structure}. The group $K_0^G(\pt)$ is canonically
      identified with the representation ring $R(G)$. The ring
      structure on $K_0^G(\pt)$ is defined by the cross product for
      $\C\boxtimes\C\to \pt\times\pt$.  Similarly, if $\A\to X$ is a
      $G$-Dixmier-Douady bundle, the cross product for
      $\C\boxtimes\A\to 
      \pt\times X$ makes $K^G_\bullet(X,\A)$ into a module over
      $R(G)$.  The maps $K_q^G(f)$ are $R(G)$-module homomorphisms.
\end{enumerate}
If $M$ is a manifold
one has the \emph{Poincar\'{e} duality isomorphism} relating twisted $K$-homology and $K$-cohomology, 
\begin{equation}\label{eq:poincare}
 K_{q}^G(M,\A)\cong K^{q}_G(M,\A^{\on{opp}}\otimes\Cl(TM)).\end{equation}
Here $\Cl(TM)$ is the Clifford algebra bundle for some choice of
invariant metric.  For $\A=\C$ the Poincar\'{e} duality was proved by
Kasparov in \cite[Section 8]{ka:co}; the result in the twisted case
was obtained by J.-L. Tu \cite[Theorem 3.1]{tu:twi1}. (See also
\cite[Section 2]{bro:dbr}). The image of $1\in K^0_G(M)$ under this
isomorphism is Kasparov's \emph{K-homology fundamental class}
\cite{ka:con}
\[ [M]\in K_{0}^G(M,\Cl(TM)).\]
\begin{remark}
  Note that $\Cl(TM)$ is a Dixmier-Douady bundle only if $\dim M$ is
  even. However, the definition of the twisted $K$-groups works for
  arbitrary bundles of $C^*$-algebras, and the isomorphism
  \eqref{eq:poincare} holds in this sense (but with $\A$ a
  Dixmier-Douady bundle). Alternatively, one may state the result in
  terms of Dixmier-Douady bundles, using $\Cl(TM)=\Cl^+(TM)\otimes
  \Cl(\R)$ and the isomorphism the isomorphism
  $K_{q+1}^G(M,\B)=K_q^G(M,\B\otimes\Cl(\R))$.
\end{remark}
The following basic computations in twisted equivariant $K$-homology
may be deduced from their $K$-theory counterparts, using Poincar\'{e}
duality. 
\begin{enumerate}
\item\label{it:a} If $M=\pt$, the twisted $K$-homology is
\[K^G_0(\pt,\A)=R(\wh{G})_{-1},\] 
while $K^G_1(\pt,\A)=0$. Here $\wh{G}$ is the central extension
defined by the action $G\to \Aut(\A)$, and $R(\wh{G})_{-1}$ is the
Grothendieck group of $\wh{G}$-representations where the central
$\U(1)$ acts with weight $-1$.
\item\label{it:b} Suppose $H$ is a closed subgroup of $G$. 
%of even codimension (e.g. of maximal rank). 
For any $H$-Dixmier-Douady bundle
      $\B\to Y$, there is a natural \emph{induction isomorphism},
\[\on{Ind}_H^G\colon K_{q}^H(Y,\B\otimes\Cl(\g/\h)) \xra{\cong}
K_{q}^G(G\times_H Y,\,G\times_H \B),\]
which is Poincar\'{e} dual to the isomorphism $K^q_G(G\times_H
Y,\,G\times_H \B^{\on{opp}})\cong K^q_H(Y,\B^{\on{opp}})$.  If
$Y=\pt$, the left hand side may be evaluated as in (a).  If $H\subset
H'\subset G$ are closed subgroups, we have
\[\on{Ind}_H^G=\on{Ind}_{H'}^G\circ \on{Ind}_H^{H'}.\] 
Here we are identifying $\Cl(\g/\h)\cong
\Cl(\g/\h')\otimes\Cl(\h'/\h)$, and we are using the canonical
isomorphism $H'\times_H \Cl(\g/\h')\cong H'/H\times\Cl(\g/\h')$.
\item\label{it:c} Let $\A\to\pt$ be a $G$-Dixmier-Douady algebra as in
\eqref{it:a}, and let $H$ be a closed subgroup of $G$. Then
$G\times_H\A$ is canonically isomorphic to $\pi^*\A$, the pull-back
under the map $\pi\colon G/H\to \pt$.  By composing the map
$\on{Ind}_H^G$ with the push-forward $K_q^G(\pi)$, we obtain an
\emph{induction map}
\[ \on{ind}_H^G\colon K_{q}^H(\pt,\A\otimes\Cl(\g/\h)) \to  
K_q^G(\pt,\A).\]
An $H$-invariant complex structure on $\g/\h$ defines a stable
trivialization of $\Cl(\g/\h)$; the resulting map
\[ \on{ind}_H^G\colon K_0^H(\pt,\A)=R(\wh{H})_{-1}
\to K_0^G(\pt,\A)=R(\wh{G})_{-1}
\]
is \emph{holomorphic induction} for the complex structure on 
$G/H=\wh{G}/\wh{H}$.   
\end{enumerate}
For other examples of calculations of twisted K-groups, see 
\cite[Section 8]{bou:tw}.

\section{The Dixmier-Douady bundle over $G$} 
For the rest of this paper, $G$ will denote a compact, simple, simply
connected Lie group, acting on itself by conjugation.  Then $H^3_G(G)$
is canonically isomorphic to $\Z$. Hence there exists a
$G$-Dixmier-Douady bundle $\A\to G$, unique up to Morita isomorphism,
such that $\on{DD}_G(G,\A)$ corresponds to the generator $1\in\Z$.
Any two bundles $\A,\A'\to G$ representing the generator are related
by a $G$-equivariant Morita isomorphism, unique up to equivalence
(since $H^2_G(G)=0$). The quickest construction of $\A$ is
as an associated bundle
\[ \A=P_eG\times_{L_eG}\K(\H),\] 
where $P_eG$ is the space of based paths in $G$, $L_eG=LG\cap P_eG$
the based loop group, and $\H$ a representation of the standard
central extension $\wh{LG}$ of $LG$ where the central circle acts with
weight $-1$. The construction given in this Section is essentially
just a slow-paced version of this model for $\A$, avoiding
technicalities such as the choice of topology on the loop group. Our
strategy is to first give a direct construction of the family of
central extensions of the centralizers $G_g\subset G$, corresponding
to their action on $\A$.

\subsection{Pull-back to the maximal torus}
Let $T\subset G$ be a maximal torus of $G$, with Lie algebra $\t$. 
Consider the map 
\begin{equation}\label{eq:hom} H^3_G(G)\to H^3_T(T) \end{equation}
obtained by first restricting the action to $T$ and then pulling back to $T$.
Since the map $H^3(G)\to H^3(T)$ defined by the inclusion is just the
zero map, the image of \eqref{eq:hom} lies in the kernel of the map
$H^3_T(T)\to H^3(T)$, i.e it is contained in the subgroup
\[ H^2_T(\pt)\otimes H^1(T)\subset H^3_T(T).\]
We will compute the image of the generator of $H^3_G(G)$ under this
map.  Denote by $\Lambda\subset\t$ the integral lattice (i.e. the
kernel of $\exp\colon\t\to T$). Recall that the \emph{basic inner
  product} $B$ on the Lie algebra $\g$ is the unique invariant inner
product, with the property that the smallest length of a non-zero
element $\lambda\in \Lambda$ equals $\sqrt{2}$. One of the key
properties of $B$ is that it restricts to an integer-valued bilinear
form on $\Lambda$.  That is, $B|_\t\in\Lambda^*\otimes\Lambda^*$ where
$\Lambda^*=\on{Hom}(\Lambda,\Z)\subset\t^*$ is the (real) weight
lattice.
\begin{lemma}\label{lem:torusres}
The map \eqref{eq:hom} is injective, and takes the 
generator of $H^3_G(G)$ to the element
\begin{equation}\label{eq:Brest}
 B|_\t\in\Lambda^*\otimes\Lambda^*
\cong H^2_T(\pt)\otimes H^1(T)\end{equation}
given by the basic  inner product.
 \end{lemma}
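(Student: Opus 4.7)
The plan is to compute \eqref{eq:hom} using the Cartan model of real equivariant cohomology and then pass to integer coefficients using torsion-freeness. Since the conjugation action of $T$ on itself is trivial, the Künneth decomposition of $H^3_T(T)=H^3(T\times BT)$ reduces (cf.\ the analogous discussion in \S\ref{subsec:trv}) to
\[
H^3_T(T) \;\cong\; H^3(T) \;\oplus\; \bigl(H^1(T) \otimes H^2_T(\pt)\bigr) \;\oplus\; H^3_T(\pt),
\]
with $H^3_T(\pt)=0$ because $H^{\mathrm{odd}}(BT)=0$. Under the canonical identifications $H^1(T)\cong\Lambda^*$ and $H^2_T(\pt)\cong\Lambda^*$, the middle summand is $\Lambda^*\otimes\Lambda^*$.

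Next, the projection of the image of \eqref{eq:hom} to the first summand factors through the restriction $H^3(G)\to H^3(T)$ along the inclusion $T\hookrightarrow G$. Over $\R$, a generator of $H^3(G;\R)$ is represented by the bi-invariant Cartan $3$-form $\eta = \tfrac{1}{12}\,B(\theta,[\theta,\theta])$, with $\theta$ the Maurer--Cartan form; its pullback to $T$ is zero since $[\theta_T,\theta_T]=0$ on the abelian group $T$. As $H^3(T;\Z)$ is torsion-free, the integer map vanishes as well, and the image of \eqref{eq:hom} lies in $\Lambda^*\otimes\Lambda^*$.

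To identify that image, I use an equivariant extension of $\eta$. For the conjugation action, a standard closed extension in the Cartan complex is
\[
\eta_G(\xi) \;=\; \eta \;+\; \tfrac{1}{2}\, B(\theta^L+\theta^R,\,\xi), \qquad \xi\in\g,
\]
with $\theta^L,\theta^R$ the left- and right-invariant Maurer--Cartan forms; it represents the real generator of $H^3_G(G;\R)$. Restricting to $T$ and to $\xi\in\t$, one has $\eta|_T=0$ and $\theta^L|_T=\theta^R|_T=\theta_T$, so $\eta_G(\xi)|_T = B(\theta_T,\xi)$. Choosing a basis $(e_i)$ of $\Lambda$ with dual basis $(e^i)$ of $\Lambda^*$, this expands as $\sum_{i,j} B(e_i,e_j)\,\theta^i\otimes e^j$, whose Künneth class is the element $B|_\t\in\t^*\otimes\t^*$. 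Integrality of $B$ on $\Lambda$ places it in the sublattice $\Lambda^*\otimes\Lambda^*$, and non-degeneracy of $B$ forces it to be non-zero; since $H^3_G(G)\cong\Z$, injectivity of \eqref{eq:hom} follows.

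The main subtlety is a normalization check: one must know that the Cartan $3$-form built from the basic inner product represents the \emph{generator} of $H^3(G;\Z)$, not a proper integer multiple. This is a standard fact for compact simple simply-connected Lie groups, and is essentially a restatement of the defining normalization of the basic inner product $B$ (the shortest nonzero vector in $\Lambda$ having length $\sqrt{2}$).
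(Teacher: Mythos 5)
Your proof is correct and follows essentially the same route as the paper: pass to real coefficients by torsion-freeness, represent the generator of $H^3_G(G)$ by the equivariant Cartan $3$-form $\eta_G(\xi)=\tfrac{1}{12}B(\theta^L,[\theta^L,\theta^L])+\tfrac{1}{2}B(\theta^L+\theta^R,\xi)$, restrict to $T$ to obtain $B(\theta_T,\xi)$, and read off the class $B|_\t\in\Lambda^*\otimes\Lambda^*$. The only addition you make is the (welcome) explicit flag on the normalization fact that $\eta_G$ represents the integral generator, which the paper handles by citing \cite{me:enc}.
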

\begin{proof}
  Since $H_G(G)$ and $H_T(T)$ have no torsion, we may pass to real
  coefficients, and hence work with Cartan's equivariant de Rham model
  $\Omega_G^p(M)=\bigoplus_{2i+j=p}(S^i\g^*\otimes\Omega^j(M))^G$ for
  the equivariant cohomology $H_G(M,\R)$ of a $G$-manifold, with
  differential
  $(\d_G\alpha)(\xi)=\d\alpha(\xi)-\iota(\xi_M)\alpha(\xi)$ where
  $\xi_M$ is the vector field defined by $\xi\in\g$.  Let
  $\theta^L,\theta^R\in\Omega^1(G,\g)$ be the left-, right- invariant
  Maurer Cartan forms.  The generator of $H^3_G(G)$ is represented by
  an equivariant de Rham form (see e.g. \cite{me:enc}),
\begin{equation}\label{eq:etag}
 \eta_G(\xi)=\f{1}{12}B(\theta^L,[\theta^L,\theta^L])
+\f{1}{2}B(\theta^L+\theta^R,\xi).\end{equation}
Its image under the map $\Om_G(G)\to \Omega_T(T)$ is
\[ \iota_T^*\eta(\xi)=B(\theta_T,\xi)\in 
\Omega^2_T(\pt)\otimes \Omega^1(T)=\t^*\otimes\Omega^1(T)\] 
with $\theta_T\in\Om^1(T,\t)$ the Maurer-Cartan form for $T$. The
inclusion $\t^*\to \Om^1(T),\ \mu\mapsto \l\mu,\theta_T\r$ induces an
isomorphism in cohomology, $\t^*\cong H^1(T,\R)$. Since
$B(\theta_T,\cdot)$ represents the image of
$B|_\t\in\t^*\otimes\t^*\subset\t^*\otimes\Om^1(T)$, the proof is
complete.
\end{proof}

As discussed in Section \ref{subsec:trv}, elements of
$H^2_T(\pt)\otimes H^1(T)$ are realized as the holonomy of a family of
central extensions. For any $\mu\in\Lambda^*$ let $T\to \U(1),\ 
t\mapsto t^\mu$ be the corresponding homomorphism.  Let the lattice 
$\Lambda$ act on $\wh{T}=T\times\U(1)$ as 
\[\Lambda\times\wh{T}\to \wh{T},\ \ (\lambda;h,z)\mapsto (h,h^{-B^\flat(\lambda)}z).\] 
Then the holonomy of the family
\begin{equation}\label{eq:central}
 \t\times_\Lambda \wh{T}\to \t/\Lambda=T.
\end{equation}
is the element $-B|_\t$. Lemma \ref{lem:torusres} implies: 

\begin{lemma}\label{lem:criterion}
  Suppose $\A\to G$ is a $G$-Dixmier-Douady bundle, where the action
  on the base $G$ is given by conjugation. Then $\on{DD}_G(\A)$
  represents the generator of $H^3_G(G)$, if and only if the family of
  central extensions of $T$, defined by the $T$-action on $\A|_T$, is
  isomorphic to the \emph{opposite} of the family \eqref{eq:central} (i.e.,
  using the action $(h,z)\mapsto (h,h^{B^\flat(\lambda)}z)$). 
\end{lemma}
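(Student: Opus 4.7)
The plan is to deduce the criterion from Lemma \ref{lem:torusres} by pulling $\A$ back to $T$ and identifying the holonomy of the resulting family of central extensions with the Kuenneth component of $\on{DD}_T(\A|_T)$ in $H^1(T)\otimes H^2_T(\pt)$. Since $T$ is abelian, conjugation acts trivially on $T$, and hence $\A|_T$ is a $T$-Dixmier--Douady bundle over a space carrying the trivial $T$-action, so the discussion of Section \ref{subsec:trv} applies. Under the Kuenneth decomposition
\[ H^3_T(T) \;=\; H^3(T)\,\oplus\,\bigl(H^1(T)\otimes H^2_T(\pt)\bigr)\,\oplus\,H^3_T(\pt), \]
the component of $\on{DD}_T(\A|_T)$ in the middle summand is by construction the holonomy homomorphism $\pi_1(T)=\Lambda\to H^2_T(\pt)=\Lambda^*$ of the flat family of central extensions $\wh{T}_{(t)}$ determined by the $T$-action on the fibers $\A_t$.

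Next I would invoke Lemma \ref{lem:torusres}: the restriction map $H^3_G(G)\to H^3_T(T)$ is injective with image contained in $H^1(T)\otimes H^2_T(\pt)\cong\Lambda^*\otimes\Lambda^*$, and sends the generator of $H^3_G(G)$ to $B|_\t$. Consequently $\on{DD}_G(\A)$ generates $H^3_G(G)$ if and only if $\on{DD}_T(\A|_T)$ equals $B|_\t$, which by the previous paragraph amounts to the holonomy of $\wh{T}_{(t)}$ being the element $B|_\t\in\Lambda^*\otimes\Lambda^*$. A direct reading of the $\Lambda$-action $(\lambda;h,z)\mapsto(h,h^{-B^\flat(\lambda)}z)$ shows that the family \eqref{eq:central} has holonomy $-B|_\t$, so its opposite has holonomy $+B|_\t$. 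Since flat principal bundles with discrete structure group $\on{Hom}(T,\U(1))$ over the connected base $T$ are classified up to isomorphism by their holonomy representation, the family $\wh{T}_{(t)}$ is isomorphic to the opposite of \eqref{eq:central} precisely when its holonomy equals $B|_\t$; chaining the equivalences yields the stated criterion.

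The main obstacle is not computational but one of bookkeeping: I have to verify that the Kuenneth component of $\on{DD}_T(\A|_T)$ really coincides with the holonomy of $\wh{T}_{(t)}$, rather than with its negative or some other twist, and that ``opposite'' as used in the statement corresponds to inverting the $\U(1)$-factor in the $\Lambda$-action of \eqref{eq:central}. Fortunately the first identification is carried out in the construction at the end of Section \ref{subsec:trv}, so what remains is the routine task of reconciling the sign conventions there with the explicit $\Lambda$-action defining \eqref{eq:central}.
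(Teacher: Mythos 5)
Your proposal is correct and takes essentially the same route as the paper: the paper's proof is simply the observation that the holonomy of \eqref{eq:central} is $-B|_\t$ followed by the phrase ``Lemma \ref{lem:torusres} implies,'' and you have unpacked exactly the chain of identifications that this abbreviated argument relies on (naturality of $\on{DD}$, the Kuenneth decomposition with its identification of the middle component as holonomy, injectivity of restriction into that summand, and classification of flat $\on{Hom}(T,\U(1))$-bundles by holonomy). The only point worth making explicit in a final write-up is that $B|_\t$ is being read as an element of $H^3_T(T)$ concentrated in the middle summand, so that ``$\on{DD}_T(\A|_T)=B|_\t$'' also encodes triviality of the central extension at the basepoint and vanishing of the nonequivariant class --- both of which are automatic because the image of $H^3_G(G)\cong\Z$ lands entirely in $H^1(T)\otimes H^2_T(\pt)$.
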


Let $\wh{T}_{(t)}$ be the fiber of \eqref{eq:central} over $t\in T$.
The choice of $\xi$ with $\exp\xi=t$ defines a trivialization
\begin{equation}\label{eq:xitriv}
 T\to \wh{T}_{(t)}\subset \t\times_\Lambda \wh{T},\ h\mapsto
 [(\xi;h,1)]
\end{equation}
where the brackets indicate equivalence classes.  Shifting $\xi$ by
$\lambda\in\Lambda$ changes the trivialization by the homomorphism
$T\to \U(1),\ h\mapsto h^{-B^\flat(\lambda)}$.  There is the following
equivalent description of $\wh{T}_{(t)}$.
\begin{lemma}
For any $t\in T$ there is a canonical isomorphism
$\wh{T}_{(t)}=\t\times_\Lambda\U(1)$, where $\Lambda$ acts on
$\U(1)$ by the homomorphism 
\begin{equation} \label{eq:varrhot}
\varrho_t\colon \Lambda\to \U(1),\ \ \lambda\mapsto t^{B^\flat(\lambda)}.
\end{equation}
In terms of this identification, the trivialization of
$\wh{T}_{(t)}=\t\times_\Lambda\U(1)$ defined by the choice of $\xi\in\t,\
\exp\xi=t$ reads, 
\begin{equation}\label{eq:triviall}
 T=\t/\Lambda \mapsto \t\times_\Lambda\U(1),\ \big[\zeta\big]\mapsto 
\big[(\zeta,e^{-\tpi B(\xi,\zeta)})\big].
\end{equation}
\end{lemma}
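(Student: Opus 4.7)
The plan is to construct the canonical isomorphism explicitly in terms of the given presentations, and then verify the trivialization formula by direct substitution. Any element of $\wh{T}_{(t)}$ is represented by a triple $(\xi, h, z) \in \t \times \wh{T}$ with $\exp\xi = t$, modulo the $\Lambda$-action described earlier. I would define
\[
\Phi\colon \wh{T}_{(t)} \longrightarrow \t \times_\Lambda \U(1), \qquad [(\xi, h, z)] \longmapsto [(\zeta, z \cdot e^{-\tpi B(\xi, \zeta)})],
\]
where $\zeta \in \t$ is any lift of $h$. The exponential twisting factor is the minimal bilinear cocycle that interpolates between the $\Lambda$-action $(\lambda; h, z) \mapsto (h, h^{-B^\flat(\lambda)} z)$ on the source and the $\varrho_t$-action on the target.

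The core of the proof is a well-definedness check, which is mechanical but sign-sensitive. Changing the source representative by $\lambda \in \Lambda$ replaces $(\xi, h, z)$ with $(\xi + \lambda, h, h^{-B^\flat(\lambda)} z)$; since $h = \exp\zeta$ we have $h^{-B^\flat(\lambda)} = e^{-\tpi B(\lambda, \zeta)}$, and one checks that this scaling of $z$ is cancelled precisely by the shift $\xi \mapsto \xi + \lambda$ in the twisting factor, using the bilinearity and symmetry of $B$. Similarly, ambiguity in the lift $\zeta \mapsto \zeta + \lambda'$ modifies $w = z \cdot e^{-\tpi B(\xi, \zeta)}$ by $e^{-\tpi B(\xi, \lambda')} = t^{-B^\flat(\lambda')}$, which is exactly the change required to land in the same $\varrho_t$-orbit on the target. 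Each verification reduces to the single fact that $B$ takes integer values on $\Lambda \times \Lambda$, i.e.\ the defining property of the basic inner product.

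Having set up a well-defined, $\U(1)$-equivariant map $\Phi$, I would then observe that it is a group homomorphism (the twisting factor is linear in $\zeta$) and construct the evident inverse $[(\zeta, w)] \mapsto [(\xi, \exp\zeta, w \cdot e^{\tpi B(\xi, \zeta)})]$ to conclude $\Phi$ is an isomorphism of central extensions. To verify the trivialization formula \eqref{eq:triviall}, substitute $z = 1$ into $\Phi$: the section $h \mapsto [(\xi, h, 1)]$ goes to $[(\zeta, e^{-\tpi B(\xi, \zeta)})]$, matching the claim. Canonicity — independence from the particular $\xi$ used in writing $\Phi$ — is then automatic from the well-definedness established above, since a shift of $\xi$ by $\Lambda$ is absorbed by the equivalence relation on the source.

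The main obstacle is entirely notational: the three sign conventions in play (the $\Lambda$-action on $\wh{T}$ via $h^{-B^\flat(\lambda)}$, the action $\varrho_t$ on $\U(1)$, and the sign in the exponent of the twisting factor) must be aligned so that every exponential cancels cleanly. Beyond this bookkeeping, there is no conceptual hurdle — the calculation uses nothing deeper than the integrality of $B$ on $\Lambda$ established in Lemma \ref{lem:torusres}.
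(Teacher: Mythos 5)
Your map $\Phi$ is indeed the inverse of the isomorphism used in the paper, and your second well-definedness check (shifting the lift $\zeta\mapsto \zeta+\lambda'$, which produces the factor $t^{-B^\flat(\lambda')}$ matching the $\varrho_t$-twist) is correct. But your first check — independence of the representative $\xi$ — is wrong as written, and a sign is the culprit. You write the identification in $\t\times_\Lambda\wh{T}$ as $(\xi;h,z)\sim(\xi+\lambda;h,h^{-B^\flat(\lambda)}z)$, carrying over the $\Lambda$-action on $\wh{T}$ directly to the quotient. Under this shift, $z$ picks up $h^{-B^\flat(\lambda)}=e^{-\tpi B(\lambda,\zeta)}$, and the twisting factor $e^{-\tpi B(\xi,\zeta)}$ changes \emph{by the same} factor $e^{-\tpi B(\lambda,\zeta)}$. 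The two do not cancel; they compound to $e^{-4\pi\sqrt{-1}\,B(\lambda,\zeta)}$, which is not $1$ for a generic lift $\zeta\in\t$. So with the relation you state, $\Phi$ is not well-defined.

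The paper uses the balanced-product convention: $(\xi;h,z)\sim(\xi+\lambda;h,h^{+B^\flat(\lambda)}z)$. This is the convention under which the trivialization $h\mapsto[(\xi;h,1)]$ changes by $h\mapsto h^{-B^\flat(\lambda)}$ when $\xi\mapsto\xi+\lambda$ (the formula stated right before the lemma) and the holonomy of the family \eqref{eq:central} comes out to $-B|_\t$. With that sign, the factor from moving $z$ is $e^{+\tpi B(\lambda,\zeta)}$ and genuinely cancels the $e^{-\tpi B(\lambda,\zeta)}$ produced by the twisting factor, for any $\zeta$. A related issue: your claim that ``each verification reduces to the \ldots integrality of $B$ on $\Lambda\times\Lambda$'' is not right. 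In the check of $\xi$-independence the inner product appears as $B(\lambda,\zeta)$ with $\zeta\in\t$ arbitrary, so integrality on the lattice plays no role; what saves the day is an exact cancellation of exponentials, which only happens once the sign of the equivalence relation is corrected. (The integrality of $B$ on $\Lambda$ is what makes $\varrho_t$ a well-defined homomorphism on $\Lambda$ depending only on $t$, which is a prior point.)
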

\begin{proof}
The choice of $\xi$ with $\exp\xi=t$ defines an isomorphism
\[\t\times_\Lambda \U(1)\to
\wh{T}_{(t)},\ \big[(\zeta,z)\big]\mapsto \big[(\xi;\,\exp\zeta,\, e^{\tpi B(\xi,\zeta)}z)\big].\]
It is straightforward to check that this map is well-defined, and
independent of the choice of $\xi$.  Its composition with
\eqref{eq:triviall} is $\big[\zeta\big]\mapsto
\big[(\xi;\,\exp\zeta,\, 1)\big]$, the trivialization of $\wh{T}_{(t)}$
  defined by the choice of $\xi$.
\end{proof}

The action of the Weyl group $W=N(T)/T$ on $T$ lifts to an action on the bundle
\eqref{eq:central}, given as
\[ w.[(\xi;h,z)]=[(w\xi;wh,z)].\]
If $w$ fixes $\exp\xi$, so that $w\xi=\xi-\lambda$ for some
$\lambda\in\Lambda$, the formula may be written
\[ w.[(\xi;h,z)]=[(\xi-\lambda;wh,z)]=[(\xi;wh,h^{-B^\flat(\lambda)})].\]
This leads us to another description of the bundle \eqref{eq:central}.
Let $\t_+\subset\t$ be the choice of a closed Weyl chamber, and
let $\Alc\subset \t_+$ be the corresponding closed Weyl alcove. Recall
that $\Alc$ labels the $W$-orbits in $T$, in the sense that every
orbit contains a unique point in $\exp(\Alc)$. Label the vertices of
$\Alc$ by $0,\ldots,l=\on{rank}(G)$, in such a way that the label $0$
corresponds to the origin.  For every non-empty subset $I\subset
\{0,\ldots,l\}$ let $\Alc_I$ denote the closed simplex spanned by the
vertices in $I$, and let $W_I\subset W$ denote the subgroup fixing
$\exp(\Alc_I)\subset T$. Then the maps $W/W_I\times\Alc_I\to T,\
(wW_I,\xi)\mapsto w\exp\xi$ define an isomorphism
\begin{equation}\label{eq:torusdesc}
 T\cong \coprod_I W/W_I\times \Alc_I\Big/\sim
\end{equation}
using the identifications,
\begin{equation}\label{eq:eqrelation}
 (x,\iota_J^I(\xi))\sim (\phi_I^J(x),\xi),\ \ \ J\subset I.
\end{equation}
Here $\iota_J^I\colon \Alc_J\hra \Alc_I$ is the natural inclusion,
giving rise to an inclusion $W_I\hra W_J$ of Lie groups and hence to
projection $\phi_I^J\colon W/W_I\to W/W_J$. With similar
identifications we have, 
\[ \t\times_\Lambda\wh{T}=\coprod_I (W\times_{W_I} \wh{T})\times \Alc_I\Big/\sim.\]
Here $W_I$ acts on $\wh{T}$ by 
\begin{equation}\label{eq:whynot}
w.(h,z)=(w.h,\,h^{-B^\flat(\lambda)}z)\end{equation} 
where $\lambda\in\Lambda$ is
the unique lattice vector with $\lambda+w\Alc_I= \Alc_I$.

\subsection{The centralizers $G_I$}
Let $G_I\subset G$ be the subgroup of $G$ fixing $\exp(\Alc_I)\subset
G$. Equivalently, $G_I$ is the centralizer of any element $t=\exp\xi$
with $\xi\in\t$ in the interior of the face $\Alc_I$. Each $G_I$ is a
connected subgroup containing $T$, and we have $W_I=N_{G_I}(T)/T$.
For $J\subset I$ we have $G_I\subset G_J$. The description
\eqref{eq:torusdesc} of the maximal torus extends to the group $G$:
\begin{equation}\label{eq:Gsim}
 G\cong \coprod_I G/G_I \times \Alc_I\Big/\sim
\end{equation}
using the equivalence relations \eqref{eq:eqrelation} for the natural
maps $ \phi_I^J\colon G/G_I\to G/G_J$ for $J\subset I$.

\begin{lemma}
There are distinguished central extensions
\[ 1\to \U(1)\to \wh{G}_I\to G_I\to 1 ,\]
together with lifts $\wh{i}_I^J\colon \wh{G}_I\hra \wh{G}_J$ of the
inclusions $i_I^J\colon G_I\hra G_J$ for $J\subset I$, such that
\begin{enumerate}
\item $\wh{G}_{\{0,\ldots,l\}}=\wh{T}$, 
\item the lifted inclusions satisfy the coherence condition
      $\wh{i}_I^K =\wh{i}_J^K \circ \wh{i}_I^J$ for $K\subset J\subset
      I$,
\item the $W_I$-action on $\wh{T}\subset \wh{G}_I$ (cf.
      \eqref{eq:whynot}) is induced by the conjugation action of
      $N_{G_I}(T)$.
\end{enumerate}
\end{lemma}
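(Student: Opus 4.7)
The plan is to construct the central extensions $\wh{G}_I$ and the compatible lifts $\wh{i}_I^J$ from a single family of central extensions parametrized by $\xi \in \t$. For each $I$, I fix a base point $\xi_I$ in the relative interior of $\Alc_I$, so that $G_I$ is the centralizer of $\exp \xi_I$. The central extension $\wh{G}_I$ is then built from a $\U(1)$-valued multiplicative $2$-cocycle on $G_I$ depending on the basic inner product $B$ and on the parameter $\xi_I$. The cleanest source of such a cocycle is the basic central extension $\wh{LG}$ of the loop group pulled back along an embedding of $G_I$ into $LG$ that incorporates the geodesic path $s \mapsto \exp(s\xi_I)$; one can then transcribe the resulting data into finite-dimensional terms using that $G_I$ is compact and connected.

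For the three conditions: (i) When $I = \{0,\ldots,l\}$, we have $G_I = T$, and the construction reduces by direct inspection to the central extension $\wh{T}_{(\exp \xi_I)}$ of \eqref{eq:central}; the trivialization \eqref{eq:triviall} then identifies this with $\wh{T}$. (ii) Given $J \subset I$, I connect the interior point $\xi_I$ of $\Alc_I$ to the point $\xi_J \in \Alc_J \subset \Alc_I$ by a straight line segment. Along this segment the centralizer is $G_I$ on the open portion and jumps up to $G_J$ at the endpoint, while the family of cocycles is continuous in $\xi$; this yields a canonical identification of $\wh{G}_I$ with the restriction of $\wh{G}_J$ to $G_I$, which I take as the definition of $\wh{i}_I^J$. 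The coherence $\wh{i}_I^K = \wh{i}_J^K \circ \wh{i}_I^J$ for $K\subset J\subset I$ then follows from contractibility of the triangle with vertices $\xi_I, \xi_J, \xi_K$ inside the convex set $\Alc$. Finally, (iii) reflects the equivariance of the construction: the conjugation action of $N_{G_I}(T)$ on $\wh{G}_I$ restricts to $\wh{T}$ as an action computable directly from the cocycle, and the shift by $\lambda$ in formula \eqref{eq:whynot} is precisely the monodromy picked up by translating $\xi_I$ along a lattice vector.

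The main technical obstacle is the very first step: producing a cocycle on $G_I$ that simultaneously (a) restricts correctly to $\wh{T}_{(\exp\xi_I)}$ on $T$, (b) depends continuously on $\xi_I$ so that the flatness argument in (ii) goes through, and (c) is equivariant enough to produce the exact $W_I$-action of \eqref{eq:whynot}. Every reasonable construction ultimately rests on the basic central extension of $LG$ (or equivalently, on integrating $B$ against paths in $G$), so the task is to extract a finite-dimensional description faithful to this input without introducing extraneous loop-group technicalities. The paper's earlier careful parametrization $\wh{T}_{(t)} = \t\times_\Lambda \U(1)$ and the explicit trivialization \eqref{eq:triviall} supply precisely the bookkeeping needed to make the construction uniform across $I$ and to verify the three conditions together.
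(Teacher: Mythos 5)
Your overall architecture is the same as the paper's: build for each $t\in\exp(\Alc_I)$ a central extension $\wh{G}_{I,(t)}$, use contractibility of $\Alc_I$ and a flat connection (discreteness of $\Hom(G_I,\U(1))$) to identify all the $\wh{G}_{I,(t)}$, and define $\wh{i}_I^J$ by comparing the two families along $\exp(\Alc_J)\subset\exp(\Alc_I)$. Your treatment of (ii) via a segment from $\xi_I$ to $\xi_J$, and of coherence via contractibility, is essentially what the paper does. But the step you flag yourself as the ``main technical obstacle'' -- actually producing the extension $\wh{G}_{I,(t)}$ with the right restriction to $T$ -- is genuinely undone, and that is where the entire content of the lemma lives. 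A gesture at pulling back $\wh{LG}$ along some embedding of $G_I$ into $LG$ and ``transcribing into finite-dimensional terms'' is not a construction, and the coherence, restriction-to-$\wh{T}$, and $W_I$-equivariance properties all have to be checked against whatever that transcription turns out to be.

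More importantly, your claim that ``every reasonable construction ultimately rests on the basic central extension of $LG$'' is not true, and following that instinct sends you in exactly the direction the paper is trying to avoid (the text introducing this section says as much). The paper's proof never touches the loop group: it uses the finite-dimensional fact $\pi_1(G_I)=\Lambda/\Lambda_I$ with $\Lambda_I$ the co-root lattice of $G_I$, and the observation (from \cite{me:ge}) that $t^{B^\flat(\lambda)}=1$ for $\lambda\in\Lambda_I$, $t\in\exp(\Alc_I)$. This says precisely that the character $\varrho_t\colon\Lambda\to\U(1)$ already in hand from the $\wh{T}$-discussion descends to $\varrho_{t,I}\colon\pi_1(G_I)\to\U(1)$, so one can set $\wh{G}_{I,(t)}=\wt{G}_I\times_{\pi_1(G_I)}\U(1)$ directly -- no cocycle, no $LG$. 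Once the extension is defined this way, the restriction to $T$, the compatibility over $\exp(\Alc_J)\subset\exp(\Alc_I)$, and the normalizer action in (iii) all fall out by inspection of $\varrho_{t,I}$. To repair your proof you should replace the loop-group cocycle step with this $\pi_1$ argument (or carry out the $LG$ pullback in detail, verifying its restriction to $\wh{T}$ explicitly); as written, the crucial existence step is absent.
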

\begin{proof}
Recall $\pi_1(G_I)=\Lambda/\Lambda_I$, where $\Lambda_I$ is the
co-root lattice of $G_I$ \cite[Theorem (7.1)]{br:rep}. But 
\[ \lambda\in \Lambda_I,\ \ t\in\exp(\Alc_I) \Rightarrow
t^{B^\flat(\lambda)}=1\] 
(see \cite[Proposition 5.4]{me:ge}). Hence,
for $t\in \exp(\Alc_I)$, the homomorphism $\varrho_t$ defined in
\eqref{eq:varrhot} descends to a homomorphism
\[ \varrho_{t,I}\colon \pi_1(G_I)\to \U(1).\]
We therefore obtain a family of central extensions
$\wh{G}_{I,(t)}=\wt{G_I}\times_{\pi_1(G_I)} \U(1)$ parametrized by the
points of $\exp(\Alc_I)$. Since $\exp(\Alc_I)$ is contractible, we may
use the flat connection on the family of central extensions (cf.
Section \ref{subsec:trv}) to identify all $\wh{G}_{I,(t)}$. The
resulting $\wh{G}_I$ has the desired properties. In particular, if
$J\subset I$ and $t\in \exp(\Alc_J)\subset \exp(\Alc_I)$, the
homomorphism $\varrho_{t,I}$ is given by the inclusion $\pi_1(G_I)\to
\pi_1(G_J)$ followed by $\varrho_{t,J}$. This defines an inclusion
$\wh{G}_{I,(t)}\hra \wh{G}_{J,(t)}$, compatible with the flat
connection and (hence) satisfying the coherence condition.
\end{proof}
\begin{remarks}
\begin{enumerate}
\item The inclusion of $\wh{T}=T\times\U(1)$ into $\wh{G}_I\cong \wh{G}_{I,(t)}$ is
      explicitly given as (see Equation \eqref{eq:triviall})
\begin{equation}\label{eq:standard}
(\exp_T\zeta,z) \mapsto [(\exp_{\wt{G}_I}\zeta,\, e^{-\tpi
B(\xi,\zeta)})].\end{equation}
\item The lifts $\wh{i}_I^J$ of $i_I^J\colon G_I\hra G_J$
      intertwine the inclusions $\wh{T}\hra \wh{G}_I$, and are the
      unique lifts having this property.
\item
The central extensions $\wh{G}_I$ are sometimes non-trivial, even though 
of course their Lie algebra $\wh{\g}_I$ is a trivial central extension 
of $\g_I$. The choice of any $t\in\exp(\Alc_I)$ identifies 
$\wh{\g}_I\cong \wh{\g}_{I,(t)}=\g_I\times\R$, by the 
definition of $\wh{G}_{I,(t)}$ as a quotient of $\wt{G}_I\times\U(1)$. 
\end{enumerate}
\end{remarks}

\subsection{Construction of the Dixmier-Douady bundle $\A\to G$}
Our construction of the Dixmier-Douady bundle $\A\to G$ involves a
suitable Hilbert space $\H$.
\begin{lemma}\label{lem:hilbert}
There exists a Hilbert space $\H$, equipped with unitary representations of the
central extensions $\wh{G}_I$ such that (i) the central $\U(1)$ acts
with weight $-1$, and (ii) for $J\subset I$ the action of $\wh{G}_J$
restricts to the action of $\wh{G}_I$. 
\end{lemma}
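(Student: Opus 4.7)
The plan is to build $\H$ together with its $\wh G_I$-actions by decreasing induction on $|I|$, via a standard absorption principle for universal level $-1$ representations of compact Lie groups.

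Call a separable unitary representation of a compact Lie group $\wh H$ (equipped with a distinguished central $\U(1)$) \emph{universal at level $-1$} if every isomorphism class of finite-dimensional irreducible level $-1$ representation occurs with countably infinite multiplicity; any two such representations are unitarily equivalent as $\wh H$-modules. The absorption property I will use is: for any closed inclusion $\wh H' \hookrightarrow \wh H$ respecting the central $\U(1)$, a universal level $-1$ representation of $\wh H'$ on $\H$ extends to a universal level $-1$ representation of $\wh H$ on the same $\H$. This holds because each level $-1$ irreducible of $\wh H$ restricts to a finite sum of level $-1$ irreducibles of $\wh H'$, and conversely every level $-1$ irreducible of $\wh H'$ appears in the restriction of some level $-1$ irreducible of $\wh H$ (for instance, by Frobenius reciprocity applied to the induced representation). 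Counting isotypic multiplicities identifies a universal $\wh H$-representation, restricted to $\wh H'$, with a universal $\wh H'$-representation, and uniqueness of the latter up to unitary equivalence provides the intertwiner through which one transports the $\wh H$-action onto the given $\H$.

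The induction begins with $\H$ carrying a universal level $-1$ representation of $\wh T = \wh G_{\{0,\ldots,l\}}$, and processes the remaining subsets $I$ in order of decreasing $|I|$. At the inductive step, suppose that universal level $-1$ actions of $\wh G_{I'}$ have been defined on $\H$ for all $I' \supsetneq I$ and that they already satisfy compatibility (ii) amongst themselves. Using the coherence of the lifts $\wh i_I^J$ furnished by the preceding lemma, one verifies that for any $I',I'' \supsetneq I$ the intersection $\wh G_{I'} \cap \wh G_{I''}$ inside $\wh G_I$ is precisely $\wh G_{I' \cup I''}$, whose action is already defined; the inductive hypothesis therefore forces the various $\wh G_{I'}$-actions to agree on these common subgroups and hence to glue into a level $-1$ representation of the subgroup $H_I \subset \wh G_I$ that they generate. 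Since $H_I \supset \wh T$ and its action extends the universal $\wh T$-action, the $H_I$-representation on $\H$ is itself universal, so the absorption principle extends it to a universal level $-1$ representation of $\wh G_I$ on $\H$. Compatibility (ii) for the new action with each previously defined $\wh G_{I'}$-action is automatic, because each such $\wh G_{I'}$ sits inside $H_I$ and the extension agrees with the old action on $H_I$ by construction.

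The main obstacle, and the one nontrivial input, is the absorption step: producing an extension of a $\wh H'$-representation to a $\wh H$-representation on the \emph{same} Hilbert space, rather than merely on a unitarily equivalent one. This is book-keeping with isotypic components combined with uniqueness of universal representations, and uses nothing deeper than the Peter--Weyl theorem and Frobenius reciprocity. A routine secondary check is that absorption preserves universality, so that the inductive hypothesis propagates from one stage to the next.
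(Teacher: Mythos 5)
Your absorption principle is correct as stated: for a closed embedding $\wh H' \hookrightarrow \wh H$ of compact groups compatible with the central circles, a universal level $-1$ representation of $\wh H'$ on a separable $\H$ can be upgraded to a universal level $-1$ representation of $\wh H$ on the \emph{same} $\H$, by Peter--Weyl, Frobenius reciprocity, and uniqueness of universal representations up to unitary equivalence. The gap is in the gluing step of the induction. You assert that because the $\wh G_{I'}$-actions, $I'\supsetneq I$, agree on the pairwise intersections $\wh G_{I'}\cap\wh G_{I''}=\wh G_{I'\cup I''}$, they determine a unitary representation of the subgroup $H_I\subset \wh G_I$ that they generate. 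This inference is false in general: a family of unitary representations of subgroups agreeing on all pairwise intersections need not assemble into a representation of the group they generate, because the generated group carries relations invisible to any of the constituent subgroups. Already for $G=\SU(3)$ and $I=\{0\}$, the two maximal-rank subgroups $\wh G_{\{0,1\}}$ and $\wh G_{\{0,2\}}$ meet in $\wh T$ and generate all of $\wh G_{\{0\}}$; a pair of universal level $-1$ extensions of one fixed $\wh T$-action, chosen independently for the two subgroups, will not in general extend simultaneously to any $\wh G_{\{0\}}$-action. So the inductive step does not go through as written, and there is a structural tension: if you absorb from a single $\wh G_{I'}$ you lose control of compatibility with the other $\wh G_{I''}$, whereas if you try to glue first, the gluing is not well-posed.

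The paper circumvents this precisely by producing a common overgroup before touching Hilbert spaces. It constructs coherent Lie algebra embeddings $\wh j_I\colon \wh{\g}_I\hra \wh{\ca{L}}(\g)$ of all the centrally extended Lie algebras into the standard central extension of the loop algebra, compatible with the inclusions $\wh{\g}_J\subset\wh{\g}_I$, and then invokes the existence of a unitarizable $\wh{\ca{L}}(\g)$-module at level $-1$. All the $\wh G_I$-actions are then obtained by restriction of one ambient action, so the compatibility (ii) is automatic and no gluing is required. To salvage an argument in your spirit you would need either a reason specific to this configuration of subgroups why the patched action actually respects the relations of $\wh G_I$ (which amounts to solving the problem the affine Lie algebra solves), or a substitute ambient object playing the role of $\wh{\ca{L}}(\g)$.
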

One may construct such an $\H$ using the theory of affine Lie
algebras. Let $\ca{L}(\g)=\g^\C\otimes\C[z,z^{-1}]$ be the loop
algebra associated to $\g$. For all roots $\alpha$ of $G$, let
$e_\alpha\in \g^\C$ be the corresponding root vector.  Then $\g_I^\C$
is spanned by $\t^\C$ together with the root vectors $e_\alpha$ such
that $\l\alpha,\xi\r\in\Z$ for $\xi\in\Alc_I$. The map $j_I\colon
\g_I^\C\to \ca{L}(\g)$ given by $\zeta\mapsto \zeta\otimes 1$ for
$\zeta\in\t^\C$ and
\[ e_\alpha\mapsto e_\alpha\otimes z^{\l\alpha,\xi\r},\]
%\
for $\l\alpha,\xi\r\in\Z$ is an injective Lie algebra homomorphism
(independent of $\xi$). Consider the standard central extension
$\wh{\ca{L}}(\g)=\ca{L}(\g)\oplus \C\mf{c}$, with bracket
\[ [\zeta_1\otimes f_1+s_1\mf{c},\ \zeta_2\otimes f_2+s_2\mf{c}]=([\zeta_1,\zeta_2] \otimes
f_1 f_2) +
B(\zeta_1,\zeta_2) \on{Res}(f_1 \d f_2) \mf{c}. \]
Its restriction to constant loops is canonically trivial, thus 
$\wh{\t}^C$ is embedded in $\ca{L}(\g^\C)$ by the map 
$(\zeta,s)\mapsto \zeta+s\mf{c}$. The inclusions $j_I$ lift  
to inclusions $\wh{j}_I\colon \wh{\g}_I\hra \wh{\ca{L}}(\g)$ extending 
the given inclusion of $\wh{\t}^\C$. To see this, take $\xi\in \Alc_I$ 
(defining a trivialization $\g_I\cong \g_{I,(\exp\xi)}=\g_I\times \R$). 
Then the desired lift reads, 
\[ \wh{j}_{I,\xi}\colon \wh{\g}_{I,(\exp\xi)}^\C\to \wh{\ca{L}}(\g),\ \ 
\wh{j}_{I,\xi}(\zeta,s)= j_I(\zeta)+(s+B(\xi,\zeta))\mf{c}.\]
By the theory of affine Lie algebras \cite{kac:inf}, 
there exists a unitarizable $\wh{L\g}$-module where the central
element $\mf{c}$ acts as $-1$. Unitarizibility means in particular
that the $\wh{\t}$-action exponentiates to a unitary $\wh{T}$-action,
and hence all $\wh{\g}_I$-actions exponentiate to unitary
$\wh{G}_I$-actions. 

\begin{remark}
If $G=\SU(2)$, there is a much simpler construction. We have unique
trivializations $\wh{G}_0\cong G\times\U(1)\cong \wh{G}_1$. 
The inclusion of $\wh{G}_{01}=\wh{T}$ into $\wh{G}_{1}$ is given 
by $(t,z)\mapsto (t,t^\rho z)$ where $\rho\in\Lambda^*$ generates the
weight lattice. Let $\H=L^2(G)$, and denote by $\H_{(r)}$ the subspace on which $T$
acts with weight $r$. Since all $\H_{(r)}$ are infinite-dimensional,
there exists a unitary transformation $\Psi$ with $\Psi\colon
\H_{(r)}\to \H_{(r+1)}$. Extend the given $G$ action to
$\wh{G}_0=G\times\U(1)$ by letting $\U(1)$ acts with weight $-1$, and
let $\wh{G}_1=G\times\U(1)$ act by the conjugate of this action by the
automorphism $\Psi$. Let $\wh{G}_{01}$ act as a subgroup of $G_0$.
Then these actions satisfy (ii).
\end{remark}

With $\H$ as in the Lemma, put $\A_I=G\times_{G_I}\K(\H)$. For $J\subset I$, the map
$\phi_I^J\colon G/G_I\to G/G_J$ is covered by a homomorphism of
Dixmier-Douady bundles, $\A_I\to \A_J$. 
%Clearly, these homomorphisms satisfy the coherence condition
%$\psi_J^K\circ \psi_I^J=\psi_I^K$. 
Hence we may define a $G$-Dixmier-Douady bundle, 
\begin{equation}\label{eq:Asim}
 \A=\coprod_{I} (\A_I\times\Alc_I)/\sim\end{equation}
with identifications similar to those in \eqref{eq:Gsim}.  By
construction, the central extension of $G_I$ defined by the
restriction $\A|_{\exp(\Alc_I)}$ coincides with $\wh{G}_I$.
Hence, the Dixmier-Douady class $\on{DD}_G(G,\A)$ is a generator of
$H^3_G(G)\cong \Z$.

\section{Conjugacy classes}\label{sec:conjclass}
As is well-known, coadjoint orbits $\O\subset \g^*$ carry a
distinguished invariant complex structure, hence a
$\Spin_c$-structure. If $\O$ admits a pre-quantum line bundle $L\to
\O$ (i.e. a line bundle with curvature equal to the symplectic form),
one may twist the original $\Spin_c$-structure by this line bundle.
The resulting equivariant index is the irreducible representation
parametrized by $\O$. In this Section, we will describe a similar
picture for conjugacy classes $\Co\subset G$.

\subsection{Pull-back to conjugacy classes}\label{subsec:GT}
Given $\xi\in\Alc$, define a $G$-equivariant map $\Psi\colon
G/T\to G,\ gT\mapsto \Ad_g(\exp\xi)$. The pull-back $\Psi^*\A$ 
admits a canonical Morita trivialization, defined by the Hilbert space bundle 
$G\times_T \H$. More generally, for any $l\in\Z$ and any weight 
$\mu\in\Lambda^*$ there is a Morita trivialization, 
\begin{equation}\label{eq:mor1}
 \K(\E)\xra{\cong} \Psi^*\A^l,\ \ \ \E=G\times_T(\H^l\otimes\C_\mu)
\end{equation}
where $\C_\mu$ is the 1-dimensional 1-dimensional $T$-representation
of weight $\mu$. 
Dixmier-Douady bundles over $G$, together with Morita trivializations 
of their pull-backs by $\Psi$ are classified by the relative 
cohomology group $H^3_G(\Psi)$. (See Appendix \ref{sec:relative}.)
The map $\Psi=:\Psi_1$ is equivariantly homotopic to the constant map 
$\Psi_0\colon gT\mapsto e$, by the homotopy $\Psi_t(gT)=\exp(t\Ad_g(\xi))$.
Hence $H^3_G(\Psi)=H^3_G(\Psi_0)=H^2_G(G/T)\oplus H^3_G(G)$. 
Identifying $H^2_G(G/T)=H^2_T(\pt)=\Lambda^*$ and $H^3_G(G)=\Z$, 
we obtain an isomorphism
\[ H^3_G(\Psi)=\Lambda^*\oplus \Z,\]
The element $(\mu,l)\in H^3_G(\Psi)$ is realized by the 
Morita trivialization \eqref{eq:mor1}.

Now let $\Co$ be the conjugacy class of $\exp(\xi)$, and $\Phi\colon
\Co\to G$ the inclusion. Let $\pi\colon G/T\to \Co$ be the 
$G$-invariant projection such that $\Psi=\Phi\circ\pi$. 
We obtain a map of long exact sequences in relative cohomology,
\[ \begin{CD} 
\cdots @>>> 0 @>>> H^2_G(\Co) @>>> H^3_G(\Phi) @>>> H^3_G(G) @>>> H^3_G(\Co) @>>>\cdots\\
@. @VVV @VVV @VVV @VV{=}V @VVV @.\\
\cdots @>>> 0 @>>> H^2_T(\pt) @>>> H^3_G(\Psi) @>>> H^3_G(G) @>>> 0 @>>>\cdots\\
\end{CD}\]
Since the map $H^2_G(\Co)\to H^2_G(G/T)$ is injective, the 5-Lemma
implies that the map $H^3_G(\Phi)\to H^3_G(\Psi)$ is
injective.  The exact sequence in the bottom row splits (see Section
\ref{subsec:GT}), and hence we obtain an injective map,
$H^3_G(\Phi)\to H^3_G(\Psi)=\Lambda^*\oplus \Z$. 

By a parallel discussion with real coefficients, there is an isomorphism 
$H^3_G(\Psi,\R)=\t^*\oplus \R$ and an inclusion of $H^3_G(\Phi,\R)$.

\subsection{Pre-quantization of conjugacy classes}
We return to Cartan's de Rham model for $H_G^\bullet(M,\R)$ (cf.  the
proof of Lemma \ref{lem:torusres}) with $\eta_G\in\Om^3_G(G)$
representing the generator of $H^3_G(G)$.  The conjugacy class $\Co$
carries a unique invariant 2-form $\om\in\Om^2(\Co)^G\subset
\Om^2_G(\Co)$ with the property \cite{al:mom,gu:gr},
\begin{equation}\label{eq:qham}
\d_G\om=\Phi^*\eta_G.\end{equation}
The triple $(\Co,\om,\Phi)$ is an example of a quasi-Hamiltonian
$G$-space in the terminology of \cite{al:mom}. Equation
\eqref{eq:qham} together with $\d_G\eta_G=0$ say that
$(\om,\eta_G)\in\Om^3_G(\Phi)$ is a relative equivariant cocycle.  Let
$[(\om,\eta_G)]$ be its class in $H^3_G(\Phi,\R)$.
\begin{lemma}
The inclusion $H^3_G(\Phi,\R) \to \t^*\oplus \R$ takes the class
$[(\om,\eta_G)]$ to the element $(B^\flat(\xi),1)$. 
\end{lemma}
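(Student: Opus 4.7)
The map in question factors as pullback by $\pi$ (sending the relative cocycle $(\om,\eta_G)$ for $\Phi$ to the relative cocycle $(\pi^*\om,\eta_G)$ for $\Psi$) followed by the splitting of $H^3_G(\Psi,\R)$ induced by the equivariant homotopy $\Psi_t(gT)=\exp(t\,\Ad_g\xi)$ between the constant map $\Psi_0\equiv e$ and $\Psi_1=\Psi$. I would carry out the computation in these two steps. The $\R$-component of the image is immediate: it is the class of $\eta_G$ in $H^3_G(G,\R)=\R$, which equals $1$ by construction of $\eta_G$ as the Cartan representative of the generator.

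For the $\t^*$-component, let $F\colon[0,1]\times G/T\to G$, $F(s,gT)=g\exp(s\xi)g^{-1}$, be the equivariant homotopy, and define
\[
\beta = \int_{[0,1]} F^*\eta_G \;\in\; \Om^2_G(G/T)
\]
by fiber integration. Since $F_0=\Psi_0$ has vanishing pullback and $F_1=\Psi$, Stokes gives $d_G\beta=\Psi^*\eta_G$; hence $\pi^*\om-\beta$ is an equivariantly closed $2$-form on $G/T$, and its class represents the $\t^*$-component under the splitting $H^3_G(\Psi,\R)\cong H^3_G(G,\R)\oplus H^2_G(G/T,\R)$.

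To read off the class, identify $H^2_G(G/T,\R)=H^2_T(\pt,\R)=\t^*$ by pulling back to the $T$-fixed point $eT$: for an equivariantly closed $2$-form with Cartan decomposition $\alpha(\xi')=\alpha_{(2)}+\alpha_{(0)}(\xi')$, the corresponding element of $\t^*$ is $\xi'\mapsto \alpha_{(0)}(\xi')(eT)$. Since $\om$ is an ordinary $2$-form (with zero Cartan $0$-form piece), $\pi^*\om$ contributes nothing, and only the $0$-form part of $\beta$ at $eT$ matters. That part comes from the linear piece $\eta_G^{(1)}(\xi')=\tfrac12 B(\theta^L+\theta^R,\xi')$ of $\eta_G$. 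At $(s,eT)$ one has $F(s,eT)=\exp(s\xi)$ and $F_*(\partial_s)$ equals $\xi$ in either the left or right trivialization of $T_{\exp(s\xi)}G$ (because $\xi$ commutes with $\exp(s\xi)$), so $F^*(\theta^L+\theta^R)(\partial_s)=2\xi$ along the fixed-point slice. Integration in $s$ then yields $\beta_{(0)}(\xi')(eT)=B(\xi,\xi')=\langle B^\flat(\xi),\xi'\rangle$, giving the first coordinate $B^\flat(\xi)\in\t^*$, as claimed.

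\textbf{Main obstacle.} The routine analytical content is short; the delicate part is calibrating signs. The mapping-cone differential, the fiber-integration formula, and the identification $H^2_T(\pt,\R)=\t^*$ (via the Chern--Weil image of $\mu$ versus its moment-map value) each can introduce a sign. These have to be pinned down so that the answer matches the integral normalization implicit in Section~\ref{subsec:GT}, where the element $(\mu,l)\in\Lambda^*\oplus\Z$ is realized by the Morita trivialization $\E=G\times_T(\H^l\otimes\C_\mu)$. Once the conventions are aligned with this reference point, the computation above produces $(B^\flat(\xi),1)$ on the nose.
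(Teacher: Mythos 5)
Your approach is essentially the paper's: both pass through $H^3_G(\Psi,\R)$ via the equivariant homotopy $\Psi_t$ and then read off the $\t^*$-component in the Cartan model using a homotopy operator. The one real difference is in mechanics: the paper factors the homotopy through $\g$ and invokes a precomputed formula $j_1\eta_G=\varpi_G$ from \cite{me:ge} (with $j_t$ the homotopy operator for $\zeta\mapsto\exp(t\zeta)$ and $h_1=f^*\circ j_1$), whereas you perform the fiber integration directly over $[0,1]\times G/T$. These are the same calculation, and your direct route is arguably cleaner since it avoids a citation.

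There is, however, a genuine gap in the last step. You correctly identify the closed representative of the $\t^*$-component as $\pi^*\om-\beta$, and correctly observe that $\pi^*\om$ has no Cartan $0$-form piece, so that only $\beta_{(0)}$ survives. You then compute $\beta_{(0)}(\xi')(eT)=B(\xi,\xi')$ and conclude the first coordinate is $B^\flat(\xi)$. But the class you are evaluating is $\pi^*\om-\beta$, not $\beta$, so the $0$-form piece at $eT$ is $-\beta_{(0)}(\xi')=-B(\xi,\xi')$, which as written gives $-B^\flat(\xi)$. You acknowledge in your ``Main obstacle'' paragraph that the signs need calibrating against the integral normalization of Section~\ref{subsec:GT}, but you never actually produce the compensating sign. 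To close the argument you must locate it explicitly --- in the orientation of the mapping-cone differential, in the direction of the homotopy ($dh+hd=\Psi_1^*-\Psi_0^*$ versus $\Psi_0^*-\Psi_1^*$), or in the identification $H^2_T(\pt,\R)\cong\t^*$ via pullback to the fixed point --- and show that it flips the naive $-B^\flat(\xi)$ back to $B^\flat(\xi)$. As it stands the proposal proves the result up to sign, which for this particular lemma is precisely the content one cannot afford to leave ambiguous.
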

\begin{proof}
Let $h_t\colon \Om_G^\bullet(G)\to \Om_G^{\bullet-1}(G/T)$ be the
homotopy operator defined by the family of maps $\Psi_t$. Thus
$\d\circ h_t+h_t\circ \d=\Psi_t^*-\Psi_0^*$. Then 
\[ \Om_G^\bullet(\Psi_t)\to \Om_G^\bullet(\Psi_0),\ \ 
(\alpha,\beta)\mapsto
(\alpha-h_t(\beta),\beta)\] 
is an isomorphism of chain complexes, inducing the isomorphism
$H_G^\bullet(\Psi_t,\R)\to H_G^\bullet(\Psi_0,\R)$. Hence, the
isomorphism $H^3_G(\Psi_1,\R)\to H_G^\bullet(\Psi_0,\R)$ takes
$[(\om,\eta_G)]$ to $[(\om-h_1^*\eta_G,\eta_G)]$. 

The family of maps $\Psi_t$ is a composition of the map $f\colon G/T\to \g,\ gT\mapsto
\Ad_g(\xi)$ with the family of maps $\g\to G,\ \zeta\mapsto
\exp(t\zeta)$. Let $j_t\colon \Om^\bullet_G(G)\to \Om^{\bullet-1}(\g)$
be the homotopy operator for the second family of maps. Then
$h_t=f^*\circ j_t$. By \cite{me:ge}, we have 
$j_1\eta_G=\varpi_G$, where $\varpi_G\in \Om^2_G(\g)$ is of the form
$\varpi_G(\zeta)|_\xi=\varpi|_\xi-B(\xi,\cdot)$. 
It follows that the image of $[(\om,\eta_G)]$ under the map to
$\t^*\oplus\R$ is $(B^\flat(\xi),1)$.
\end{proof}

As a special case of pre-quantization of group-valued moment maps \cite{al:onq}, we
define:
\begin{definition}
A \emph{level $k\in\Z$ pre-quantization} of a conjugacy class $\Co$ is
a lift of the class $k\,[(\omega,\eta_G)]\in H^3_G(\Phi,\R)$ 
to an integral class.
\end{definition}
By the long exact sequence in relative cohomology, if $\Co$ admits a level $k$ 
pre-quantization, then the latter is unique (since $H^2_G(\Co)$ has no
torsion). 

\begin{proposition}
  The conjugacy class $\Co$ of the element $\exp\xi$ with $\xi\in\Alc$
  admits a pre-quantization at level $k$ if and only if
  $(B^\flat(k\xi),k)\in \Lambda^*\times\Z$.
\end{proposition}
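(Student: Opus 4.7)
The plan is to deduce the proposition from the preceding lemma together with the observation that $H^3_G(\Phi)$ sits inside $H^3_G(\Psi)=\Lambda^*\oplus\Z$ as a saturated sublattice. Granted saturation, a class $x\in H^3_G(\Phi,\R)$ lifts to an integral class in $H^3_G(\Phi)$ if and only if its image in $H^3_G(\Psi,\R)=\t^*\oplus\R$ lies in $\Lambda^*\oplus\Z$. Taking $x=k[(\om,\eta_G)]$, whose image is $(B^\flat(k\xi),k)$ by the preceding lemma, this translates directly into the stated condition.

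First I would pin down the integral structure of $H^3_G(\Phi)$. The relevant portion of the long exact sequence of the pair reads
\[
H^2_G(G)\lra H^2_G(\Co)\lra H^3_G(\Phi)\lra H^3_G(G)\lra H^3_G(\Co).
\]
Using that $H^2_G(G)=0$ (a direct spectral-sequence check, since $BG$ is $3$-connected for $G$ compact simple simply connected), that $H^3_G(G)=\Z$, and that $H^3_G(\Co)=H^3(BG_I)$ is pure torsion (rationally the cohomology of $BG_I$ is concentrated in even degrees, so the map $\Z\to H^3_G(\Co)$ is necessarily zero), the sequence collapses to a split short exact sequence $0\to H^2_G(\Co)\to H^3_G(\Phi)\to\Z\to 0$. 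Since $H^2_G(\Co)=H^2(BG_I)\cong\on{Hom}(\pi_1(G_I),\Z)$ is torsion-free, so is $H^3_G(\Phi)$.

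Second, I would verify saturation. Under the compatible splittings of $H^3_G(\Phi)$ and $H^3_G(\Psi)=H^2_G(G/T)\oplus\Z=\Lambda^*\oplus\Z$, the inclusion $H^3_G(\Phi)\hookrightarrow H^3_G(\Psi)$ from Section~\ref{subsec:GT} acts as the identity on the $\Z$-summand and as $\pi^*\colon H^2_G(\Co)\to\Lambda^*$ on the first summand. Using $\pi_1(G_I)=\Lambda/\Lambda_I$ (with $\Lambda_I$ the coroot lattice of $G_I$), this identifies $\pi^*$ as the tautological inclusion $\{\mu\in\Lambda^*:\mu|_{\Lambda_I}=0\}\hookrightarrow\Lambda^*$. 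Its cokernel embeds, via restriction to $\Lambda_I$, into the free abelian group $\Lambda_I^*$, hence is torsion-free, establishing saturation.

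Combining these ingredients with the preceding lemma then proves the proposition: $k[(\om,\eta_G)]$ is integral in $H^3_G(\Phi)$ precisely when its image $(B^\flat(k\xi),k)\in\t^*\oplus\R$ lies in $\Lambda^*\oplus\Z$, i.e.\ when $(B^\flat(k\xi),k)\in\Lambda^*\times\Z$. The main obstacle is the saturation step: one needs to pin down the image of $\pi^*$ explicitly enough to see that its cokernel is free, so that integrality in $H^3_G(\Psi)$ transfers back to integrality in $H^3_G(\Phi)$. After that, the result is essentially a corollary of the preceding lemma.
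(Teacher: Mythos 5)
Your proposal takes the same overall route as the paper (chase the commutative square relating $H^3_G(\Phi)$, $H^3_G(\Psi)=\Lambda^*\oplus\Z$, and their real versions, and use the preceding lemma to identify the image of $k[(\om,\eta_G)]$ as $(B^\flat(k\xi),k)$), but you try to supply the missing "Cartesian" ingredient via a saturation argument — and that argument has a genuine gap.

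The problem is the claim that the boundary map $H^3_G(G)=\Z\to H^3_G(\Co)$ is ``necessarily zero'' because $H^3_G(\Co)=H^3(BG_I)$ is pure torsion. That is a non sequitur: a homomorphism from $\Z$ to a torsion group certainly can be nonzero. In fact the image of the generator under this map is exactly $\on{DD}_{G_I}\bigl(\A|_{\exp\xi}\bigr)$, i.e.\ the class of the central extension $\wh{G}_I$ in $H^3_{G_I}(\pt)$, and the paper's remarks after the construction of $\wh{G}_I$ explicitly warn that these extensions are sometimes non-trivial. When the order $d$ of this class exceeds $1$, the exact sequence you wrote collapses not to $0\to H^2_G(\Co)\to H^3_G(\Phi)\to\Z\to 0$ but to $0\to H^2_G(\Co)\to H^3_G(\Phi)\to d\Z\to 0$. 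There is then no ``compatible splitting'' of $H^3_G(\Phi)$ matching that of $H^3_G(\Psi)=\Lambda^*\oplus\Z$, and the snake-lemma cokernel picks up an extension $0\to\coker(\pi^*)\to\coker\to\Z/d\to 0$. Your computation only handles the $\coker(\pi^*)$ piece; the potential $\Z/d$-torsion is precisely what you would need to rule out to conclude saturation — and ruling it out is essentially equivalent to proving the proposition, so one cannot simply assume it.

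What actually supplies the ``if'' direction in the paper is the constructive material that follows: when $B^\flat(k\xi)\in\Lambda^*$, the next lemma produces an explicit trivialization of $\wh{G}_I^{(k)}$, hence a $G$-equivariant Morita trivialization of $\Phi^*\A^k$, and the proposition after that computes its relative Dixmier--Douady class to be exactly $(B^\flat(k\xi),k)$. That explicit lift settles the question independently of any saturation claim on the abstract cokernel. (Note also that the paper's own ``since all maps ... are injective'' is a terse way of stating the conclusion; injectivity alone gives only the ``only if'' direction, and the reader is meant to fall back on the explicit construction for the converse.) If you want an abstract argument, you would have to prove that $H^3_G(\Phi)\hookrightarrow\Lambda^*\oplus\Z$ is saturated, which requires showing the extension $0\to\coker(\pi^*)\to\coker\to\Z/d\to 0$ is torsion-free; that is a real additional step, not a formality.
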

\begin{proof}
According to the Lemma, $k\,[(\omega,\eta_G)]$ maps to
$(B^\flat(k\xi),k)\in \t^*\times\R$. Since all maps in the commutative diagram
\[ \begin{CD} H^3_G(\Phi) @>>> \Lambda^*\oplus \Z\\
@VVV @VVV\\
H^3_G(\Phi,\R) @>>> \t^*\oplus \R\\
\end{CD} \]
are injective, it follows that $k\,[(\omega,\eta_G)]$ is integral 
if and only if $(B^\flat(k\xi),k)\in \Lambda^*\times\Z$.
\end{proof}

Geometrically, a level $k$ pre-quantization is given by a
$G$-equivariant Morita trivialization of $\Phi^*\A^k$. This can
be seen explicitly, as follows. 
%For any central extension $\wh{K}$ of
%a Lie group $K$ by $\U(1)$, let $\wh{K}^{(l)}$ be its $l$-th power,
%defines as a quotient of $\wh{K}\times\U(1)$ by the equivalence
%relation, $(\wh{k},w)\sim (\wh{k}z^{-1},z^l w)$.
%
\begin{lemma}
Let $\xi\in\Alc_I$, and suppose that $B^\flat(k\xi)
  \in\Lambda^*$. Then the $k$-th power of the central extension of
  $G_I$ admits a unique trivialization $G_I\to \wh{G}_I^{(k)}$
  extending the map
\begin{equation}\label{eq:rrr}
 T\to \wh{T}^{(k)}=T\times\U(1),\ h\mapsto
(h,h^{B^\flat(k\xi)}).\end{equation}
\end{lemma}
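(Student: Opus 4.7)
The plan is to leverage the explicit description of $\wh G_I$ from the previous lemma, namely
\[ \wh{G}_I \cong \wt{G}_I \times_{\pi_1(G_I)} \U(1), \]
where $\pi_1(G_I) = \Lambda/\Lambda_I$ acts on $\U(1)$ via the character $\varrho_{t,I}$ associated to $t=\exp\xi$. Since taking the $k$-th power of a central extension raises the defining character to the $k$-th power, $\wh{G}_I^{(k)}$ admits the same description with $\pi_1(G_I)$ acting via $\varrho_{t,I}^k\colon \lambda\mapsto e^{\tpi B(k\xi,\lambda)}$.

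The key point is that the hypothesis $B^\flat(k\xi)\in\Lambda^*$ is precisely the condition $B(k\xi,\lambda)\in\Z$ for all $\lambda\in\Lambda$, which renders $\varrho_{t,I}^k$ the trivial character of $\pi_1(G_I)$. The $\U(1)$-factor in the quotient is therefore acted on trivially, and one obtains a canonical isomorphism $\wh{G}_I^{(k)} \cong G_I\times \U(1)$ by sending $[(\wt g, z)]$ to $(\pi(\wt g), z)$, where $\pi\colon \wt G_I\to G_I$ is the covering projection. This supplies the trivialization $s\colon G_I\to \wh{G}_I^{(k)},\ g\mapsto (g,1)$.

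To identify $s|_T$ with the prescribed formula, I would raise the inclusion \eqref{eq:standard} to the $k$-th power. Composing with the canonical isomorphism above expresses $\wh{T}^{(k)} \hookrightarrow \wh{G}_I^{(k)} \cong G_I\times\U(1)$ as $(h,z)\mapsto (h,\,h^{-B^\flat(k\xi)}z)$ on $T\times \U(1)$, with $\wh{T}^{(k)}$ trivialized via \eqref{eq:triviall}. Demanding that this take the value $s(h)=(h,1)$ then forces $z=h^{B^\flat(k\xi)}$, as asserted.

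For uniqueness, any two trivializations $s,s'$ differ by a continuous group homomorphism $\phi\colon G_I\to \U(1)$ with $s'(g)=\phi(g)s(g)$; agreement on $T$ forces $\phi|_T=1$, and since every element of the compact connected group $G_I$ is conjugate into $T$ while $\phi$ is automatically conjugation invariant as a character, $\phi\equiv 1$. The substantive step is really the first one, establishing triviality of $\varrho_{t,I}^k$; the remaining difficulty is the sign bookkeeping in the third paragraph, ensuring that the canonical isomorphism restricts through \eqref{eq:standard} to precisely $h\mapsto (h,h^{B^\flat(k\xi)})$ rather than some dualized or shifted variant.
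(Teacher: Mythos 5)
Your proposal is correct and follows essentially the same route as the paper: identify $\wh{G}_I^{(k)}$ with $\wt{G}_I\times_{\pi_1(G_I)}\U(1)$ via the $k$-th power of $\varrho_{t,I}$, observe that $B^\flat(k\xi)\in\Lambda^*$ makes this character trivial to get the splitting $G_I\times\U(1)$, and check via \eqref{eq:standard} that the induced map on $T$ is \eqref{eq:rrr}. Your uniqueness argument spells out what the paper states tersely (a trivialization is determined by its restriction to $T$), and is correct.
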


%Here $\wh{T}^{(k)}$ is embedded in $\wh{G}^{(k)}$ by the $k$-th power
%of the given embedding $\wh{T}\hra \wh{G}_I$.
\begin{proof}
  Uniqueness is clear, since a trivialization $G_I\to \wh{G}_I^{(k)}$
  is uniquely determined by its restriction to $T$.  For existence,
  recall that $\xi\in\Alc_I$ determines an identification
  $\wh{G}_I\cong \wh{G}_{I,(t)}=\wt{G}_I\times_{\pi_1(G_I)}\U(1)$,
  where $t=\exp\xi$, and using the homomorphism $\varrho_{t,I}\colon
  \pi_1(G_I)=\Lambda/\Lambda_I\to \U(1),\ \lambda\mapsto
  t^{B^\flat(\lambda)}$. The powers $\wh{G}^{(l)}$ are obtained
  similarly, using the $l$-th powers of the homomorphism
  $\varrho_{t,I}$. Since $B^\flat(k\xi)$ is a weight, we have
  $(\varrho_{t,I})^k=1$. This defines a trivialization,
\[ \wh{G}_I^{(k)}\cong \wh{G}_{I,(t)}=G_I\times\U(1).\]
According to \eqref{eq:standard}, this isomorphism intertwines the
standard inclusion of $\wh{T}^{(k)}\to \wh{G}_I^{(k)}$ with the map
\[ \wh{T}=T\times\U(1)\to  G_I\times\U(1),\ 
(h,z)\mapsto (h,h^{-B^\flat(k\xi)}z).\] 
The composition of this map with \eqref{eq:rrr} is $h\mapsto (h,1)$,
as required.
\end{proof}

Let $\Phi\colon \Co\hra G$ be the conjugacy class of $t=\exp\xi$, and
let $I$ be the unique index set such that $\xi$ lies in the relative
interior of $\Alc_I$.  If $\Co$ is pre-quantizable at level $k$, so
that $B^\flat(k\xi)\in\Lambda^*$, the Lemma defines a trivialization
of $G_I^{(k)}$. Hence, its action on $\H^k$ descends to an action of
$G_I$, and the Hilbert bundle $\E=G\times_{G_I}\H^k$ defines a Morita
trivialization of $\Phi^*\A^k$.

\begin{proposition}
The relative Dixmier-Douady class $\on{DD}_G(\A,\E,\psi)\in H^3_G(\Phi)$ is an
integral lift of the class $k[(\om,\eta_G)]\in H^3_G(G,\Co,\R)$.
\end{proposition}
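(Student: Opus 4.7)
The plan is to exploit the injection $H^3_G(\Phi)\hra H^3_G(\Psi)=\Lambda^*\oplus\Z$ established in Subsection \ref{subsec:GT}, together with its real-coefficient analogue $H^3_G(\Phi,\R)\hra H^3_G(\Psi,\R)=\t^*\oplus\R$. Both injections are induced by pullback along $\pi\colon G/T\to\Co$ and are compatible with the change-of-coefficients map $H^3_G(\Phi)\to H^3_G(\Phi,\R)$ and the natural inclusion $\Lambda^*\oplus\Z\hra \t^*\oplus\R$. Since the preceding Lemma identifies the image of $k[(\om,\eta_G)]$ in $\t^*\oplus\R$ as $(B^\flat(k\xi),k)$, it suffices to show that the pull-back of $\on{DD}_G(\A,\E,\psi)$ to $H^3_G(\Psi)$ equals $(B^\flat(k\xi),k)\in\Lambda^*\oplus\Z$.

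First I would compute $\pi^*(\E,\psi)$ explicitly. Since $\Co\cong G/G_I$ and $\pi\colon G/T\to G/G_I$ is the natural projection, $\pi^*\E=G\times_T \H^k$, where $T$ acts on $\H^k$ by composing the inclusion $T\hra G_I$ with the trivialization $G_I\xra{\sim}\wh{G}_I^{(k)}$ from the preceding Lemma and the given $\wh{G}_I^{(k)}$-action on $\H^k$. By formula \eqref{eq:rrr}, the trivialization restricted to $T$ is $h\mapsto(h,h^{B^\flat(k\xi)})\in\wh{T}^{(k)}=T\times\U(1)$, so this $T$-action on $\H^k$ differs from the one coming from the base-point section $h\mapsto(h,1)$ by the character $h\mapsto h^{B^\flat(k\xi)}$.

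Second, I would invoke the classification recorded around \eqref{eq:mor1}: the point $(\mu,l)\in\Lambda^*\oplus\Z=H^3_G(\Psi)$ is realized by the Morita trivialization $G\times_T(\H^l\otimes\C_\mu)$ of $\Psi^*\A^l$. The preceding computation identifies $\pi^*\E$ with $G\times_T(\H^k\otimes\C_{B^\flat(k\xi)})$, giving the desired value $(B^\flat(k\xi),k)$ and completing the proof.

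The principal obstacle is the character bookkeeping in the first step: correctly comparing the $T$-action on $\H^k$ induced by the Lemma's trivialization of $\wh{G}_I^{(k)}$ with the standard projective action of $T$ coming from $\wh{T}\hra\wh{G}_I$, keeping careful track of how the central $\U(1)$ acts on $\H^k=\H^{\otimes k}$ and of the sign conventions implicit in the classification \eqref{eq:mor1}. Once this matching is pinned down, the reduction of the claim to an identity in $\Lambda^*\oplus\Z$ is direct.
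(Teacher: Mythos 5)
Your proposal follows the paper's own proof exactly: reduce via the injection $H^3_G(\Phi)\hra H^3_G(\Psi)=\Lambda^*\oplus\Z$ (and its real analogue), then show $\pi^*\E=G\times_T(\H^k\otimes\C_{B^\flat(k\xi)})$ and read off $(B^\flat(k\xi),k)$ from the classification \eqref{eq:mor1}. The only difference is that you spell out the character bookkeeping which the paper leaves implicit — and your bookkeeping is correct.
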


\begin{proof}
We have to show that the image of $\on{DD}_G(\A,\E,\psi)$ in 
$H^3_G(\Psi)=\Lambda^*\oplus \Z$ is $(B^\flat(k\xi),k)$. But this 
follows from the discussion in the last Section, since the pull-back of $\E$ to $G/T$ is 
\[ \pi^*\E=G\times_T (\H^k\otimes \C_{B^\flat(k\xi)}).\]
\end{proof}

\subsection{The $\cox$-th power of the Dixmier-Douady bundle}\label{subsec:coxpower}
For any coadjoint orbit $\O\subset \g^*$, the compatible complex
structure defines a $G$-invariant $\Spin_c$-structure, i.e. Morita
trivialization of $\Cl(T\O)$.  We show show that similarly, for all
conjugacy classes $\Co\subset G$, there is a distinguished Morita
isomorphism between $\Cl(T\Co)$ and $\A^{\cox}|_\Co$, where $\cox$ is
the dual Coxeter number. That is, conjugacy classes carry a canonical
`twisted $\Spin_c$-structure'. There are examples of conjugacy classes
that do not admit invariant $\Spin_c$-structures, let alone invariant
complex structures.

We will need some additional notation.  Let
$\mf{S}_0=\{\alpha_1,\ldots,\alpha_l\}$, $l=\on{rank}(G)$, be a set of
simple roots for $\g$, relative to our choice of fundamental Weyl
chamber. We denote by $\alpha_0=-\alpha_{\on{max}}$ minus the highest
root, and let
\[\mf{S}=\mf{S}_0\cup\{\alpha_0\}=\{\alpha_0,\ldots,\alpha_l\}.\] 
Thus  $\Alc\subset\t_+$ is the $l$-simplex cut out by the inequalities
$\l\alpha_i,\cdot\r+\delta_{i,0}\ge 0$ for $i=0,\ldots,l$, 
and $\t_+$ is cut out by the hyperplanes with $i>0$.  The roots of
$G_I$ are those roots $\alpha$ of $G$ for which $\l\alpha,\xi\r\in\Z$,
and a set of simple roots is
\[ \mf{S}_I=\{\alpha_i\in\mf{S}|\ i\not\in I\}\]
That is, the Dynkin diagram of $G_I$ is obtained from the extended
Dynkin diagram of $G$ by removing the vertices labeled by $i\in I$.
Let $\rho$ be the half-sum of positive roots of $G$,
$\rho^\sharp=B^\sharp(\rho)$, and let
\[ \cox=1+\l\alpha_{\on{max}},\rho^\sharp\r\]
be the dual Coxeter number. 
\begin{theorem}
  For any conjugacy class $\Phi\colon \Co\hra G$, there is a
  distinguished $G$-equivariant Morita isomorphism $\Phi^* \A^\cox\simeq \Cl(T\Co)$.
\end{theorem}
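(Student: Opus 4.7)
\medskip

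\noindent\textbf{Proof plan.} The plan is to reduce the statement to a fibrewise computation and construct the Morita isomorphism via a spinor-type module. Pick a basepoint $t=\exp\xi$ in $\Co$ with $\xi$ in the interior of a face $\Alc_I$, so that $\Co\cong G/G_I$. Both bundles then take the form of induced bundles,
\[
\Phi^*\A^\cox \;=\; G\times_{G_I}\K(\H^{\otimes\cox}), \qquad \Cl(T\Co) \;=\; G\times_{G_I}\Cl(\g/\g_I),
\]
so the desired $G$-equivariant Morita isomorphism is equivalent to a $G_I$-equivariant Morita isomorphism $\K(\H^{\otimes\cox}) \simeq \Cl(\g/\g_I)$ of the fibres over $t$.

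The choice of $\xi$ equips $\g/\g_I$ with a distinguished $G_I$-invariant complex structure, by decomposing the complexification into positive and negative root spaces: $(\g/\g_I)^\C = \mathfrak{n}_I\oplus\overline{\mathfrak{n}_I}$, with $\mathfrak{n}_I=\bigoplus_{\alpha\in R^+\setminus R(G_I)}\g_\alpha$. This realises $\Cl(\g/\g_I)\cong\End(\sS_I)$ as a $G_I$-equivariant algebra, where $\sS_I=\Lambda\mathfrak{n}_I$ is a genuine $G_I$-representation. A $G_I$-equivariant Morita isomorphism between $\K(\H^{\otimes\cox})$ and $\Cl(\g/\g_I)\cong\K(\sS_I)$ is then the same thing as a $G_I$-equivariant unitary between $\H^{\otimes\cox}$ and $\sS_I\otimes\C_\mu$ for some character $\mu$ of $G_I$, or equivalently, a splitting $G_I\to\wh{G}_I^{(\cox)}$ that identifies the central extension defined by the $\wh{G}_I^{(\cox)}$-action on $\H^{\otimes\cox}$ with the trivial one carried by $\sS_I$.

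The heart of the argument is therefore to establish the existence of such a splitting --- equivalently, to prove that $\wh{G}_I^{(\cox)}$ is a trivial central extension of $G_I$ (with a specific trivialization related to the spinor module). The natural candidate is the character $\mu\in\Lambda_I^*$ with $\mu\equiv\cox\,B^\flat(\xi)\pmod{\Lambda^*}$, adjusted by the half-sum $\rho_I^\perp$ of positive roots not in $R(G_I)$ to match the top $T$-weight $2\rho_I^\perp$ of $\sS_I$. Checking that $\cox\,B^\flat(\xi)$ lies in $\Lambda_I^*$ (rather than only in $\t^*$) for $\xi$ in the interior of $\Alc_I$ is a short computation using $\langle\alpha,\xi\rangle\in\Z$ for $\alpha\in R(G_I)$ together with $2/B(\alpha,\alpha)\in\Z$. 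The key nontrivial ingredient is the matching of this $T$-level trivialization with the spinor data; this is the finite-dimensional shadow of the Freed--Hopkins--Teleman level shift, and it rests on the defining identity $\cox=1+\langle\alpha_{\max},\rho^\sharp\rangle$ for the dual Coxeter number.

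Once the trivialization is in hand, the Hilbert space $\W=\sS_I\otimes\H^{\otimes\cox}$, acted on by $G_I$ via the splitting together with the natural $\Cl(\g/\g_I)$-action, provides the fibrewise Morita isomorphism, and the induced bundle $G\times_{G_I}\W$ over $\Co$ realises the desired $G$-equivariant Morita isomorphism $\Phi^*\A^\cox\simeq\Cl(T\Co)$. To see that the construction is independent of the face and globalises cleanly, one invokes the coherence of the $\wh{G}_I$ under inclusions $\wh{G}_I\hra\wh{G}_J$ for $J\subset I$, already established in the lemma defining the $\wh{G}_I$. The main obstacle is the central-extension matching in the previous paragraph; everything else is either formal or a direct verification.
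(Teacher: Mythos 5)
The plan of reducing to a $G_I$-equivariant Morita trivialization of $\K(\H^{\cox})\otimes\Cl(\g_I^\perp)$ is right, and matches the paper's reduction to Lemma \ref{lem:cox}. But the central claim you build on---that the root-space decomposition gives a $G_I$-invariant complex structure on $\g/\g_I$ so that $\sS_I=\Lambda\mathfrak{n}_I$ is a \emph{genuine} $G_I$-representation splitting $\Cl(\g/\g_I)$---is false in general, and this sinks the argument.

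The subspace $\mathfrak{n}_I=\bigoplus_{\alpha\in R^+\setminus R(G_I)}\g_\alpha$ is a $T$-submodule but is typically not $\g_I$-stable. Take $\xi$ in the interior of $\Alc_I$; then $\alpha\in R^+\setminus R(G_I)$ satisfies $\l\alpha,\xi\r\in(0,1)$, while $\beta\in R(G_I)$ has $\l\beta,\xi\r\in\Z$. If $\alpha+\beta$ is a root, $\l\alpha+\beta,\xi\r$ lies in $(0,1)+\Z$ intersected with $(-1,1)$, which is $(-1,0)\cup(0,1)$; it can perfectly well fall in $(-1,0)$, making $\alpha+\beta$ a \emph{negative} root. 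Concretely, for $G=\Spin(5)$ and $I=\{2\}$ (vertex $\xi=\hh(e_1+e_2)$), $R(G_I)=\{\pm(e_1^*-e_2^*),\pm(e_1^*+e_2^*)\}$ and $R^+\setminus R(G_I)=\{e_1^*,e_2^*\}$, but $[\g_{-(e_1^*+e_2^*)},\g_{e_1^*}]\subset\g_{-e_2^*}\not\subset\mathfrak{n}_I$. One can also check directly that $W_I$ acts transitively on $\{\pm e_1^*,\pm e_2^*\}$, so no $W_I$-invariant choice of ``positive half'' exists---there is no $G_I$-invariant complex structure on $\g_I^\perp$ at all. This is exactly the phenomenon the theorem is designed to deal with (and the paper flags it explicitly: conjugacy classes need not admit invariant $\Spin_c$-structures, let alone invariant complex structures). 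If your claim were true, the central extension of $G_I$ defined by $\Cl(\g_I^\perp)$ would be trivial, $\wh G_I^{(\cox)}$ would split, and the whole $\cox$-shift would be a non-issue; it is not.

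The step in the paper that you label as the ``key nontrivial ingredient'' but leave as a gesture toward the identity $\cox=1+\l\alpha_{\max},\rho^\sharp\r$ is in fact the whole content. The paper identifies the $G_I$-central extension arising from $\Cl(\g_I^\perp)$ (the pull-back of $\Spin_c(\g_I^\perp)$ along $G_I\to\SO(\g_I^\perp)$) with $\wh G_I^{(\cox)}$, and the mechanism is that the element $\nu_I^\sharp=B^\sharp\bigl((\rho-\rho_I)/\cox\bigr)$ lies in the \emph{interior} of the face $\Alc_I$---that is what makes the defining homomorphism $\pi_1(G_I)\to\U(1)$, $\lambda\mapsto e^{\tpi\l\rho-\rho_I,\lambda\r}$, equal to $(\varrho_{t,I})^{\cox}$ at the specific point $t=\exp\nu_I^\sharp$. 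The spinor module is then taken not as $\Lambda\mathfrak{n}_I$ with a genuine $G_I$-action (which does not exist) but as a $\Spin_c(\g_I^\perp)$-module $\sS_I=\Hom_{\Cl(\g_I\cap\t^\perp)}(\sS^I,\sS)$, and $\H^{\cox}\otimes\sS_I$ is a genuine $G_I$-module only because the two \emph{nontrivial} central extensions $\wh G_I^{(-\cox)}$ (from $\K(\H^{\cox})$) and $\wh G_I^{(\cox)}$ (from $\Cl(\g_I^\perp)$) cancel. Your approach of trivializing each tensor factor separately cannot work because neither factor is separately trivializable. Two smaller issues, subordinate to this one: the proposed character $\mu$ depends on an arbitrary interior point $\xi$ and is not well-defined by the congruence ``$\mu\equiv\cox B^\flat(\xi)\pmod{\Lambda^*}$''; and ``$G_I$-equivariant unitary between $\H^{\otimes\cox}$ and $\sS_I\otimes\C_\mu$'' cannot hold literally since the former is infinite-dimensional and the latter finite-dimensional (a Morita isomorphism of $\K(\H_1)$ and $\K(\H_2)$ is implemented by a Hilbert bimodule, not a unitary).
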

\begin{proof}
We have to construct a Morita trivialization $(\E,\psi)$ of 
$\Phi^*\A^\cox\otimes \Cl(T\Co)$ (recall that Clifford algebras
satisfy $\Cl(V)\cong \Cl(V)^{\on{opp}}$). 
  Let $\xi\in\Alc\cong G/\Ad(G)$ be the point of the alcove
  corresponding to $\Co$, and $I$ the index set such that $\xi$ lies
  in the interior of $\Alc_I$. By construction,
  $\Phi^*\A^{\cox}=G\times_{G_I}\K(\H^{\cox})$, while
  $\Cl(T\Co)=G\times_{G_I}\Cl(\g_I^\perp)$ where $\g_I^\perp$ is the
  orthogonal complement of $\g_I$ in $\g$. Hence 
\[ \Phi^*\A^\cox\otimes \Cl(T\Co)=G\times_{G_I}(\K(\H^\cox)\otimes \Cl(\g_I^\perp)),\]
and the Theorem is reduced to the following Lemma.
\end{proof} 
\begin{lemma}\label{lem:cox}
For each $I$ there is a canonical $G_I$-equivariant Morita trivialization
\[\C\simeq \K(\H^\cox)\otimes \Cl(\g_I^\perp).\] 
\end{lemma}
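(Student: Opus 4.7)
The lemma is equivalent to producing a genuine $G_I$-Hilbert space $\E$ together with a $G_I$-equivariant isomorphism $\K(\E)\cong \K(\H^\cox)\otimes \Cl(\g_I^\perp)$ (handling the $\Z_2$-grading in the usual way; for $\dim\g_I^\perp$ odd one first tensors with $\Cl(\R)$ to reduce to the even case). The natural candidate is
$$\E \,=\, \H^\cox\otimes \sS,$$
where $\sS$ is a spinor module for $\Cl(\g_I^\perp)$. With this choice the algebra identification $\K(\E)\cong\K(\H^\cox)\otimes \K(\sS)\cong\K(\H^\cox)\otimes \Cl(\g_I^\perp)$ is automatic, so the content is reduced to constructing a genuine (as opposed to merely projective) $G_I$-action on $\E$.

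A priori each factor carries only a projective $G_I$-action: on $\H^\cox$ from the $\wh{G}_I^{(\cox)}$-representation with central $\U(1)$ acting of weight $-\cox$ (projective class $-\cox\,[\wh{G}_I]\in H^3_{G_I}(\pt)$), and on $\sS$ from $G_I\to \SO(\g_I^\perp)$ lifted to spinors (with some projective class $[\wh{G}_I^{\sS}]$). The diagonal action on $\E$ descends to a genuine $G_I$-action precisely when the two projective classes cancel, which is the central identity
$$[\wh{G}_I^{\sS}]\,=\,\cox\,[\wh{G}_I] \qquad \text{in } H^3_{G_I}(\pt).$$

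I would verify this identity by restricting to the maximal torus $T\subset G_I$, where both sides become explicit bilinear forms on $\t$: $[\wh{G}_I^{\sS}]$ corresponds to the form built from $\sum_{\alpha\in R^\perp}\alpha\otimes \alpha$ (with $R^\perp$ the roots of $\g$ not in $\g_I$), while $\cox\,[\wh{G}_I]$ corresponds to $\cox\cdot B|_\t$ by the construction of $\wh{G}_I$ in Section~\ref{subsec:trv}. The required equality then reduces to the classical identity
$$\sum_{\alpha\in R(\g)}\alpha\otimes\alpha \,=\, 2\cox\cdot B|_\t,$$
which characterizes the dual Coxeter number, after subtracting the analogous contribution of the roots of $\g_I$ and matching normalizations on each simple factor. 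The promotion from $T$ to all of $G_I$ proceeds by $W_I$-equivariance together with the inductive compatibility of the inclusions $\wh{T}\hookrightarrow \wh{G}_I$ built into the construction of the family of central extensions.

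The main obstacle will be the careful tracking of the shifts and sign conventions — in particular the relation of the level of $\wh{G}_I$ to $B|_{\t_I}$, the $\rho_I^\perp$-shift inherent in the spinor weights of $\g_I^\perp$, and the $\xi$-dependent trivialization~\eqref{eq:standard} of $\wh{T}^{(\cox)}$. This is the finite-dimensional shadow of the well-known affine level shift $k\mapsto k+\cox$, and Lemma~\ref{lem:cox} is precisely the mechanism by which this shift manifests itself in the twisted equivariant $K$-theory framework.
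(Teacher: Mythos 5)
Your proof takes essentially the same route as the paper: construct $\E=\H^\cox\otimes\sS_I$ with $\sS_I$ a spinor module for $\Cl(\g_I^\perp)$, and reduce the lemma to the identity of central extensions $\wh{G}_I^{\sS}=\wh{G}_I^{(\cox)}$, after which the diagonal $\wh{G}_I$-action on $\E$ has trivial central weight $-\cox+\cox=0$ and hence descends to $G_I$. The paper establishes that identity by observing that $\wh{G}_I^{\sS}$ is the pull-back of $\Spin_c(\g_I^\perp)$ along $G_I\to\SO(\g_I^\perp)$, computing the resulting homomorphism $\pi_1(G_I)=\Lambda/\Lambda_I\to\U(1)$ as $\lambda\mapsto e^{\tpi\l\rho-\rho_I,\lambda\r}$ (Sternberg), and matching it with $(\varrho_{t,I})^\cox$ for $t=\exp\nu_I^\sharp$ using the definition $\nu_I=\frac{1}{\cox}(\rho-\rho_I)$ together with the fact that $\nu_I^\sharp$ lies in the interior of $\Alc_I$. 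Your root-sum identity $\sum_{\alpha\in R(\g)}\alpha\otimes\alpha=2\cox\,B|_\t$ is an equivalent way to package the same numerology.

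One small correction of framing: the comparison of "projective classes" is not naturally a comparison of bilinear forms in $H^3_{G_I}(\pt)$, which is a torsion group. What one actually compares are the two homomorphisms $\pi_1(G_I)=\Lambda/\Lambda_I\to\U(1)$ defining the respective central extensions; the bilinear-form picture is the description of the \emph{family} of extensions over $T$ from Section \ref{subsec:trv}, not of a single extension. Restricting to $T$ is therefore not quite enough — you also need to know that the base point $\exp\nu_I^\sharp$ lands in $\exp(\Alc_I)$ so that $(\varrho_{t,I})^\cox$ at that point actually computes $\wh{G}_I^{(\cox)}$; this is the role of the claim $\nu_I^\sharp\in\on{int}(\Alc_I)$ which the paper cites from \cite{me:can} and which you should not elide. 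With that added, the argument closes.
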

\begin{proof}
  The central extension of $\SO(\g_I^\perp)$ defined by its
  homomorphism to $\on{Aut}(\Cl(\g_I^\perp))$ is by definition the
  group $\Spin_c(\g_I^\perp)$. Hence, the central extension of $G_I$
  defined by its action on $\Cl(\g_I^\perp)$ equals the pull-back of
  $\Spin_c(\g_I^\perp)$ under the homomorphism $G_I\to
  \SO(\g_I^\perp)$. That is, it is of the form
\[ \wt{G}_I\times_{\pi_1(G_I)}\U(1)\]
where $\wt{G}_I$ is the universal covering group, and the homomorphism
$\pi_1(G_I)\to \U(1)$ is defined by the commutative diagram, 
\[\begin{CD}
1 @>>> {\pi_1(G_I)} @>>> {\wt{G}_I} @>>> {G_I} @>>> 1  \\
@. @VVV @VVV @VVV @.\\
1 @>>> \U(1) @>>> {\Spin_c(\g_I^\perp)} @>>> {\on{SO}(\g_I^\perp)} @>>> 1  
\end{CD}
\]
Let $\Lambda_I$ be the co-root lattice of $G_I$, so that
$\pi_1(G_I)=\Lambda/\Lambda_I$.  By a direct calculation (cf.
Sternberg \cite[Section 9.2]{st:lie}), the homomorphism $\pi_1(G_I)\to
\U(1)$ is
\begin{equation}\label{eq:newhom}
 \pi_1(G_I)=\Lambda/\Lambda_I\to \U(1),\ \lambda\mapsto e^{\tpi
 \l\rho-\rho_I,\lambda\r}
\end{equation}
where $\rho$ is the half-sum of positive roots of $G$, and $\rho_I$
is the half-sum of positive roots of $G_I$, relative to the given
system $\mf{S}_I$ of simple roots. Let
\begin{equation}\label{eq:nuI}
\nu_I=\f{1}{\cox}(\rho-\rho_I),\ \ \ \nu_I^\sharp=B^\sharp(\nu_I).
\end{equation}
The element $\nu_I^\sharp$ is contained in the the interior of
the face $\Alc_I$ (see e.g. \cite{me:can}). 
%(In fact, the element $B^\sharp(\rho)/\cox$ lies in
%the interior of $\Alc$, and $\nu_I^\sharp$ is the orthogonal projection of this
%element onto $\Alc_I$.)  
Hence, the homomorphism \eqref{eq:newhom} is
just the $\cox$-th power of the homomorphism $\varrho_{t,I},\ t=\exp\nu_I^\sharp$ in the
definition of $\wh{G}_{I,(t)}\cong \wh{G}_I$. That is, we have a
pull-back 
diagram 
\[\begin{CD} \wh{G}_I^{(\cox)} & @>>> &\Spin_c(\g_I^\perp)\\
@VVV & & @ VVV \\
G_I & @>>> & \on{SO}(\g_I^\perp)
\end{CD}\]
There is an explicit spinor module $\sS_I$ for $\Cl(\g_I^\perp)$, constructed
as follows. Let $\n_+\subset \g^\C$ and $\n_{I,+}\subset\g_I^\C$
be the sum of root spaces for positive roots of $G$ and $G_I$,
respectively. (Here positivity is defined by the respective sets 
$\mathfrak{S}_0,\mathfrak{S}_I$ of simple roots.) Since 
projection to the real part identifies $\n_+\cong \t^\perp$ (as real
vector spaces), $\sS=\wedge\n_+$ is a spinor module for
$\Cl(\t^\perp)$. Similarly $\sS^I=\wedge\n_{I,+}$  is a spinor module for 
$\Cl(\g_I\cap \t^\perp)$. The spinor module $\sS_I$ over $\Cl(\g_I^\perp)$ is
\begin{equation}\label{eq:spinormod}
 \sS_I=\on{Hom}_{\Cl(\g_I\cap \t^\perp)}(\sS^I,\sS).\end{equation}
The Clifford action on $\sS_I$ restricts to a unitary representation
of $\Spin_c(\g_I^\perp)\subset \Cl(\g_I^\perp)$ where the central
circle acts with weight $1$. The $\K(\H^\cox)\otimes\Cl(\g_I)$-module
$\ca{H}^{ \cox}\otimes \sS_I$ with the resulting action of $G_I$ gives
the desired Morita trivialization. 
\end{proof}
The spinor module \eqref{eq:spinormod} is $T$-equivariant. This has
the following consequence: 
\begin{proposition}\label{prop:pull1}
Let $\Co$ be the conjugacy class of $\exp\xi,\ \xi\in \Alc$. The pull-back 
of $\Cl(T\Co)$ under the projection map 
\[ \pi\colon G/T\to \Co,\ gT\mapsto \Ad_g(\exp(\xi)).\]
admits a canonical $G$-equivariant Morita trivialization
\begin{equation}\label{eq:conn} 
\C\simeq \pi^*\Cl(T\Co).\end{equation}
\end{proposition}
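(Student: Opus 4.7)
The plan is to exhibit the Morita trivialization by constructing a $T$-equivariant spinor module for $\Cl(\g_I^\perp)$, recycling the spinor module $\sS_I$ from the proof of Lemma~\ref{lem:cox}.

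Writing $\Co=G/G_I$, where $I$ is the index set with $\xi$ in the interior of $\Alc_I$, the tangent bundle decomposes as $T\Co\cong G\times_{G_I}\g_I^\perp$, so
\[
\pi^*\Cl(T\Co)\;\cong\; G\times_T \Cl(\g_I^\perp),
\]
with $T\subset G_I$ acting on $\g_I^\perp$ through the restricted adjoint action.  By descent to $G/T$, a $G$-equivariant Morita trivialization of this bundle is the same data as a $T$-equivariant Hilbert space $\sS$ together with an isomorphism $\K(\sS)\cong \Cl(\g_I^\perp)$ of $T$-$C^*$-algebras, i.e.\ a $T$-equivariant spinor module for $\Cl(\g_I^\perp)$.

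Such a module is provided by $\sS_I=\Hom_{\Cl(\g_I\cap\t^\perp)}(\sS^I,\sS)$ from \eqref{eq:spinormod}.  The crucial observation, already implicit in that construction, is that $\sS=\wedge\n_+$ and $\sS^I=\wedge\n_{I,+}$ are \emph{honestly} (not projectively) $T$-equivariant: indeed $\n_+$ and $\n_{I,+}$ are sums of root spaces on which $T$ acts through its integer weights, so their exterior algebras are ordinary unitary $T$-representations.  This exhibits $\sS$ and $\sS^I$ as $T$-equivariant spinor modules for $\Cl(\t^\perp)$ and $\Cl(\g_I\cap\t^\perp)$ respectively, whence $\sS_I$ inherits a canonical $T$-equivariant structure commuting with the Clifford action of $\Cl(\g_I^\perp)$.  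The associated Hilbert bundle $\E=G\times_T \sS_I$ then yields the desired Morita trivialization
\[
\K(\E)\;\xra{\cong}\;G\times_T\Cl(\g_I^\perp)\;=\;\pi^*\Cl(T\Co).
\]

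The one point I would take care to justify is the canonicity of $\sS_I$: the construction depends, at face value, on the choice of positive roots used to pick out $\n_+$ and $\n_{I,+}$, but this choice is part of the standing conventions fixed in Section~\ref{subsec:coxpower}, and hence does not represent an additional input.  I do not expect a substantive obstacle here: the content of the proposition is really the remark that the $\cox$-th power central extension needed in Lemma~\ref{lem:cox} to globalize the $G_I$-action on $\sS_I$ disappears upon restriction to $T$, because $T$ acts on each root space by an honest character.
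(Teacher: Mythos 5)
Your argument is correct and follows the paper's proof essentially verbatim: reduce to a $T$-equivariant Morita trivialization of $\Cl(\g_I^\perp)$ via $\pi^*\Cl(T\Co)\cong G\times_T\Cl(\g_I^\perp)$, and then invoke the spinor module $\sS_I$ from Lemma~\ref{lem:cox}. Your additional remarks explicating \emph{why} $\sS_I$ is honestly $T$-equivariant (namely that $\wedge\n_+$ and $\wedge\n_{I,+}$ are ordinary $T$-representations since $T$ acts on root spaces by integral weights) usefully unpack the one-line observation in the paper preceding the proposition.
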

\begin{proof}
  Let $I$ be the index set such that $G_I$ is the stabilizer of
  $\exp\xi$. We have $\pi^*\Cl(T\Co)=\Cl(\pi^*T\Co)=G\times_T
  \Cl(\g_I^\perp)$. Hence we need a $T$-equivariant Morita
  trivialization of $\Cl(\g_I^\perp)$, and  this is provided by
  $\sS_I$. 
\end{proof}
If the conjugacy class $\Co$ is pre-quantized at level $k$, the Morita
equivalences $\Cl(T\Co)\simeq \Phi^*\A^\cox$ and $\C\simeq \Phi^*\A^{k}$,
combine to a Morita equivalence
\begin{equation}\label{eq:prequantized}
 \Cl(T\Co)\simeq \Phi^* \A^{k+\cox}\end{equation}
Recall $\Psi=\Phi\circ \pi\colon G/T\to G$.  The composition of the
Morita equivalences \eqref{eq:conn} and $\Cl(T\Co)\simeq
\Phi^*\A^{\cox}$ is the Morita trivialization $\C\simeq
\Psi^*\A^{\cox}$ defined by the bundle $G\times_T\H^\cox$.  It is thus
labeled by $(0,\cox)\in \Lambda^*\oplus \Z$.  Hence, in the
pre-quantized case, the composition of \eqref{eq:conn} and
\eqref{eq:prequantized} is the Morita trivialization of
$\Psi^*\A^{k+\cox}$ parametrized by $(B^\flat(k\xi),k+\cox)\in
\Lambda^*\oplus \Z$.

\subsection{Quantization of conjugacy classes}
The twisted equivariant $K$-homology group
\[ K_\bullet^G(G,\A^{k+\cox})\]
carries a ring structure, with product given by the cross-product for
$G\times G$, followed by push-forward under group multiplication
$\Mult\colon G\times G\to G$. Indeed, since
$\on{Mult}^*x=\pr_1^*x+\pr_2^*x$ for all $x\in H^3_G(G,\Z)$, the
pull-back bundle $\Mult^*\A^{k+\cox}$ over $G\times G$ is (stably)
isomorphic to $\pr_1^*\A^{k+\cox}\otimes\pr_2^*\A^{k+\cox}$. The
choice of a stable isomorphism defines a push-forward map
\[ K_\bullet^G(\on{Mult})\colon 
K_\bullet^G(G,\A^{k+\cox})\otimes K_\bullet^G(G,\A^{k+\cox}) \to
K_\bullet^G(G,\A^{k+\cox}),
\]
where we identify $ K_\bullet^G(G,\A^{k+\cox})\otimes
K_\bullet^G(G,\A^{k+\cox})\cong K_\bullet^G(G\times
G,\pr_1^*\A^{k+\cox}\otimes\pr_2^*\A^{k+\cox})$.  Since $H^2_G(G\times
G)=0$ this map in $K$-homology is independent of the choice of stable
isomorphism, and by a similar reasoning the resulting product is
commutative and associative. (For non-simply connected groups
$G$, the existence of ring structures on the twisted K-homology is a much more subtle matter \cite{tu:rin}.)

The inclusion $\iota\colon \{e\}\hra G$
of the group unit induces a ring homomorphism
\begin{equation}\label{eq:quotientmap}
 K_\bullet^G(\iota)\colon R(G)=K_\bullet^G(\pt)\to 
K_\bullet^G(G,\A^{k+\cox}).\end{equation}
\begin{theorem}[Freed-Hopkins-Teleman] \label{th:fht}
For all non-negative integers $k\ge 0$ the ring homomorphism
  \eqref{eq:quotientmap} is onto, with kernel the level $k$ fusion
  ideal $I_k(G)\subset R(G)$. That is, $K_1^G(G,\A^{k+\cox})=0$, while
  $K_0^G(G,\A^{k+\cox})$ is canonically isomorphic to the level $k$
  fusion ring, $R_k(G)=R(G)/I_k(G)$. 
\end{theorem}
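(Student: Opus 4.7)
The plan is to build a spectral sequence converging to $K_\bullet^G(G,\A^{k+\cox})$ out of the alcove stratification of $G$, identify its $E_1$-page with the chain complex $(C_\bullet,\partial)$ from the introduction, and then show that this complex is a resolution of $R_k(G)$.

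First I would exploit the decomposition \eqref{eq:Gsim}, which writes $G$ as a union of $G$-invariant strata $Y_I = G\cdot\exp(\on{int}\Alc_I)$, one for each nonempty $I\subset\{0,\ldots,l\}$; the closure of $Y_I$ contains $Y_J$ iff $J\supset I$. Filtering $G$ by $F_p = \bigcup_{|I|\geq l+1-p}\overline{Y_I}$ and applying the long exact sequence in excision (Section \ref{subsec:twi}(c)) yields a spectral sequence
\[
E_1^{p,q} \;=\; \bigoplus_{|I|=l+1-p} K_{p+q}^G\bigl(Y_I,\,\A^{k+\cox}|_{Y_I}\bigr)\;\Longrightarrow\; K_{p+q}^G(G,\A^{k+\cox}).
\]
Each $Y_I$ is $G$-equivariantly diffeomorphic to $G/G_I\times\on{int}(\Alc_I)$, and by the construction of $\A$ in \eqref{eq:Asim} together with the induction isomorphism (Section \ref{subsec:twi}(b)), the corresponding $K$-group is $K_{p+q}^{G_I}\bigl(\on{int}(\Alc_I),\Cl(\g/\g_I)\otimes\K(\H^{k+\cox})\bigr)$. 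Using the open-cell collapse, the Morita equivalence $\Cl(\g/\g_I)\otimes\K(\H^\cox)\simeq\C$ of Lemma \ref{lem:cox} (which absorbs the $\cox$-shift), and property (a) of Section \ref{subsec:twi}, this reduces to $R(\wh{G}_I)_k$ sitting in a single total degree. Thus $E_1^{p,0}=C_{l-p}$ and $E_1^{p,q}=0$ for $q\neq 0$.

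Next I would identify the $d_1$ differential with the alternating sum of the holomorphic induction maps $\on{ind}^{\wh{G}_J}_{\wh{G}_I}$ for pairs $J\subset I$ with $|I|=|J|+1$. The boundary in the excision sequence between consecutive filtration strata is precisely the induction map of Section \ref{subsec:twi}(c) applied to the inclusion $G_I\hookrightarrow G_J$, and the $G_I$-invariant complex structure on $\g_J/\g_I$ (coming from the positive roots determined by $\mathfrak{S}_I$) provides the required stable trivialization of $\Cl(\g_J/\g_I)$, turning induction into holomorphic induction. This gives the equality $(E_1,d_1)=(C_\bullet,\partial)$.

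The main obstacle is then to prove that $(C_\bullet,\partial)$ is a resolution of $R_k(G)$ in degree zero, i.e.\ the exactness of \eqref{eq:resolution}. The natural approach is Weyl-Kac: level $k$ weights of $\wh{G}_I$ are elements of $\Lambda^*$ whose $W_I$-orbit representative lies in a shifted alcove, and the alternating sum of holomorphic inductions realizes the Weyl-Kac alternation. Concretely, the image of $\epsilon$ contains the classes of all level-$k$ irreducibles (from regular orbits), while the image of $\partial$ is generated by the Weyl-anti-invariant combinations that define the fusion ideal $I_k(G)$; this is the chain-complex presentation of $R_k(G)$ noted in the introduction, and can be verified by the $\SU(2)$ Mayer-Vietoris model of \cite{fr:tw} combined with a parabolic-induction reduction. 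Once exactness is established, the spectral sequence collapses at $E_2$ (since $E_2^{p,q}=0$ except for $(p,q)=(l,0)$), there are no extension problems because $R_k(G)$ is free abelian, and so $K_0^G(G,\A^{k+\cox})\cong R_k(G)$ and $K_1^G(G,\A^{k+\cox})=0$ as $R(G)$-modules. Finally, the ring identification follows since $\epsilon$ agrees (via the $|I|=1$ summand corresponding to the vertex $\{0\}$, i.e.\ the identity element) with the map $K_0^G(\iota)$ of \eqref{eq:quotientmap}, and $K_0^G(\iota)$ is a ring homomorphism with respect to the $\Mult$-induced product on $K_0^G(G,\A^{k+\cox})$.
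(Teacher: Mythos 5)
Your high-level strategy — build a spectral sequence from the alcove stratification of $G$, identify the $E^1$-page with the chain complex $(C_\bullet,\partial)$, prove the complex is a resolution of $R_k(G)$, and then collapse — is the same as the paper's, which packages the stratification via the language of co-simplicial Dixmier-Douady bundles (citing Segal) rather than via your explicit filtration. That packaging difference is cosmetic; both produce $E^1_{p,q}=\bigoplus_{|I|=p+1}K^G_q(G/G_I,\A_I^{k+\cox})$ with $d^1$ the alternating sum of push-forwards. Your identification of those push-forwards with holomorphic induction via a complex structure on $\g_J/\g_I$ is exactly Proposition \ref{prop:induction} of the paper.

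The gap is in your final step, the exactness of $(C_\bullet,\partial)$. What you argue there only bears on $H_0$: you sketch why $\epsilon\colon C_0\to R_k(G)$ is onto and why the image of $\partial\colon C_1\to C_0$ sits in the fusion ideal. That is not enough. To get $K_1^G=0$ and to have the spectral sequence degenerate without higher differentials or extension problems you need $H_p(C_\bullet)=0$ for every $0<p\le l$, and nothing in the appeal to ``the $\SU(2)$ Mayer-Vietoris model combined with a parabolic-induction reduction'' produces that. For $\SU(2)$ there is only $C_0$ and $C_1$ so there is nothing to check in higher degree, and passing to higher rank via parabolic induction is not a standard reduction here: the complex $(C_\bullet,\partial)$ for $G$ is indexed by subsets of the affine Dynkin diagram and its higher homology is a genuinely new question, not inherited from rank-one subgroups. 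This is exactly where the paper has to work: after reducing to the sub-complexes $C_\bullet(J)$ (Theorem \ref{th:combinatorial}), it constructs a $\Z$-filtration by a length function on the affine Weyl orbit and explicit chain homotopies $h_i$ to show that a product of chain maps $A=A_0\cdots A_l$ strictly lowers the filtration in degrees $0<p<l$, which forces $H_p=0$; $H_l=0$ is a separate finiteness argument. None of this is visible from the $\SU(2)$ picture. Until you supply an argument for the vanishing of $H_p$ for $p>0$, the statement that $E^2$ is concentrated in one spot is unjustified, and the theorem does not follow.

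A smaller point worth fixing: your stratification bookkeeping is inverted. For $Y_I=G\cdot\exp(\on{int}\Alc_I)$ the closure $\overline{Y_I}$ contains $Y_J$ precisely when $J\subset I$ (the lower-dimensional faces $\Alc_J$ are contained in $\Alc_I$), not when $J\supset I$ as you write; consequently $F_0$ in your filtration is the open regular stratum rather than a closed set, and the resulting $E_1$-degrees do not line up with $(C_\bullet,\partial)$ the way you claim ($E^{p,q}_1$ is concentrated where $q\equiv l\ (\mathrm{mod}\ 2)$, not at $q=0$). This is repairable by filtering by closed skeleta $F_p=\mathrm{image}\bigl(\coprod_{|I|\le p+1}G/G_I\times\Alc_I\bigr)$, which is what the co-simplicial formalism is implicitly doing, and then the Bott shift from the open cell of dimension $p$ cancels the filtration index so that $E^1_{p,q}=\bigoplus_{|I|=p+1}K^G_q(G/G_I,\A_I^{k+\cox})$ as in the paper.
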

We will explain a proof of this Theorem in Section \ref{sec:spec}
below.
\begin{remark}
  It is also very interesting to consider the non-equivariant twisted
  $K$-homology rings $K_\bullet(G,\A^{k+\cox})$.  These are studied
  are in the work of V. Braun \cite{br:tw} and C. Douglas
  \cite{dou:twi}.
\end{remark}
The ring $R_k(G)$ may be defined as the ring of level $k$
projective representations of the loop group $LG$, or in
finite-dimensional terms (cf. \cite{al:fi}): Let  
\[ \Lambda^*_k=\Lambda^*\cap B^\flat(k\Alc)\] 
be the set of \emph{level $k$ weights}. 
Identify $R(G)$ with ring of characters of $G$. Then
$R_k(G)=R(G)/I_k(G)$, where $I_k(G)$ is the vanishing ideal of the set
of elements $\{t_\nu\in T,\ \nu\in\Lambda^*_k\}$ where
\[ %\textstyle
t_\nu=\f{1}{k+\cox}B^\sharp(\nu+\rho).\]
It turns out that as an additive group, $R_k(G)$ is freely generated
by the images of irreducible characters $\chi_\mu$ for $\mu\in
\Lambda^*_k$.  Thus $R_k(G)=\Z[\Lambda^*_k]$ additively.
\begin{remark}
If $G$ has type $ADE$ (so that all roots have equal length), the lattice
$B^\sharp(\Lambda^*)\subset\t$ is identified with the set of elements
$\xi\in \t$ with $\exp\xi\in Z(G)$, the center of $G$.  Hence the
ideal $I_k(G)$ may be characterized, in this case, as the vanishing
ideal of the set of all $g\in G^{\on{reg}}$ such that $g^{k+\cox}\in
Z(G)$.
\end{remark}
\begin{remark}
  Freed-Hopkins-Teleman compute twisted $K$-homology groups of $G$ for
  arbitrary compact groups, not necessarily simply connected. The case
  of simple, simply connected groups considered here is considerably
  easier than the general case.
\end{remark}
Suppose $\Phi\colon \Co\hra G$ is the conjugacy class of $\exp\xi,\
\xi\in\Alc$, pre-quantized at level $k\ge 0$. Thus
$\mu:=B^\flat(k\xi)$ is a weight. The Morita equivalence
\eqref{eq:prequantized} defines a push-forward map in K-homology,
\begin{equation}\label{eq:pushfor}
 K_0^G(\Phi)\colon K_0^G(\Co,\Cl(T\Co))\to 
K_0^G(G,\A^{k+\cox})\end{equation}
where $\Phi\colon \Co\hra G$ is the inclusion. 

\begin{theorem}
The push-forward map \eqref{eq:pushfor} takes the fundamental class
$[\Co]\in K_0^G(\Co,\Cl(T\Co))$ to the equivalence class of the
character $\chi_\mu$ in $R_k(G)=R(G)/I_k(G)$.  
\end{theorem}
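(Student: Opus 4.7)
Set $\mu := B^\flat(k\xi) \in \Lambda^*$ and let $\pi \colon G/T \to \Co$, $gT \mapsto \Ad_g(\exp\xi)$, so that $\Psi := \Phi\circ\pi$ is the map from Section~\ref{subsec:GT}. The strategy is to compute $\Phi_*[\Co]$ by deforming along the $G$-equivariant homotopy
\[
\Psi_t \colon G/T \lra G, \qquad gT \mapsto \exp(t\,\Ad_g\xi), \quad t\in[0,1],
\]
connecting $\Psi_1 = \Psi$ to the constant map $\Psi_0 \equiv e$, and evaluating the push-forward at both endpoints.

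\emph{Step 1 (homotopy of Morita trivializations).} Arguing as in Section~\ref{subsec:GT}, the relative group $H^3_G(\Psi_t) \cong \Lambda^*\oplus\Z$ is constant along the homotopy. The Morita trivialization of $\Psi^*\A^{k+\cox}$ with label $(\mu,k+\cox)$ obtained in Section~\ref{subsec:coxpower} by composing \eqref{eq:conn} with \eqref{eq:prequantized} extends to a continuous one-parameter family of Morita trivializations of $\Psi_t^*\A^{k+\cox}$ with the same label. Homotopy invariance of K-homology push-forward then yields
\[
(\Psi_0)_* \;=\; (\Psi_1)_* \colon K_0^G\bigl(G/T,\,\Cl(T(G/T))\bigr) \lra K_0^G(G,\A^{k+\cox}).
\]

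\emph{Step 2 (push-forward at $t=1$).} Factor $(\Psi_1)_* = \Phi_*\circ\pi_*$. The map $\pi$ is a $G$-equivariant fiber bundle with fiber $G_I/T$, and the splitting $T(G/T) = \pi^*T\Co \oplus T_\pi$ together with the complex structure on $T_\pi = G\times_T(\g_I/\t)$ induced by $\n_{I,+}$ (cf.\ Proposition~\ref{prop:pull1}) provides a canonical Morita equivalence $\Cl(T(G/T)) \simeq \pi^*\Cl(T\Co)$ refining the $\Spin_c$-structure on $G/T$. The induced push-forward $\pi_*[G/T]$ is computed fiberwise by the equivariant index of the Dirac operator on $G_I/T$ with trivial twist, which by Borel-Weil-Bott for $G_I$ is the trivial $G_I$-representation. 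Hence
\[
\pi_*[G/T] = [\Co] \in K_0^G(\Co,\Cl(T\Co)), \qquad \text{so}\qquad (\Psi_1)_*[G/T] = \Phi_*[\Co].
\]

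\emph{Step 3 (push-forward at $t=0$).} Factor $\Psi_0 = \iota\circ p$ with $p\colon G/T\to\pt$. At $t=0$ the Morita trivialization of the constant bundle $\Psi_0^*\A^{k+\cox}$ labeled $(\mu,k+\cox)$ is realized by $G\times_T(\H^{k+\cox}\otimes\C_\mu)$; it differs from the canonical trivialization $G\times_T\H^{k+\cox}$ (label $(0,k+\cox)$) by the equivariant line bundle $L_\mu = G\times_T\C_\mu$ on $G/T$. After applying stability to absorb $\H^{k+\cox}$, the push-forward $p_*[G/T]$ with the $(\mu,k+\cox)$-trivialization becomes the equivariant $\Spin_c$-index on $G/T$ of the Dirac operator twisted by $L_\mu$. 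Since $\mu \in B^\flat(k\Alc) \subset \t^*_+$ is dominant, Borel-Weil-Bott for $G$ identifies this index with the irreducible character $\chi_\mu\in R(G)$, so $(\Psi_0)_*[G/T] = \iota_*\chi_\mu$.

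\emph{Step 4 (conclusion).} Combining Steps~1--3 with Theorem~\ref{th:fht}, which identifies $\iota_*\colon R(G)\to K_0^G(G,\A^{k+\cox})$ with the quotient $R(G)\twoheadrightarrow R_k(G)=R(G)/I_k(G)$, we conclude
\[
\Phi_*[\Co] \;=\; (\Psi_1)_*[G/T] \;=\; (\Psi_0)_*[G/T] \;=\; \iota_*\chi_\mu \;=\; [\chi_\mu] \in R_k(G).
\]
The main obstacle is Step~1: producing the continuous family of Morita trivializations of $\Psi_t^*\A^{k+\cox}$ with fixed topological label $(\mu,k+\cox)$, which is what allows homotopy invariance of K-homology push-forward to apply to an intrinsic class $[G/T]$ whose Clifford twist does not in an obvious way match $\Psi_t^*\A^{k+\cox}$ pointwise. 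Granted this interpolation, Steps~2 and~3 are essentially the Borel-Weil-Bott computations for $G_I/T$ and $G/T$ respectively.
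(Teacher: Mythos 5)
Your proof is correct and follows essentially the same route as the paper: both deform $\Psi=\Phi\circ\pi$ to the constant map, keeping track of the Morita trivialization via its relative class $(\mu,k+\cox)\in H^3_G(\Psi_t)\cong\Lambda^*\oplus\Z$, and then compare push-forwards along the two factorizations of the resulting homotopy-commutative square. The paper packages your Steps 2--3 as the two composites of that square (identifying the left vertical map as twist by $L_\mu=G\times_T\C_\mu$ followed by holomorphic induction, so that $[G/T]\mapsto\chi_\mu$), but the content is identical to yours.
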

\begin{proof}
  Let $\pi\colon G/T\to \Co$ and $\Psi=\Phi\circ \pi \colon G/T\to G$
  be as in Section \ref{subsec:GT}, and recall that $\Psi=\Psi_1$ is
  equivariantly homotopic to the constant map $\Psi_0$ onto $e\in G$.
  That is, the diagram
\[ \begin{CD}
G/T @>>{\pi}> \Co\\
@VVV @VV{\Phi}V\\
\pt @>>{\iota_e}> G,
\end{CD}\]
commutes up to a $G$-equivariant homotopy. As discussed at the end of Section
\ref{subsec:coxpower}, the Morita trivialization $\C\simeq
\pi^*\Cl(T\Co)$ from \eqref{eq:conn}  combines with
\eqref{eq:prequantized} to a Morita trivialization of
\[\Psi^*\A^{k+\cox}\cong \K(G\times_T(\C_{\mu}\otimes\H^{k+\cox})).\] 
It is related to the obvious trivialization $\Psi_0^*\A^{k+\cox}\cong
\K(G\times_T \H^{k+\cox})$ by a twist by the line bundle $G\times_T
\C_\mu$. We obtain a commutative diagram 
\[ \begin{CD}
K_0^G(G/T)=R(T) @>>> K_0^G(\Co,\Cl(T\Co))\\
@VVV @VVV\\
K_0^G(\pt)=R(G) @>>> K_0^G(G,\A^{k+\cox})=R_k(G),
\end{CD}\]
where the lower horizontal map is defined by the `obvious' 
trivialization $\iota_e^*\A^{k+\cox}\cong \K(\H^{k+\cox})$, and the 
left vertical map is twist by the line bundle $G\times_T\C_\mu$
(acting as an automorphism of $K_0^G(G/T)$, followed by the push-forward.

Using the Morita trivialization $\Cl(T(G/T))\cong \K(G\times_T\sS)$
(where $\sS=\wedge\n_+$) to identify
$K_0^G(G/T)=K_0^G(G/T,\Cl(T(G/T)))$, the left vertical map takes the
fundamental class $[G/T]$ to the irreducible character $\chi_\mu \in
R(G)=K_0^G(\pt)$. On the other hand, the upper horizontal map takes
$[G/T]$ to $[\Co]$, which then goes to $\Phi_*[\Co]$. We conclude that
$\Phi_*[\Co]\in R_k(G)$ is the image of $\chi_\mu$ under the quotient
map $R(G)\to R_k(G)$.
\end{proof}

\subsection{Twisted $K$-homology of the conjugacy classes}
Suppose $\Phi\colon \Co\hra G$ is an arbitrary conjugacy class (not
necessarily pre-quantized) corresponding to $\xi\in\Alc$. Let $I$ be
the index set such that $\xi$ is in the interior of $\Alc_I$, thus
$\Co=G/G_I$. The twisted $K$-homology group
$K_q^G(\Co,\Phi^*\A^{k+\cox})= K_q^G(G/G_I,\ \A_I^{k+\cox})$ is
computed as in Section \ref{subsec:twi}, using Lemma \ref{lem:cox}
\[ \begin{split}
K_q^G(G/G_I,\ \A_I^{k+\cox})
&=K_q^{G_I}(\pt,\ \K(\H)^{k+\cox}\otimes\Cl(\g_I^\perp))
\\&=K_q^{G_I}(\pt,\ \K(\H)^k).
\end{split}\]
The central extension of $G_I$ defined by its action on $\K(\H)^k$ is
$\wh{G}_I^{(-k)}$. Hence, the last group is isomorphic to the
Grothendieck group of $\wh{G}_I^{(-k)}$-representations where the
central circle acts with weight $-1$, or equivalently the Grothendieck
group $R(\wh{G}_I)_k$ of $\wh{G}_I$-representations where the central
circle acts with weight $k$. That is, $K_1^G(G/G_I,\
\A_I^{k+\cox})=0$, while 
\begin{equation}\label{eq:kconj}
 K_0^G(G/G_I,\ \A_I^{k+\cox})\cong R(\wh{G}_I)_k
\end{equation}
as $R(G)$-modules. (The module structure is given by the restriction
homomorphism $R(G)\to R(G_I)=R(\wh{G}_I)_0$, which acts on
$R(\wh{G}_I)$ by multiplication.) If $J\subset I$, we have a natural
map $\phi_I^J\colon G/G_I\to G/G_J$ covered by a map of Dixmier-Douady
bundles $\A_I\to \A_J$. Hence we obtain a push-forward map,
\begin{equation}\label{eq:push}
K_0^G(\phi_{I}^J)\colon K_0^G(G/G_I,\A_I^{k+\cox})\to
K_0^G(G/G_J,\A_J^{k+\cox})\end{equation}
On the other hand, the collections of simple roots
$\mf{S}_J\subset\mf{S}_I$ determine complex structures on the
homogeneous spaces $G_J/G_I=\wh{G}_J/\wh{G}_I$. This defines
holomorphic induction maps $\on{ind}_I^J\colon R(\wh{G}_I)\to
R(\wh{G}_J)$, restricting to $R(G)$-module homomorphisms
\begin{equation}\label{eq:holind} \on{ind}_I^J\colon R(\wh{G}_I)_k\to R(\wh{G}_J)_k.\end{equation}
From the discussion in Section \ref{subsec:twi}, we see:
\begin{proposition}\label{prop:induction}
The identifications $K_0^G(G/G_I,\A_I^{k+\cox})\cong R(\wh{G}_I)_k$
intertwine the push-forward maps \eqref{eq:push}
with the holomorphic induction maps \eqref{eq:holind}. 
\end{proposition}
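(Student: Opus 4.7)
The plan is to use the transitivity of induction (property (b) in Section \ref{subsec:twi}) to factor $\phi_I^J$ through the $G_J$-equivariant $K$-homology of the homogeneous space $G_J/G_I$, and then to recognize the resulting local push-forward as an instance of the induction map $\on{ind}_{G_I}^{G_J}$ from property (c). Concretely, the diffeomorphism $G/G_I\cong G\times_{G_J}(G_J/G_I)$ presents $\phi_I^J$ as induced from the $G_J$-equivariant projection $\pi\colon G_J/G_I\to \pt$, and similarly the morphism $\A_I\to \A_J$ is induced from the $G_J$-equivariant morphism $G_J\times_{G_I}\K(\H)\to \K(\H)$ determined by the lift $\wh{i}_I^J\colon \wh{G}_I\hookrightarrow \wh{G}_J$. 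Applying $\on{Ind}_{G_J}^G$ thus reduces the problem to showing that $K_0^{G_J}(\pi)$ is identified with $\on{ind}_I^J$ under \eqref{eq:kconj}.

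For the latter, I would apply $\on{Ind}_{G_I}^{G_J}$ to the source, identifying it with $K_0^{G_I}(\pt,\K(\H)^{k+\cox}\otimes \Cl(\g_J/\g_I))$. The orthogonal splitting $\g_I^\perp=\g_J^\perp\oplus (\g_J/\g_I)$ yields $\Cl(\g_I^\perp)\cong \Cl(\g_J^\perp)\otimes \Cl(\g_J/\g_I)$, and Lemma \ref{lem:cox} for $G_I$ and $G_J$ then absorbs the two Clifford factors $\Cl(\g_I^\perp)$, $\Cl(\g_J^\perp)$ into the $\cox$-th powers of $\H$ on source and target respectively. What remains is precisely the induction map
\[\on{ind}_{G_I}^{G_J}\colon K_0^{G_I}(\pt,\K(\H)^k\otimes \Cl(\g_J/\g_I))\longrightarrow K_0^{G_J}(\pt,\K(\H)^k)\]
of property (c), equipped with the stable trivialization of $\Cl(\g_J/\g_I)$ coming from the natural factorization $\sS_I\cong \sS_J\otimes \wedge(\n_{J,+}/\n_{I,+})$ of the spinor module \eqref{eq:spinormod}. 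Since $\wedge(\n_{J,+}/\n_{I,+})$ is the spinor module realizing the complex structure on $G_J/G_I=\wh{G}_J/\wh{G}_I$ dictated by the inclusion of simple roots, property (c) identifies this map with holomorphic induction $\on{ind}_I^J$.

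The main obstacle will be the consistent bookkeeping of the central extensions and their $\cox$-th powers. The Morita trivializations supplied by Lemma \ref{lem:cox} for $G_I$ and for $G_J$ are normalized at the distinct base points $\nu_I^\sharp\in\Alc_I$ and $\nu_J^\sharp\in\Alc_J$, with $\nu_I^\sharp-\nu_J^\sharp=\tfrac{1}{\cox}(\rho_J-\rho_I)$. This discrepancy is precisely the $(\rho_J-\rho_I)$-shift built into the definition of holomorphic induction, and verifying that the two normalizations match up exactly—so that no extra line-bundle twist appears in the identification $R(\wh{G}_I)_k\to R(\wh{G}_J)_k$—is where the argument requires the most care; the coherence $\wh{i}_I^K=\wh{i}_J^K\circ\wh{i}_I^J$ of the lifted inclusions and the compatibility \eqref{eq:standard} of the trivializations on $\wh{T}$ should make this bookkeeping go through cleanly.
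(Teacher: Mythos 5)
Your proof is correct and follows essentially the same route as the paper, which disposes of the statement in one line by citing the induction machinery of Section 2.3 (items (b) and (c)); you have simply spelled out the factorization through $G_J/G_I$, the splitting $\Cl(\g_I^\perp)\cong\Cl(\g_J^\perp)\otimes\Cl(\g_J/\g_I)$, and the matching of spinor modules that the paper leaves implicit. The $\rho$-shift bookkeeping you flag as delicate is in fact resolved in the paper's subsequent lemma identifying $R(\wh{G}_I)_k$ with $\Z[\Lambda^*]^{W_I-\on{as}}$ via $\chi_\mu^I\mapsto\Sk^I(\mu+\rho)$.
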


\section{Computation of   $K_\bullet^G(G,\A^{k+\cox})$}\label{sec:spec}
The Dixmier-Douady bundle $\A\to G$, as described in \eqref{eq:Asim},
may be viewed as the geometric realization of a co-simplicial
Dixmier-Douady bundle, with non-degenerate $p$-simplices the bundle
$\coprod_{|I|=p+1}\A_I$ over $\coprod_{|I|=p+1}G/G_I$. This defines a
spectral sequence computing the $K$-homology group
$K_\bullet^G(G,\A^{k+\cox})$, in terms of the known $K$-homology
groups $K_\bullet^G(G/G_I,\A_I^{k+\cox})=R(\wh{G}_I)_k$ and the
holomorphic induction maps between these groups.  As it turns out, the
spectral sequence collapses at the $E_2$-stage, and computes the level
$k$ fusion ring.

\subsection{The spectral sequence for $K_\bullet^G(G,\A^{k+\cox})$}
The construction \eqref{eq:Asim} of $\A\to G$ as a quotient of $\coprod_I
\A^I\times\Alc_I \to \coprod_I G/G_I\times\Alc_I$ may be thought of as
the geometric realization of a `co-simplicial Dixmier-Douady bundle'.
See \cite{se:cl} and \cite{mo:no} for background on co-simplicial
(semi-simplicial) techniques. Here the $G$-Dixmier-Douady bundles
\[ \coprod_{|I|=p+1}\A_I\to \coprod_{|I|=p+1} G/G_I\] 
are the \emph{non-degenerate} $p$-simplices; the full set of $p$-simplices is a union 
$\coprod_{f}\A_{f([p])}\to
  \coprod_{f} G/G_{f([p])}$
over all non-decreasing maps $f\colon
  [p]=\{0,\ldots,p\}\to \{0,\ldots,l\}$.
By the theory of co-simplicial spaces (see \cite[Section 5]{se:cl}),
one obtains a spectral sequence $E^1_{p,q}\Rightarrow
K_{p+q}^G(G,\A^{k+\cox})$ where
\begin{equation}\label{eq:spectral}
  E^1_{p,q}=\bigoplus_{|I|=p+1} K_q^G(G/G_I,\A_I^{k+\cox}).
\end{equation}
The differential $\d^1\colon E^1_{p,q}\to E^1_{p-1,q}$ is given on
$K_q^G(G/G_I,\A_I^{k+\cox})$ as an alternating sum,
\[ \d^1=\sum_{r=0}^p\,(-1)^r K_q^G(\phi_I^{\delta_rI}).\]
Here $\delta_r I$ is obtained from $I$ by omitting the $r$-th entry:
$\delta_r I=\{i_0,\ldots,\wh{i}_r,\ldots,i_p\}$ for
$I=\{i_0,\ldots,i_p\}$ with $i_0<\cdots <i_p$. Recall that 
$\phi_I^J\colon G/G_I\to G/G_J$ are the natural maps for $J\subset I$.

By mod 2 periodicity of the $K$-homology, we have
$E^1_{p,q}=E^1_{p,q+2}$. Since the groups $G_I$ are connected, and
since $\dim G/G_I$ is even, one has $K_1^G(G/G_I,\A_I^{k+\cox})=0$,
thus $E^1_{\bullet,1}=0$. Hence, the $E^1$-term is described by a
single chain complex $(C_\bullet,\partial)$, where
\[ C_p=E^1_{p,0},\ \ \  \partial=\d^1.\]
The map $R(G)\to K^G_\bullet(G,\A^{k+\cox})$ defined by the inclusion
$\iota\colon e\hra G$ may be also be described by the spectral
sequence.  Think of $\iota$ as the geometric realization of a map of
co-simplicial manifolds, given as the inclusion of
$\{e\}=G/G_{\{0\}}$ into $\coprod_{i=0}^l G/G_{\{i\}}$. The
co-simplicial map gives rises to a morphism of spectral sequences,
$\ti{E}^\bullet\to E^\bullet$, where
\[ \ti{E}^1_{p,q}=\begin{cases} K_q^G(\pt,\C)&\text{if $p=0$}\\
0&\text{otherwise}
\end{cases}
\] 
At the $E^1$-stage, this boils down to a chain map
\begin{equation}\label{eq:unit} R(G)\to C_\bullet\end{equation}
where $R(G)=\ti{E}^1_{0,0}$ carries the zero differential.  
Our goal is to show that the homology of $(C_\bullet,\partial)$ vanishes in positive degrees, while the induced map in homology $R(G)\to H_0(C,\partial)$ is onto, with kernel $I_k(G)$. 

\subsection{The induction maps in terms of weights}
To get started, we express the chain complex in terms of weights of
representations. Recall that $R(T)$ is isomorphic to the group ring
$\Z[\Lambda^*]$. The restriction map $R(G)\to R(T)$ is injective, and
identifies
\[ R(G)\cong \Z[\Lambda^*]^W.\]
%
%In order to describe holomorphic induction maps, 
%it is convenient to work with a different identification 
%
%\[ \Z[\Lambda^*]^{W-\on{as}},\]
%
%given by the restriction map $R(G)\to R(T)^W\subset R(T)$, followed by
%multiplication by the `Weyl denominator' $\sum_{w\in
%  W}(-1)^{\length(w)} t^{w\rho}$. Define the skew-symmetrization map
% 
%\[\on{Sk}\colon \Z[\Lambda^*]\to \Z[\Lambda^*]^{W-\on{as}},\ 
%\phi\mapsto \sum_{w\in W_I}(-1)^{\length(w)} w.\phi\] 
%
%Then holomorphic induction $\on{ind}_T^G\colon R(T)\to R(G)$ is described by the map
%$\Z[\Lambda^*]\to \Z[\Lambda^*]^{W-\on{as}},\ \mu\mapsto
%\on{Sk}(\mu+\rho)$.
%
Let us next describe $R(\wh{G}_I)_k$ in terms of weights.  Each
$\wh{G}_I$ has maximal torus $\wh{T}=T\times\U(1)$, hence the weight
lattice is
\[ \wh{\Lambda}^*=\Lambda^*\times\Z\subset \wh{\t}^*=\t^*\times\R.\] 
The simple roots for $\wh{G}_I$ are 
$(\alpha_i,0)$ with $\alpha_i\in \mf{S}_I$, the 
corresponding co-roots are 
\begin{equation}\label{eq:coroots}
(\alpha^\vee_i,\delta_{i,0})\in\wh{\t}=\t\times\R,\ \ \
 \alpha_i\in\mf{S}_I.
\end{equation}
These define a fundamental Weyl chamber 
\begin{equation}\label{eq:weylchamber}
 \wh{\t}^*_{I,+}=\{(\nu,s)|\ \l\nu,\alpha^\vee_i\r+s\delta_{i,0} \ge
0,\ \alpha_i\in\mf{S}_I\}\end{equation}
The elements $\nu_I$ satisfy
$\l\nu_I,\alpha^\vee_i\r+\delta_{i,0}=0$.  Hence, $(\nu,s)\in
\wh{\t}^*_{I,+}$ is and only if $\nu-s\nu_I\in \t^*_{I,+}$.
Let $\Lambda^*_{I,k}\subset \Lambda^*$
be the intersection of \eqref{eq:weylchamber} 
with $\Lambda^*\times\{k\}\cong \Lambda^*$. Thus
\[ \Lambda^*_{I,k}=\{\nu\in\Lambda^*|\  \l\nu,\alpha^\vee_i\r+k\delta_{i,0}\ge 0,\ i\not\in I\}\]
labels the irreducible $\wh{G}_I$-representations for which the
central circle acts with weight $k$. The Weyl group $W_I$ of $G_I$ is
also the Weyl group of $\wh{G}_I$. Its action on $\wh{\Lambda^*}$ 
preserves the levels $\Lambda^*\times\{k\}$, 
hence it takes the form $w.(\nu,k)=(w\bullet_k \nu,k)$ for a
\emph{level $k$-action} $\nu\mapsto w\bullet_k \nu$ on $\Lambda^*$. 
Explicitly, 
\begin{equation}\label{eq:shift}
 w\bullet_k \nu=w(\nu-k\nu_I)+k\nu_I.\end{equation}
Fix $k$, and denote by $\Z[\Lambda^*]^{W_I-\on{as}}$ the
anti-invariant part for the $W_I$-action $\nu\mapsto w\bullet_{k+\cox}\nu$ at the shifted level
$k+\cox$. Observe that this space is invariant under the action of $\Z[\Lambda^*]^W$. 
Let
\[\on{Sk}^I\colon \Z[\Lambda^*]\to \Z[\Lambda^*]^{W_I-\on{as}},\ \nu\mapsto 
\sum_{w\in W_I}(-1)^{\length(w)}w\bullet_{k+\cox}\nu \]
denote skew-symmetrization relative to the action at level $k+\cox$. 
For $\mu\in \Lambda^*_k$, let $\chi^I_\mu\in R(\wh{G}_I)_k$ be the
character of the irreducible $\wh{G}_I$-representation of weight
$(\mu,k)$.  
\begin{lemma}
The map $\chi^I_{\mu}\mapsto \on{Sk}^I(\mu+\rho)$
extends to an isomorphism 
\begin{equation}\label{eq:ide}
 R(\wh{G}_I)_k\to  \Z[\Lambda^*]^{W_I-\on{as}}.\end{equation}
Under this isomorphism, the $R(G)\cong \Z[\Lambda^*]^W$-module
structure is given by multiplication in the group ring. Furthermore,
the identification \eqref{eq:ide} intertwines the holomorphic
induction maps $\on{ind}_I^J\colon R(\wh{G}_I)_k\to R(\wh{G}_J)_k$ for
$J\subset I$ with skew-symmetrizations
\[\on{Sk}_I^J=\f{1}{|W_I|}\on{Sk}_J\colon \Z[\Lambda^*]^{W_I-\on{as}}\to 
\Z[\Lambda^*]^{W_J-\on{as}}.\]
\end{lemma}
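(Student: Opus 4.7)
The strategy is to derive the identification from Weyl's character formula for the finite-dimensional compact group $\wh{G}_I$ (with maximal torus $\wh{T}=T\times\U(1)$ and half-sum of positive roots $\rho_{\wh{G}_I}=(\rho_I,0)$), combined with two compatibility statements relating the linear $W_I$-action on $\wh{\t}^*$ to the shifted affine action $\bullet_s$ on $\Lambda^*$. First I would observe that the vector $(\nu_I,1)\in\wh{\t}^*$ is $W_I$-invariant: since $\nu_I^\sharp$ lies in the interior of $\Alc_I$ one has $\l\nu_I,\alpha_i^\vee\r+\delta_{i,0}=0$ for $\alpha_i\in\mf{S}_I$, and each such generator of $W_I$ fixes $(\nu_I,1)$. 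A direct check using the co-roots \eqref{eq:coroots} then shows that the linear $W_I$-action on the level-$s$ slice $\t^*\times\{s\}\subset\wh{\t}^*$ coincides with the affine action $\bullet_s$. Decomposing $(\mu+\rho,k+\cox)=(\mu+\rho_I,k)+(\rho-\rho_I,\cox)$ and using that $(\rho-\rho_I,\cox)=\cox\cdot(\nu_I,1)$ is $W_I$-invariant yields the key shift identity
\[
w\bullet_{k+\cox}(\mu+\rho) = w\bullet_k(\mu+\rho_I)+(\rho-\rho_I),\qquad w\in W_I.
\]

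Applying Weyl's character formula for $\wh{G}_I$ evaluated at $(t,z)\in\wh{T}$, one obtains
\[
\chi^I_\mu(t,z)\cdot z^{-k}\cdot D_{G_I}(t) = \sum_{w\in W_I}(-1)^{\length(w)} t^{w\bullet_k(\mu+\rho_I)},
\]
where $D_{G_I}(t)=\sum_{w\in W_I}(-1)^{\length(w)}t^{w\rho_I}$ is the ordinary Weyl denominator of $G_I$. The shift identity converts the right-hand side into $t^{-(\rho-\rho_I)}\on{Sk}^I(\mu+\rho)(t)$, giving
\[
\chi^I_\mu(t,z)\cdot z^{-k}\cdot D_{G_I}(t)\cdot t^{\rho-\rho_I} = \on{Sk}^I(\mu+\rho)(t).
\]
Specializing to $\mu=0$ shows $D_{G_I}(t)\cdot t^{\rho-\rho_I}=\on{Sk}^I(\rho)(t)$, so the map $\chi^I_\mu\mapsto\on{Sk}^I(\mu+\rho)$ is simply multiplication by the fixed element $\on{Sk}^I(\rho)$, which plays the role of a Weyl denominator in the shifted picture.

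That the map is an isomorphism $R(\wh{G}_I)_k\cong\Z[\Lambda^*]^{W_I-\on{as}}$ now follows by the standard Weyl argument: every $W_I$-anti-invariant element under the $\bullet_{k+\cox}$-action is uniquely a $\Z$-linear combination of the basis $\{\on{Sk}^I(\mu+\rho)\}$ indexed by $\bullet_{k+\cox}$-regular representatives, and these biject with $\Lambda^*_{I,k}$, which labels the irreducible $\wh{G}_I$-characters at level $k$. Compatibility with the $R(G)=\Z[\Lambda^*]^W$-module structures is automatic, since $R(G)$ acts by multiplication in the group ring on both sides and multiplication by a fixed element commutes with multiplication by $W$-invariants.

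For the induction statement, I would apply the Weyl character formula to $\wh{G}_J$ together with the Borel--Weil--Bott description of holomorphic induction to obtain the analogous identity for the induced virtual character,
\[
\on{ind}_I^J\chi^I_\mu(t,z)\cdot z^{-k}\cdot D_{G_J}(t)\cdot t^{\rho-\rho_J} = \on{Sk}^J(\mu+\rho)(t),
\]
so under the identification $\on{ind}_I^J$ corresponds to $\on{Sk}^I(\mu+\rho)\mapsto\on{Sk}^J(\mu+\rho)$. Since every element of $W_J$ factors uniquely as a product of a coset representative of $W_J/W_I$ and an element of $W_I$, skew-symmetrizing the $W_I$-anti-invariant $\on{Sk}^I(\mu+\rho)$ over all of $W_J$ counts each term $|W_I|$ times, giving $\on{Sk}_J(\on{Sk}^I(\mu+\rho))=|W_I|\on{Sk}^J(\mu+\rho)$, which is the claimed relation $\on{Sk}_I^J=\f{1}{|W_I|}\on{Sk}_J$. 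The main difficulty is the careful verification of how the linear $W_I$-action on $\wh{\t}^*$ restricts to each level slice, particularly tracking the central $\R$-component of the co-root $(\alpha_0^\vee,1)$ in the case $0\notin I$; once this is in hand, the argument reduces to bookkeeping with Weyl-character identities.
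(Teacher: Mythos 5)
Your argument is correct in its essentials and arrives at the same combinatorial heart as the paper's proof; the difference is mostly one of presentation. The paper proceeds directly with bases: it shows $\mu\mapsto\mu+\rho$ bijects $\Lambda^*_{I,k}$ with the regular set $\Lambda^{*,\mathrm{reg}}_{I,k+\cox}$ (deducible from $\langle\rho,\alpha_i^\vee\rangle+\cox\,\delta_{i,0}=1$), declares the module‐compatibility ``obvious,'' and handles the induction maps by invoking the Weyl--Bott formula for $\on{ind}_I^J$ on characters, then translating $w\bullet_k(\mu+\rho_J)-\rho_J=w\bullet_{k+\cox}(\mu+\rho)-\rho$ via the identity $\rho-\rho_J=\cox\,\nu_J$. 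You instead realize the identification concretely as ``restrict the character to $\wh T$, strip the central weight, and multiply by the fixed element $D_{G_I}(t)\,t^{\rho-\rho_I}$,'' which makes the $R(G)$-module compatibility transparent and lets you derive both the isomorphism and the induction compatibility from a single Weyl-character identity applied to $\wh G_I$ and $\wh G_J$. That is a legitimate and arguably more conceptual route; both proofs ultimately rest on the same two ingredients (the $\rho$-shift bijection on regular weights, and the behavior of $W_I\subset W_J$ under skew-symmetrization, where $\on{Sk}^J\circ\on{Sk}^I=|W_I|\,\on{Sk}^J$ by splitting $W_J$ into cosets of $W_I$).

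One small slip: the identity $D_{G_I}(t)\,t^{\rho-\rho_I}=\on{Sk}^I(\rho)(t)$ that you extract by specializing $\mu=0$ only holds when $0\in I$, for only then is $(0,k)$ a trivial weight of $\wh G_I$; when $\alpha_0\in\mf S_I$ the character $\chi^I_0$ is a nontrivial irreducible and $\chi^I_0(t,z)z^{-k}\neq1$, so the ``fixed element'' in your picture is $D_{G_I}(t)\,t^{\rho-\rho_I}$, not $\on{Sk}^I(\rho)$. This does not affect the rest: your key formula $\chi^I_\mu(t,z)\,z^{-k}\,D_{G_I}(t)\,t^{\rho-\rho_I}=\on{Sk}^I(\mu+\rho)(t)$ is valid for all $I$, the map is still restriction followed by multiplication by a fixed element of $\Z[\Lambda^*]$, and the $R(G)$-module compatibility and the induction statement go through unchanged. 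I would simply drop the sentence identifying that element with $\on{Sk}^I(\rho)$, or restrict it to the case $0\in I$.
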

Note that the statement involves a shift by $\rho$, rather than
$\rho_I$. Thus, even in the case $I=\{0,\ldots,l\}$ where $G_I=T$ and
$W_I=\{1\},\ \rho_I=0$, the identification $R(\wh{T})_k\to
\Z[\Lambda^*]$ involves a $\rho$-shift.
\begin{proof}
  Let $\Lambda^{*,\on{reg}}_{I,k+\cox}$ be the intersection of
  $\Lambda^*\times\{k+\cox\}$ with $\on{int}(\wh{\t}^*_{I,+})$. Since
  obviously $R(\wh{G}_I)_k=\Z[\Lambda^*_{I,k}]$, the first part
 of the Lemma amounts to the assertion that 
\[ \mu\in\Lambda^*_{I,k} \Leftrightarrow \mu+\rho \in
\Lambda^{*,\on{reg}}_{I,k+\cox}.\]
We have $\mu\in \Lambda^*_{I,k}$ if and only if 
$\l\mu,\alpha^\vee_i\r+k\delta_{i,0}\ge 0$ for $i\not\in I$.  Since
$\l\rho,\alpha^\vee_i\r+\cox\delta_{i,0}=1$ this is equivalent to
$\l\mu+\rho,\alpha^\vee_i\r+(k+\cox)\delta_{i,0}\ge 1,\ i\not\in I$,
i.e. $\mu+\rho\in {\Lambda}^{*,\on{reg}}_{I,k+\cox}$ as claimed.  The
assertion about the $R(G)$-module structure is obvious. Finally, for
$J\subset I$ the holomorphic induction map $\on{ind}_I^J$ is given by
\[ \on{ind}_I^J(\chi_{\mu}^I)= (-1)^{\length(w)}
\chi^J_{w\bullet_k(\mu+\rho_J)-\rho_J}\]
if there exists $w\in W_J$ with $w\bullet_k(\mu+\rho_J)-\rho_J\in
{\Lambda}^*_{J,k}$, while $\on{ind}_I^J(\chi_{\mu}^I)=0$ if there
is no such $w$.  
Using \eqref{eq:shift} together with
$\rho_I-k\nu_I=\rho-(k+\cox)\nu_I$ (by the definition of $\nu_I$), this may be
re-written in terms of the action at level $k+\cox$:
\[ w\bullet_k(\mu+\rho_J)-\rho_J=w\bullet_{k+\cox}(\mu+\rho)-\rho.\]
\end{proof}
By combining this discussion with Proposition \ref{prop:induction}, we
have established a commutative diagram
\begin{equation}\label{eq:commd}
 \begin{CD}
K_0^G(G/G_J,\A_J^{k+\cox}) @>>{\cong}> R(\wh{G}_J)_k @>>{\cong}>  \Z[\Lambda^*]^{W_J-\on{as}}\\
@AA{K_0(\phi_I^J)}A @AA{\on{ind}_I^J}A @AA{\on{Sk}_I^J}A\\
K_0^G(G/G_I,\A_I^{k+\cox}) @>>{\cong}> 
R(\wh{G}_I)_k @>>{\cong}>  \Z[\Lambda^*]^{W_I-\on{as}}\\
\end{CD}\end{equation}
We can thus re-express the
chain complex $(C_\bullet,\partial)$ in terms of weights:
\begin{equation}\label{eq:chain1}
 C_p=\bigoplus_{|I|=p+1} \Z[\Lambda^*]^{W_I-\on{as}},\ \ 
\partial \phi^I=\sum_{r=0}^p (-1)^r\on{Sk}_I^{\delta^r I}(\phi^I),\end{equation}
for $\phi^I\in \Z[\Lambda^*]^{W_I-\on{as}}$. The map $R(G)\to
C_0\subset C_\bullet$ given by \eqref{eq:unit} is expressed as the
inclusion of $\Z[\Lambda^*]^{W-\on{as}}$, as the summand corresponding
to $I=\{0\}$.  By construction, $C_\bullet$ is a complex of
$R(G)$-modules, and the map \eqref{eq:unit} is an $R(G)$-module
homomorphism.

\subsection{Fusion ring}
Let us also describe the fusion ring in terms 
of weights. The subset $B^\flat(k\Alc)\subset\t^*$ defining 
the set $\Lambda^*_k=\Lambda^*\cap B^\flat(k\Alc)$ of level $k$
weights is cut out by the
inequalities
\[ \l\nu,\alpha^\vee_i\r+k\delta_{i,0}\ge 0.\]
It is a fundamental domain for the level $k$ action $\nu\mapsto
w\bullet_k \nu$ of the affine Weyl group, generated by the simple
affine reflections
\[ \nu\mapsto \nu-(\l\nu,\alpha^\vee_i\r+s\delta_{i,0}) \alpha_i,\ \
i=0,\ldots,l.\]
This is consistent with our earlier notation: The level $k$ action of
$\Waff$ restricts to the level $k$ action of the subgroup $W_I$,
generated by the affine reflections with $i\not\in I$.  

Let $\Z[[\Lambda^*]]$ be the
$\Z[\Lambda^*]$-module consisting of all functions $\Lambda^*\to \Z$,
not necessarily of finite support. Let
\[ \on{Sk}_{\on{aff}}\colon \Z[\Lambda^*]\to
\Z[[\Lambda^*]]^{\Waff-\on{as}},\ \nu\mapsto \sum_{w\in\Waff} (-1)^{\length(w)}\,w\bullet_{k+\cox}\nu\]
be skew-symmetrization, using the action at the shifted level
$k+\cox$. The map $\mu\mapsto \on{Sk}_{\on{aff}}(\mu+\rho)$ extends to
an isomorphism, $\Z[\Lambda^*_k]\to \Z[[\Lambda^*]]^{\Waff-\on{as}}$.
This identifies
\begin{equation}\label{eq:rkg}
R_k(G)\cong \Z[[\Lambda^*]]^{\Waff-\on{as}}
\end{equation} 
as an abelian group. For any $I$ we have $R(G)=\Z[\Lambda^*]^W$-module homomorphisms $R(\wh{G}_I)_k\to
R_k(G)$,
\begin{equation}\label{eq:mor}
 \Z[\Lambda^*]^{W_I-\on{as}}\to \Z[[\Lambda^*]]^{\Waff-\on{as}},\ \ 
\ \phi_I\mapsto \f{1}{|W_I|}\on{Sk}_{\on{aff}}\phi_I.
\end{equation}
For $I=\{0\}$ we may use the obvious trivialization
$\wh{G}=G\times\U(1)$ to identify $R(G)=R(\wh{G}_0)_k$. The following is clear from the
description of the quotient map $R(G)\to R_k(G)$ (see e.g. \cite{al:fi}):
\begin{lemma}
  The identifications $R(G)=\Z[\Lambda^*]^{W-\on{as}}$ and
  \eqref{eq:rkg} intertwine the quotient map $R(G)\to R_k(G)$ with the
  skew-symmetrization map,
\begin{equation}\label{eq:affind}
\f{1}{|W|}\on{Sk}_{\on{aff}}\colon \Z[\Lambda]^{W-\on{as}}\to \Z[[\Lambda^*]]^{\Waff-\on{as}}.
\end{equation}
In particular, \eqref{eq:rkg} is an isomorphism of 
$R(G)\cong \Z[\Lambda^*]^W$-modules.
\end{lemma}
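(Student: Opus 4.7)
The plan is to verify the square at the level of basis elements and reduce everything to a single algebraic identity. By the preceding lemma applied with $I=\{0\}$ (so that $G_{\{0\}}=G$, $W_{\{0\}}=W$, $\nu_{\{0\}}=0$, and the level $k+\cox$ action of $W$ coincides with the ordinary action), the identification $R(G)\cong\Z[\Lambda^*]^{W-\on{as}}$ sends the irreducible character $\chi_\mu$ to the Weyl numerator
\[ \on{Sk}^{\{0\}}(\mu+\rho)=\sum_{w\in W}(-1)^{\length(w)}w(\mu+\rho), \]
and these numerators form a $\Z$-basis of $\Z[\Lambda^*]^{W-\on{as}}$ as $\mu$ ranges over $\Lambda^*_+$. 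The identification \eqref{eq:rkg} sends $[\chi_\mu]\in R_k(G)$, for $\mu\in\Lambda^*_k$, to $\on{Sk}_{\on{aff}}(\mu+\rho)$. Thus the lemma reduces to showing that for every $\mu\in\Lambda^*_+$, the image of $\chi_\mu$ under $R(G)\to R_k(G)$ corresponds to $\f{1}{|W|}\on{Sk}_{\on{aff}}(\on{Sk}^{\{0\}}(\mu+\rho))$.

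The key step is the identity
\[ \f{1}{|W|}\on{Sk}_{\on{aff}}\bigl(\on{Sk}^{\{0\}}(\mu+\rho)\bigr)=\on{Sk}_{\on{aff}}(\mu+\rho), \]
valid for all $\mu\in\Lambda^*$. This follows from the inclusion $W\subset\Waff$ together with the fact that $(-1)^{\length(\cdot)}$ is a character of $\Waff$: expanding the double sum and re-indexing by $w=w_1 w_2\in\Waff$ (with $w_1\in\Waff$, $w_2\in W$), each $w\in\Waff$ is hit exactly $|W|$ times with sign $(-1)^{\length(w_1)+\length(w_2)}=(-1)^{\length(w)}$, producing the prefactor $|W|$.

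It remains to match $\on{Sk}_{\on{aff}}(\mu+\rho)$ with $[\chi_\mu]\in R_k(G)$ for arbitrary $\mu\in\Lambda^*_+$. For $\mu\in\Lambda^*_k$ this is the definition of \eqref{eq:rkg}. Otherwise, either $\mu+\rho$ lies on a reflection hyperplane of the level $k+\cox$ action (so $\on{Sk}_{\on{aff}}(\mu+\rho)=0$, while Weyl's character formula gives $\chi_\mu(t_\nu)=0$ for every $\nu\in\Lambda^*_k$, placing $\chi_\mu$ in $I_k(G)$), or there is a unique $w\in\Waff$ carrying $\mu+\rho$ into the interior of the fundamental alcove at $\mu'+\rho$, in which case $\on{Sk}_{\on{aff}}(\mu+\rho)=(-1)^{\length(w)}\on{Sk}_{\on{aff}}(\mu'+\rho)$ and, by the same Weyl character evaluation at the $t_\nu$, $[\chi_\mu]=(-1)^{\length(w)}[\chi_{\mu'}]$ in $R_k(G)$. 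The $R(G)$-module compatibility is then automatic because skew-symmetrization is $\Z[\Lambda^*]^W$-linear.

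The main obstacle is purely notational: one must keep straight that in the $I=\{0\}$ case, $\nu_{\{0\}}$ vanishes and the level $k+\cox$ action of $W$ from \eqref{eq:shift} degenerates to the ordinary $W$-action, so that the finite and affine skew-symmetrizations can be composed on the nose. Once this is set up, the argument is the classical observation that iterated skew-symmetrization over the subgroup $W\subset\Waff$ multiplies by $|W|$.
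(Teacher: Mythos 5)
Your proof is correct, and it fills in details that the paper itself leaves to a reference: the paper asserts the lemma with the remark that it is ``clear from the description of the quotient map $R(G)\to R_k(G)$ (see e.g.\ \cite{al:fi})'' and gives no argument.

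Your reduction is the right one. The only thing worth stressing, which you correctly identify as the notational crux, is that the preceding lemma applied with $I=\{0\}$ really does specialize to the ordinary Weyl numerator: since $\nu_{\{0\}}=\f{1}{\cox}(\rho-\rho_{\{0\}})=0$, the level $k+\cox$ action of $W=W_{\{0\}}$ in \eqref{eq:shift} is linear, so
\[
\on{Sk}^{\{0\}}(\mu+\rho)=\sum_{w\in W}(-1)^{\length(w)}\,w\bullet_{k+\cox}(\mu+\rho)=\sum_{w\in W}(-1)^{\length(w)}\,w(\mu+\rho).
\]
The composition identity $\f{1}{|W|}\on{Sk}_{\on{aff}}\circ\on{Sk}^{\{0\}}=\on{Sk}_{\on{aff}}$ then follows by the re-indexing $w=w_1w_2$, $w_1\in\Waff$, $w_2\in W$, since $W\subset\Waff$ is a subgroup and $(-1)^{\length(\cdot)}$ is a homomorphism; each $w\in\Waff$ appears exactly $|W|$ times with the correct sign. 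That part is airtight.

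The final step --- identifying $\on{Sk}_{\on{aff}}(\mu+\rho)$ with the image of $\chi_\mu$ in $R_k(G)$ for general $\mu\in\Lambda^*_+$ --- is stated somewhat tersely; what you are invoking is the Kac--Walton formula, i.e.\ the description of $I_k(G)$ as the vanishing ideal at the points $t_\nu=\f{1}{k+\cox}B^\sharp(\nu+\rho)$ together with Weyl's character formula. That is exactly what \cite{al:fi} supplies, and your two-case split (wall reflection versus interior point of the shifted alcove) matches the standard argument. The $R(G)$-linearity observation at the end is also correct, since the relevant group rings act by multiplication and skew-symmetrization commutes with multiplication by $W$-invariants.

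The only cosmetic caveat: you should say explicitly that in the wall case you are invoking the vanishing of the numerator of the Weyl character formula at the $t_\nu$, and in the interior case the standard sign/length bookkeeping for the affine reflection carrying $\mu+\rho$ into the interior of $B^\flat((k+\cox)\Alc)$; written as is, ``by the same Weyl character evaluation at the $t_\nu$'' slightly understates what is being used. But the argument is sound.
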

In fact, we could \emph{define} the ideal $I_k(G)\subset R(G)$ 
as the kernel of the map \eqref{eq:affind}. 
Let $\eps\colon C_0\to R_k(G)$ be the direct sum 
of the morphisms \eqref{eq:mor} for $|I|=1$.

\subsection{A resolution of the $R(G)$-module $R_k(G)$}
\begin{theorem}\label{th:resolution}
For all $k\ge 0$ the chain complex $(C_\bullet,\partial)$ 
defines a resolution 
\[ 0\xra{} C_l\xra{\partial} \cdots \xra{\partial} C_0\xra{\epsilon} 
R_k(G)\xra{} 0\]
of $R_k(G)$ as an $R(G)$-module. 
\end{theorem}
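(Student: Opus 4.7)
The plan is to decompose $(C_\bullet,\partial,\epsilon)$ according to the orbits of the affine Weyl group $\Waff$ on $\Lambda^*$ under the shifted action $\bullet_{k+\cox}$, and to verify exactness of each resulting augmented sub-complex. Both $\partial$ and $\epsilon$ preserve this decomposition, since they are built from skew-symmetrizations relative to reflection subgroups of $\Waff$. Under the identification $R_k(G)\cong \Z[[\Lambda^*]]^{\Waff-\on{as}}$, each $\Waff$-orbit $\mathcal{O}\subset\Lambda^*$ contributes a summand $R_k(G)^\mathcal{O}$. Since any $\Waff$-anti-invariant function automatically vanishes on orbits with non-trivial $\Waff$-stabilizer, one has $R_k(G)^\mathcal{O}\cong\Z$ exactly for the \emph{regular} orbits (whose unique representative in $B^\flat((k+\cox)\overline{\Alc})$ has the form $\mu+\rho$ for some $\mu\in\Lambda^*_k$), while $R_k(G)^\mathcal{O}=0$ for the \emph{non-regular} orbits. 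It thus suffices to prove that the augmented sub-complex
\[
0\lra C_l^\mathcal{O} \xra{\partial}\cdots\xra{\partial} C_0^\mathcal{O} \xra{\epsilon^\mathcal{O}} R_k(G)^\mathcal{O}\lra 0
\]
is exact for every orbit $\mathcal{O}$.

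Fix such an orbit, and let $J\subseteq\{0,\ldots,l\}$ denote the unique non-empty subset such that the representative $\nu_0$ lies in the relative interior of $B^\flat((k+\cox)\Alc_J)$; then $\nu_0$ has $\Waff$-stabilizer $W_J$, and the regular case corresponds to $J=\{0,\ldots,l\}$. Each summand $\Z[\mathcal{O}]^{W_I-\on{as}}$ of $C_p^\mathcal{O}=\bigoplus_{|I|=p+1}\Z[\mathcal{O}]^{W_I-\on{as}}$ is freely generated by the \emph{regular} $W_I$-orbits in $\mathcal{O}=\Waff/W_J$, i.e., by those double cosets $W_I w W_J$ with trivial joint stabilizer $W_I\cap w W_J w^{-1}=\{1\}$; these conditions are governed by the combinatorics of parabolic intersections in the affine Coxeter group.

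The key claim is that, with appropriate sign conventions, the augmented sub-complex $(C_\bullet^\mathcal{O},\partial,\epsilon^\mathcal{O})$ can be identified (after a degree reversal) with the augmented simplicial cochain complex of a naturally associated CW complex $X_\mathcal{O}$. In the regular case, $X_\mathcal{O}$ is homotopy equivalent to the affine Coxeter complex $\Sigma\cong\R^l$, whose top compactly supported cohomology $H^l_c(\R^l)\cong\Z$ matches the generator of $R_k(G)^\mathcal{O}$ and makes the augmented complex exact. In the non-regular case, $X_\mathcal{O}$ is a contractible CW complex whose compactly supported cohomology vanishes identically, so the un-augmented sub-complex is already acyclic. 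Summing over all $\Waff$-orbits then establishes the exactness asserted in the theorem.

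The principal obstacle is the key claim itself: matching $(C_\bullet^\mathcal{O},\partial)$ with an explicit simplicial cochain complex and verifying that the alternating sums of skew-symmetrizations $\on{Sk}_I^{J'}$ implementing $\partial$ correspond to simplicial coboundary maps on $X_\mathcal{O}$. This reduces to a careful Coxeter-theoretic analysis of the double cosets $W_I\backslash\Waff/W_J$ and a geometric interpretation of joint-stabilizer triviality as an incidence condition in $\Sigma$. Once this correspondence is in place, the desired exactness follows at once from the contractibility properties of $X_\mathcal{O}$.
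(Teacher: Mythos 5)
Your reduction to a decomposition indexed by the affine Weyl orbits at level $k+\cox$ (equivalently, by the open face $J$ of $(k+\cox)\Alc$ containing the orbit representative) is exactly the reduction to Theorem~\ref{th:combinatorial} that the paper carries out, so the set-up is on the right track. However, your proposed mechanism for proving that reduction has two genuine problems.

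First, the ``key claim''---that the augmented complex $C_\bullet^{\mathcal O}$ is (degree-reversed) the simplicial cochain complex of an explicit CW complex $X_{\mathcal O}$---is exactly the step that carries all the content, and you defer it entirely. It is also not literally true: the summand $\Z[V]^{W_I\text{-}\mathrm{as}}$ is the group of $W_I$-anti-invariants in $\Z[\Waff/W_J]$, not a permutation module $\Z[\Waff/W_I]$, and the differential involves correction factors $(-1)^{\ell(u_r)}$ arising from moving $x$ back into the fundamental domain $V^{\delta^r I}$. The accurate topological statement (which the paper attributes to Kitchloo and records only as a remark) is that $C_\bullet(J)$ sits as a \emph{direct summand} of $S_\bullet\otimes_{\Z[W_J]}\Z_{\mathrm{sgn}}$, where $S_\bullet$ is the simplicial chain complex of the Stiefel triangulation of $\t$; acyclicity in positive degree then follows from the $W_J$-equivariant contractibility of $\t$. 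That is a different, and more delicate, statement than ``$C_\bullet^{\mathcal O}$ is a cochain complex of a space,'' and one still has to compute $H_0$ separately.

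Second, your treatment of the non-regular case is simply incorrect: contractibility of $X_{\mathcal O}$ does not imply vanishing of compactly supported cohomology ($\R^l$ itself is contractible and has $H^l_c=\Z$, which is exactly what you invoke in the regular case). To kill $H^*_c$ in the non-regular case you would need a genuinely different geometric input, not just ``$X_{\mathcal O}$ is contractible.'' The paper avoids both problems by a direct algebraic argument: it computes $H_0(J)$ and $H_l(J)$ explicitly from the skew-symmetrization relations, and for $0<p<l$ it constructs filtration-lowering chain maps $A_i=\mathrm{id}-h_i\partial-\partial h_i$ whose composite $A_0\cdots A_l$ strictly decreases the length filtration, forcing every cycle to be a boundary. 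You would need either to carry out that chain-homotopy argument or to substantiate the Kitchloo--Morava direct-summand identification; as written, the proposal restates the problem in topological language without supplying the decisive step.
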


The proof will be given below. As mentioned
in the introduction, Theorem \ref{th:resolution} is implicit in the
work of Kitchloo-Morava \cite{kit:th}.

\begin{remark}
  If $G$ is of type $A_n$ or $C_n$, the central extensions $\wh{G}_I$
  are all trivial, thus $R(\wh{G}_I)_k\cong R(G_I)$ as an
  $R(G)$-module. By the Pittie-Steinberg theorem \cite{st:pi}, if $K$ is a maximal
  rank subgroup of a compact simply connected Lie group $G$, then
  $R(K)$ is free as a module over $R(G)$. Thus $C_\bullet$ is a free
  resolution of the $R(G)$-module $R_k(G)$ in those cases.
\end{remark}

\begin{remark}
Theorem \ref{th:resolution} implies the
Freed-Hopkins-Teleman theorem \eqref{eq:fht}: By acyclicity of the
chain complex $C_\bullet$ the spectral sequence $E^r$ collapses at the
$E^2$-term, with
\[ E^2_{p,q}=E^\infty_{p,q}=\begin{cases} R_k(G)&\text{ if $p=0$ and $q$ even}\\
0&\text{ otherwise }
\end{cases}\]
Since $R_k(G)$ is free Abelian as a $\Z$-module, there are no
extension problems and we 
conclude $K_1^G(G,\A^{k+\cox})=0$, while
\begin{equation}\label{eq:frht}
 K_0^G(G,\A^{k+\cox})=R_k(G)
\end{equation}
as modules over $R(G)$. This isomorphism takes the ring homomorphism 
$R(G)\to K_0^G(G,\A^{k+\cox})$ to the quotient map $R(G)\to R_k(G)$,
hence \eqref{eq:frht} is an isomorphism of rings. 
\end{remark}

The statement of Theorem \ref{th:resolution} can be simplified. 
Indeed, the chain complex $C_\bullet$ breaks up as a direct sum of
sub-complexes $C_\bullet(\mu),\ \mu\in\Lambda^*_k$, given as 
\[ C_p(\mu)=\bigoplus_{|I|=p+1} \Z[\Waff\bullet_{k+\cox}\mu]^{W_I-\on{as}}.\]
Similarly the map $\eps\colon C_0\to R_k(G)$ splits into a direct sum of maps 
\[ \eps\colon C_0(\mu)\to \Z[\Waff\bullet_{k+\cox}\mu]^{\Waff-\on{as}}=
\begin{cases} \Z & \text{ for
$\mu\in\Lambda^{*,\on{reg}}_{k+\cox}$}\\
0 & \text{ otherwise.}\end{cases}\]
Finally the chain map $R(G)\hra C_\bullet$ splits into inclusions of
$\Z[\Waff\bullet_{k+\cox}\mu]^{W-\on{as}}$ as the term corresponding
to $I=\{0\}$. Clearly, $(C_\bullet(\mu),\partial)$ depends only on the
open face $B^\flat((k+\cox)\Alc_J)$ of $B^\flat((k+\cox)\Alc)$
containing $\mu$. Indeed, since
$\Z[\Waff\bullet_{k+\cox}\mu]=\Z[\Waff/W_{J}]$ we have
\[ C_p(J)=\bigoplus_{|I|=p+1} \Z[\Waff/W_{J}]^{W_I-\on{as}}.\]
The differential $\partial$ is again given by anti-symmetrization as
in \eqref{eq:chain1}, but with $\phi^I$ now an element of
$\Z[\Waff/W_{J}]^{W_I-\on{as}}$.  The map $\eps\colon C_0\to R_k(G)$
translates into the zero map $C_0(J)\to 0$ unless $J=\{0\,\ldots,l\}$,
in which case it becomes a map $\eps\colon C_0(J)\to \Z$, given as
the direct sum for $i=0,\ldots,l$ of the maps, 
\[ \Z[\Waff]^{W_i-\on{as}}\to \Z,\ \ \ \sum_w n_w w\mapsto \sum_W
n_w (-1)^{\length(w)}.\]
The map $R(G)\to C_\bullet$ is again the inclusion of the summand of
$C_0(J)$ corresponding to $I=\{0\}$.

%\Z$ to $\on{Sk}^{\{0\}}(1\!\mod W_J)$ (this is the zero map unless
%$J=\{0,\ldots l\}$, i.e. $W_J=\{1\}$).  

%
Theorem \ref{th:resolution} is now reduced to the following simpler statement: 
\begin{theorem}
\label{th:combinatorial}
The homology $H_\bullet(J)$ of the chain complex $C_\bullet(J)$
vanishes in degree $p>0$, while 
\[ H_0(J)=
\begin{cases}0&\mbox{ if }J\not=\{0,\ldots,l\}\\
\Z&\mbox{ if }J=\{0,\ldots,l\}
\end{cases}
\]
In the second case, the isomorphism is induced by the augmentation map
$\eps\colon C_0(J)\to \Z$.
\end{theorem}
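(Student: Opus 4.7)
The strategy is to reinterpret $(C_\bullet(J),\partial)$ via a $\Hom$-duality and thereby reduce the question to a computation in the homological algebra of $\Waff$-modules. The key adjunction is
\[
\Z[\Waff/W_J]^{W_I\text{-}\on{as}} \;\cong\; \Hom_{\Z W_I}\!\bigl(\on{sign}_{W_I},\; \Z[\Waff/W_J]\bigr),
\]
which identifies
$C_\bullet(J) \cong \Hom_{\Z\Waff}(D_\bullet,\, \Z[\Waff/W_J])$
for a complex of left $\Z\Waff$-modules $D_\bullet$ with $D_p = \bigoplus_{|I|=p+1} \Z\Waff \otimes_{\Z W_I} \on{sign}_{W_I}$, whose differentials are dual to the skew-symmetrization maps $\on{Sk}_I^{\delta^r I}$.

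The second step is to interpret $D_\bullet$ geometrically as a sign-twisted cellular chain complex associated to the affine Coxeter arrangement on $\t$: each summand of $D_p$ corresponds to the $\Waff$-orbit of faces of type $I$ in the arrangement, with the $\on{sign}_{W_I}$ factor accounting for the action of the stabilizer $W_I$ on the orientation of the direction transverse to the face $\Alc_I$. Since $\t$ is contractible, careful tracking of this sign twist should exhibit $D_\bullet$ as a free $\Z\Waff$-module resolution of a specific module $M$ concentrated in a single degree.

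Combining the two steps yields
\[
H_\bullet(C_\bullet(J)) \;\cong\; \on{Ext}^\bullet_{\Z\Waff}\bigl(M,\; \Z[\Waff/W_J]\bigr),
\]
which by Shapiro's lemma reduces to computing cohomology of the finite subgroup $W_J$ with coefficients determined by the restriction of $M$. For $J = \{0,\ldots,l\}$ one has $W_J = \{1\}$, giving $\Z$ in degree zero (identified with the augmentation $\eps$) and vanishing in higher degrees. For $J \ne \{0,\ldots,l\}$ one picks any $i \notin J$; the reflection $s_i$ lies in $W_J$ and acts by $-1$ on the sign-twisted coefficients, and a transfer/averaging argument using $s_i$ forces all the relevant cohomology groups to vanish.

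The main obstacle is the second step: precisely identifying the module $M$ and verifying the quasi-isomorphism between $D_\bullet$ and a shift of $M$, particularly getting the sign and orientation conventions to match up with the skew-symmetrization differentials. As a more hands-on backup, one can attempt to construct an explicit chain contraction for $C_\bullet(J)$ in the case $J \ne \{0,\ldots,l\}$, built from the involution on $\Waff/W_J$ induced by multiplication by $s_i$, and separately handle the case $J = \{0,\ldots,l\}$ by identifying the augmented complex with the augmented cellular chain complex of the contractible space $\t$ equipped with its affine alcove decomposition.
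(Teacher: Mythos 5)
Your approach is closest in spirit to the Kitchloo--Morava argument that the paper mentions in the concluding Remark of Section \ref{sec:proof} but does \emph{not} use; the paper's own proof is an entirely hands-on combinatorial argument that introduces a $\Z$-filtration of $C_\bullet(J)$ by the length function on $V=\Waff.\nu_J^\sharp$, builds chain-homotopy operators $h_i$, and shows that the product $A=A_0\cdots A_l$ of the chain maps $A_i=\on{id}-h_i\partial-\partial h_i$ strictly decreases filtration degree in intermediate degrees $0<p<l$ (with separate elementary arguments for $p=0$ and $p=l$).

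There is a genuine gap in your Step 2 that blocks the $\on{Ext}$ reduction. The modules $D_p=\bigoplus_{|I|=p+1}\Z\Waff\otimes_{\Z W_I}\on{sign}_{W_I}$ are \emph{not} free (or even projective) over $\Z\Waff$: each summand is a quotient of $\Z\Waff$ in which the reflections of $W_I$ have been forced to act by $-1$, so by Eckmann--Shapiro $\on{Ext}^1_{\Z\Waff}(\Z\Waff\otimes_{\Z W_I}\on{sign}_{W_I},N)\cong\on{Ext}^1_{\Z W_I}(\on{sign}_{W_I},N)$, which is nonzero already for $W_I\cong\Z/2$ and suitable $N$. Thus even if $D_\bullet$ is exact except in one spot (which it is, coming from the contractibility of $\t$), applying $\Hom_{\Z\Waff}(-,\Z[\Waff/W_J])$ to it does \emph{not} compute derived functors without an additional argument controlling the higher $\on{Ext}$'s of the $D_p$. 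A related problem affects your use of Shapiro's lemma: $\Z[\Waff/W_J]$ is the module \emph{induced} from the finite subgroup $W_J$, not coinduced, and $W_J$ has infinite index, so the cohomological (Ext) form of Shapiro does not apply as stated. Finally, the honest version of this road (which the cut Kitchloo--Morava appendix sketches) splits $C_\bullet(J)$ off the sign-twisted chain complex of the Stiefel diagram; carrying it out shows $H_p=0$ for $p>0$, but in degree $0$ one lands in the coinvariants $\Z\otimes_{\Z W_J}\on{sign}$, which is $\Z/2$ whenever $W_J\neq\{1\}$ — so a further argument is needed to distinguish the $0$ summand coming from $C_\bullet(J)$ from the $\Z/2$ lying in the complement. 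Your proposed backup, a chain contraction built from left multiplication by a reflection $s_i$ with $i\notin J$, does not directly give a chain homotopy either, because this operation does not intertwine the skew-symmetrization differentials $\on{Sk}_I^{\delta^rI}$ for the varying parabolics $W_I$; the paper's actual argument is considerably more delicate than a single contracting homotopy.
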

 
\subsection{Proof of Theorem \ref{th:combinatorial}}\label{sec:proof}
Throughout this Section, we consider a given face $\Alc_J$ of the
alcove.  We may think of $\Waff/W_J$ as the $\Waff$-orbit of a point
in the interior of the face $\Alc_J$, under the standard action of
$\Waff$ on $\t$.  To be concrete, let us take the point
$\nu_J^\sharp$.
%The chain map $\Z\to C_\bullet(J)$ is the
%morphism taking $1\in\Z$ to
%
%\[ \on{Sk}^{\{0\}}(\nu_J^\sharp)\in C_0(J).\]
%
%The augmentation map $C_0(J)\to \Z$ for $J=\{0,\ldots,l\}$ is given on
%$\Z[\Waff]^{W_i-\on{as}}\subset C_0$ by
%$\on{Sk}^{\{i\}}(w.\nu_J^\sharp)\mapsto (-1)^{\length(w)}$.
Denote its orbit by
\[ V=\Waff.\nu_J^\sharp\subset\t.\]
We introduce a length function $\length\colon V\to \Z$, defined in
terms of the function on $\Waff$ as
\[ \length(x)=\on{min}\{\length(w)| w\in\Waff,\ x=w.\nu_J^\sharp\},\ \ x\in V. \]
Geometrically, $\length(x)$ is the number of affine root hyperplanes
in the Stiefel diagram, crossed by a line segment from the a point in
the interior of $\Alc$ to the point $x$.

For any $I$ let $\t_{I,+}$ be defined by the inequalities
 $\l\alpha_i,\cdot\r+\delta_{i,0}\ge 0$ for
 $\alpha_i\in\mf{S}_I$. (Equivalently, it is the affine cone over
 $\Alc$ at $\nu_I^\sharp$.) Then $\t_{I,+}$ is a fundamental domain
 for the $W_I$-action.
Let $V^I\subset \ol{V}^I\subset V$ be the subsets, 
\[ V^I=V\cap \on{int}(\t_{I,+}),\ \ \ \ol{V}^I=V\cap \t_{I,+}.\]
Every $W_I\subset \Waff$-orbit contains a unique point in $\ol{V}^I$.
Thus, if $x\in V$, we may choose  $u\in W_I$ with $u.x\in \ol{V}^I$.
Then
\[ \length(u.x)\le \length(x),\]
with equality if and only if $x\in \ol{V}^I$ and hence $u.x=x$.  

The
elements
\begin{equation}\label{eq:basis}
 \beta_I(x)=\Sk^I(x),\ \ x\in V^I
\end{equation} 
form a basis of the $\Z$-module $\Z[V]^{W_I-\on{as}}$. (Note that if
$x\in \ol{V}^I\backslash V^I$ then $\on{Sk}^I(x)=0$.)  Let us describe the differential in terms of this basis. For $|I|=p+1$ and $x\in V^I$ we have, 
\[\partial\beta_I(x)= \sum_{r=0}^p (-1)^r \on{Sk}^{\delta_r I}(x).\]
In general, the terms $\on{Sk}^{\delta_r I}(x)$ are not standard basis
elements, since $x$ need not lie in $V^{\delta_r I}$. 
Letting $u_r\in W_{\delta^rI}$ be the unique element such that $u_r x\in
V^{\delta^rI}$, we have
\begin{equation}\label{eq:ddd}
 \partial\beta_I(x)=\sum_{r=0}^n
 (-1)^{r+\length(u_r)}\beta_{\delta^rI}(u_r x).\end{equation}

\subsubsection{Computation of $H_0(J)$}
Consider $C_0(J)=\bigoplus_{i=0}^p \Z[V]^{W_i-\on{as}}$. For all $i,j$
and all $x$, the elements $\on{Sk}^j(x),\on{Sk}^i(x)$ are homologous
since they differ by the boundary of $\on{Sk}^{ij}(x)\in C_1(J)$.
Together with $\on{Sk}^j(x)=(-1)^{\length(w)} \on{Sk}^j(wx)$ for $w\in
W_j$, this implies
\[ \on{Sk}^i(x)\sim (-1)^{\length(w)} \on{Sk}^i(wx)\]
for $w\in W_j$. Since the subgroups $W_j$ generate $\Waff$, this
holds in fact for all $w\in\Waff$. Thus 
\[ \on{Sk}^j(w.\nu_J^\sharp)\sim \on{Sk}^i(w.\nu_J^\sharp)\sim (-1)^{\length(w)}\on{Sk}^i(\nu_J^\sharp)\]
for all $i,j$, and all $w\in\Waff$. If $J\not=\{0,\ldots,l\}$, the
choice of any $i\not\in J$ gives $\on{Sk}^i(\nu^\sharp_J)=0$. This proves
$H_0(J)=0$.  Suppose now $J=\{0,\ldots,l\}$.  The augmentation map
$C_0(J)\to \Z$ is described in terms of the basis by
$\beta_i(x)\mapsto (-1)^{\length(x)}$. It has a right inverse $\Z\to
C_0(J),\ 1\mapsto \beta_0(\nu^\sharp_0)$.  Hence the induced map in homology
$\Z\to H_0(J)$ is injective, but also surjective since
$\on{Sk}^i(x)\sim(-1)^{\length(x)}\beta_0(\nu^\sharp_0)$. Thus $H_0(J)=\Z$ in
this case.
\subsubsection{Computation of $H_l(J)$}
Suppose $\phi\in C_l(J)=\Z[V]$. Then $\partial\phi=0$ if and only if
$\on{Sk}^{0\cdots\hat{i}\cdots l}\phi=0$ for all $i$. That is, $\phi$
is invariant under every reflection $\sig_i\in\Waff$, hence under the
full affine Weyl group $\Waff$. But since $\phi$ has finite length
this is impossible unless $\phi=0$. This shows $H_l(J)=0$. 

\subsubsection{Computation of $H_p(J),\ 0<p<l$}
To simplify notation, we will write $C_\bullet$ instead of
$C_\bullet(J)$. (This should of course not be confused with the chain complex
$C_\bullet$ considered in previous sections.)
Introduce a $\Z$-filtration
\[ 0=F_{-1}C_\bullet\subset F_0C_\bullet\subset F_1C_\bullet\subset \cdots \]
where $F_NC_p$
is spanned by basis elements \eqref{eq:basis} with $|I|=p+1$ and 
$\length(x)\le N$. 
Formula \eqref{eq:ddd} shows
that for any 
basis element $\beta_I(x)\in F_NC_p$,  
\begin{equation}\label{eq:dd1}
 \partial \beta_I(x)={\sum_r}' (-1)^r \beta_{\delta_r I}(x)\mod
 F_{N-1}C_{p-1}\end{equation}
where the sum is only over those $r$ for which $x\in V^{\delta^r
  I}\subset V^I$, i.e. $u_r=1$ (other terms lower the filtration
degree since $\length(u_r x)< \length(x)$ unless $x=u_r x$).  In particular,
$\partial$ preserves the filtration. Define operators
$h_i:\,C_p\to C_{p+1}$ on basis elements, as follows:
\[ h_i\beta_I(x)= \begin{cases}
(-1)^r\beta_{I\cup\{i\}}(x)&\text{ if }\  
i_{r-1}<i<i_r,
\\
0&\text{ if $i=i_r$, some $r$. }\end{cases}
\]
Note that $h_i$ preserves the filtration: $h_i(F_NC_p)\subset F_NC_{p+1}$. 
%
%\begin{remark}
%The operators $h_i$ are homotopy operators for the differential
%$\partial'$, given by a formula similar to \eqref{eq:dd1} but with no
%restriction on the summation.
%\end{remark}
%
Let
\[A_i=\on{id}-h_i\partial-\partial h_i.\] 
Then $A_i$ is a chain map, which is homotopic 
to the identity map. 
\begin{lemma}
Let $p>0$. 
For any basis element $\beta_I(x)\in F_NC_p$  we have 
$A_i\,\beta_{I}(x)\in F_{N-1}C_p$
\emph{unless}  $i\in I$ and $x\not\in V^{I-\{i\}}$. In the latter case, 
\[ A_i \beta_{I}(x)=
\beta_{I}(x)\mod  F_{N-1}C_p.\]
\end{lemma}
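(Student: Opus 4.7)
The plan is to compute $A_i\beta_I(x)=\beta_I(x)-h_i\partial\beta_I(x)-\partial h_i\beta_I(x)$ directly modulo $F_{N-1}C_p$, using the congruence \eqref{eq:dd1} for $\partial$ together with the explicit formula for $h_i$. Two cases arise naturally, according to whether $i\in I$ or $i\notin I$, and the bookkeeping will be essentially a simplicial identity of the form $h_i\partial+\partial h_i=\on{id}$ on generators, broken only when $i\in I$ and $x$ lies on a wall of $\t_{I\setminus\{i\},+}$.

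First I would dispose of the case $i\notin I$. Writing $I=\{i_0<\cdots<i_p\}$ and letting $s$ be the unique index with $i_{s-1}<i<i_s$, set $I'=I\cup\{i\}$, so that $\delta^s I'=I$ and, for $r\neq s$, $\delta^r I'$ is obtained from $I$ by removing one element and inserting $i$. Using \eqref{eq:dd1}, the term $\partial h_i\beta_I(x)=(-1)^s\partial\beta_{I'}(x)$ contributes, modulo $F_{N-1}$, only the basis elements $\beta_{\delta^r I'}(x)$ for which $x\in V^{\delta^r I'}$; the $r=s$ contribution is precisely $\beta_I(x)$ (the condition $x\in V^I$ is automatic), and the remaining contributions come with sign $(-1)^{r+s}$. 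For $h_i\partial\beta_I(x)$, applying $h_i$ to each $\beta_{\delta^r I}(x)$ inserts $i$ into $\delta^r I$ in position $s-1$ (if $r<s$) or $s$ (if $r>s$); since $V^{\delta^r I'}\subset V^{\delta^r I}$, the terms that survive modulo $F_{N-1}$ are exactly the same indices as before, and a short sign computation gives $(-1)^{s-1+r}\beta_{\delta^r I'}(x)$. Adding the two expressions, all the $r\neq s$ contributions cancel and only $\beta_I(x)$ survives. Thus $A_i\beta_I(x)\equiv 0 \mod F_{N-1}$.

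For the case $i\in I$, write $i=i_s$. Then $h_i\beta_I(x)=0$, so $A_i\beta_I(x)=\beta_I(x)-h_i\partial\beta_I(x)$. In the sum \eqref{eq:dd1}, the operator $h_i$ kills $\beta_{\delta^r I}(x)$ whenever $i\in\delta^r I$, i.e.\ for every $r\neq s$. Only the term $r=s$, where $\delta^s I=I\setminus\{i\}$, can survive. That term is present modulo $F_{N-1}$ precisely when $x\in V^{I\setminus\{i\}}$, and in that case $h_i\beta_{\delta^s I}(x)=(-1)^s\beta_I(x)$, so $h_i\partial\beta_I(x)\equiv\beta_I(x)$. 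Hence $A_i\beta_I(x)\equiv 0$ when $x\in V^{I\setminus\{i\}}$, while $A_i\beta_I(x)\equiv\beta_I(x)$ when $x\notin V^{I\setminus\{i\}}$, exactly as claimed.

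The main obstacle is not conceptual but clerical: keeping track of the shifts in the position of $i$ inside the ordered sets $\delta^r I$ and $\delta^r I'$, so that the signs $(-1)^r$ coming from $\partial$ combine correctly with those coming from $h_i$. A secondary point requiring a word of justification is that, although $h_i\beta_{\delta^r I}(x)$ is defined by the formula as $\pm\beta_{\delta^r I\cup\{i\}}(x)$ even when $x\notin V^{\delta^r I\cup\{i\}}$, the only way this can fail to be a standard basis element is if there exists a nontrivial $u\in W_{\delta^r I\cup\{i\}}$ with $ux\in V^{\delta^r I\cup\{i\}}$; but then $\length(ux)<\length(x)=N$, so such terms vanish modulo $F_{N-1}$ and may be discarded from the outset.
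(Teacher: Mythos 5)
Your proof follows the same route as the paper's: decompose according to whether $i\in I$, compute $h_i\partial$ and $\partial h_i$ modulo $F_{N-1}$ using \eqref{eq:dd1}, and cancel. Case 1 ($i\in I$) and the closing remark about lengths are fine.

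There is, however, a small but genuine flaw in Case 2 ($i\notin I$). You assert that the indices $r$ for which $\beta_{\delta^r I}(x)$ survives (i.e.\ $x\in V^{\delta^r I}$) in $h_i\partial\beta_I(x)$ are the same as the indices $r'$ for which $\beta_{\delta^{r'}I'}(x)$ survives (i.e.\ $x\in V^{\delta^{r'}I'}$) in $\partial h_i\beta_I(x)$, and you justify this by ``$V^{\delta^r I'}\subset V^{\delta^r I}$.'' That inclusion is actually reversed: since $\delta^r I\subset \delta^r I\cup\{i\}=\delta^{r'}I'$ (indices suitably matched), one has $V^{\delta^r I}\subset V^{\delta^{r'}I'}$, because passing to a larger index set removes defining inequalities and hence enlarges $V^{\bullet}$. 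The containment you state therefore gives neither direction of the equivalence you need. What does give it is the identity used in the paper, namely
\[
V^{\delta^r I}=V^{\,I\cup\{i\}\setminus\{i_r\}}\cap V^{I},
\]
together with the standing assumption $x\in V^I$: then $x\in V^{\delta^r I}$ if and only if $x\in V^{\,I\cup\{i\}\setminus\{i_r\}}$, which is exactly the statement that the surviving index sets agree. With this substitution your sign bookkeeping goes through and the two sums cancel as you claim; without it, the key step ``the terms that survive are exactly the same indices'' is unsupported.
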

\begin{proof}
Write $I=\{i_0,\ldots,i_p\}$ where $i_0<\cdots<i_p$.  Using
\eqref{eq:dd1} we obtain
\begin{equation}\label{eq:subtle}
 h_i\partial \beta_I(x)={\sum_r}' (-1)^r h_i\beta_{\delta^r I}(x)\mod
 F_{N-1}C_p,\end{equation}
summing over indices with $x\in V^{\delta^r I}\subset V^I$. 
The calculation of $A_i\beta_I(x)$ divides into two cases: 
\noindent{\bf Case 1:} $i\in I$. Thus $i=i_{s}$ for some index $s$, and 
$(-1)^r h_i\beta_{\delta^r I}(x)=0$ unless $r=s$, in which case 
one obtains $\beta_I(x)$. Hence all terms in the sum \eqref{eq:subtle} 
vanish, except possibly for the term $r=s$ which appears if and only if 
$x\in V^{\delta^s I}=V^{I-\{i\}}$. That is, 
\[ h_i\partial\beta_I(x)=\begin{cases}\beta_I(x)\mod F_{N-1}C_p &\text{ if }x\in V^{I-\{i\}}\\
0 \mod F_{N-1}C_p &\text{ if }x\not\in V^{I-\{i\}}\end{cases}\]
(using the assumption $p>0$). Since $h_i\beta_I(x)=0$ this shows 
$A_i\beta_I(x)\in F_{N-1}C_p $ \emph{unless} $x\not \in V^{I-\{i\}}$, in which case $A_i \beta_I(x)=\beta_I(x) \mod F_{N-1}C_p$. 

\noindent {\bf Case 2:} $i\not\in I$. 
Exactly one of the terms 
in $\partial h_i \beta_I(x)$ reproduces $\beta_I(x)$. The remaining terms 
are organized in a sum similar to  \eqref{eq:ddd}: 
\[ \partial h_i \beta_I(x)=\beta_I(x)-{\sum_r}''(-1)^r
h_i\beta_{\delta^r I}(x) \mod F_{N-1}C_p,\]
where the sum is over all $r$ such that $x\in V^{I\cup\{i\}-\{i_r\}}$.
But $x\in V^{\delta^r I} \Leftrightarrow 
x\in V^{I\cup\{i\}-\{i_r\}}$, since
\[ V^{\delta^r I} =V^{I\cup\{i\}-\{i_r\}}\cap V^I .\]
Hence the sum $\sum_r'$ and $\sum_r''$ are just the same. This proves  
$A_i\,\beta_{I}(x)\in F_{N-1}C_p$. 
\end{proof}

Consider now the product $A:=A_0\cdots A_l$. 
By iterated application of the Lemma, we find that if 
$0<p<l$, then $A\beta_I(x)\in F_{N-1}C_p$
(because at least one index $i$ is not in $I$). Thus 
\[ A\colon F_NC_p\to F_{N-1}C_p\]
for $0<p<l$. 
The chain map $A$ is chain homotopic to the identity, since each of 
its factors are. Thus, if $\phi\in F_NC_p$ is a cycle,  
\[\phi\sim A\phi\sim\cdots  A^{N+1}\phi=0.\] 
This proves $H_p(J)=0$ for $0<p<l$, and concludes the proof of Theorem
\ref{th:combinatorial}.

\begin{remark}
  N. Kitchloo pointed out a more elegant proof of Theorem
  \ref{th:combinatorial}, along the lines of Kitchloo-Morava
  \cite{kit:th}.  His argument produces an inclusion of $C_\bullet(J)$
  as a direct summand of $S_\bullet\otimes_{\Z[W_J]}
  \Z$, where $S_\bullet$ is the simplicial complex with respect to the
  Stiefel diagram, and $\Z[W_J]$ acts on $\Z$ by the sign
  representation.  The acyclicity of $C_\bullet(J)$ then follows from
  the $W_J$-equivariant acyclicity of $S_\bullet$.
\end{remark}

\begin{appendix}
\section{Morita isomorphisms and  stable isomorphisms}
Let $\mathbb{H}_G$ be the stable $G$-Hilbert space, i.e. the 
unique (up to isomorphism) $G$-Hilbert space
containing all finite-dimensional unitary $G$-representations with
infinite multiplicity. (As a model, one may take for instance
$\mathbb{H}_G=L^2(G)\otimes L^2(\R)$.) Given a closed subgroup $H\subset G$,
the space $\mathbb{H}_G$ with the restricted action is a model for
$\mathbb{H}_H$. If $\ca{H}$ is any $G$-Hilbert space, its stabilization
$\H^{\on{st}}=\H\otimes\mathbb{H}_G$ is isomorphic to $\mathbb{H}_G$.  This
generalizes to bundles:
\begin{lemma}
  For any $G$-Hilbert bundle $\E\to X$, the stabilization
  $\E^{\on{st}}=\E\otimes \mathbb{H}_G$ is equivariantly isomorphic to
  the trivial bundle $X\times \mathbb{H}_G$, with diagonal $G$-action.
  Moreover, the isomorphism is unique up to a $G$-equivariant
  homotopy.
\end{lemma}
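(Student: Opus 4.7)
The plan is to reduce both assertions to the equivariant Kuiper theorem, which states that the group $\on{U}_G(\mathbb{H}_G)$ of $G$-equivariant unitary operators on $\mathbb{H}_G$, equipped with the strong operator topology, is contractible. Once this is in hand, the lemma becomes essentially formal.

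First I would identify the typical fiber. For any $G$-Hilbert space $\H$, the tensor product $\H\otimes\mathbb{H}_G$ still contains every irreducible finite-dimensional unitary $G$-representation with infinite multiplicity (inherited from the second factor), hence is $G$-isomorphic to $\mathbb{H}_G$ by the characterization of the stable $G$-Hilbert space. Consequently $\E^{\on{st}}\to X$ is a locally trivial $G$-Hilbert bundle with typical fiber $\mathbb{H}_G$ and structure group $\on{U}_G(\mathbb{H}_G)$. Since $X$ is a countable $G$-CW complex and the structure group is contractible, an obstruction-theoretic (cell-by-cell) argument produces a global section of the associated principal bundle, yielding a $G$-equivariant isomorphism $\phi\colon \E^{\on{st}} \xra{\cong} X\times\mathbb{H}_G$.

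For uniqueness up to $G$-equivariant homotopy, let $\phi_0,\phi_1$ be two such trivializations. Their composition $\phi_1\circ\phi_0^{-1}$ is a $G$-equivariant unitary bundle automorphism of $X\times\mathbb{H}_G$, equivalently a continuous map $g\colon X\to \on{U}_G(\mathbb{H}_G)$. Since the target is contractible and $X$ has the $G$-homotopy type of a CW complex, $g$ is homotopic to the constant map $x\mapsto\on{id}$ through continuous $G$-equivariant maps, and this homotopy yields the desired homotopy of trivializations.

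The main obstacle is the equivariant Kuiper theorem itself. I would establish it via the Peter-Weyl type isotypical decomposition
\[
\mathbb{H}_G \;\cong\; \bigoplus_{\lam} V_\lam\otimes M_\lam,
\]
where $V_\lam$ runs over the irreducible unitary $G$-representations and each multiplicity space $M_\lam$ is a separable infinite-dimensional Hilbert space. Any $G$-equivariant unitary preserves this decomposition and acts as the identity on each $V_\lam$, so
\[
\on{U}_G(\mathbb{H}_G) \;\cong\; \prod_{\lam}\on{U}(M_\lam)
\]
in the strong operator topology. Each factor is contractible by the classical Kuiper theorem, and an arbitrary product of contractible spaces is contractible in the product topology, completing the argument.
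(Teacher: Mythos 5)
Your plan shares the right core ingredients with the paper's proof (cell-by-cell induction over a $G$-CW decomposition together with Kuiper's theorem in some equivariant form), and the isotypic-decomposition proof of the equivariant Kuiper theorem at the end is a genuine and useful addition that the paper treats as known. However, the framing in the middle has a real gap that surfaces precisely in the case of interest, namely when $G$ acts \emph{nontrivially} on $X$.

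A $G$-equivariant Hilbert bundle over a nontrivial $G$-space $X$ is \emph{not} a principal $\on{U}_G(\mathbb{H}_G)$-bundle, and two equivariant trivializations do \emph{not} differ by a map $g\colon X\to\on{U}_G(\mathbb{H}_G)$. If $\phi_0,\phi_1$ are equivariant trivializations, write $\phi_1\circ\phi_0^{-1}(x,v)=(x,g(x)v)$; equivariance forces $g(\gamma x)=\gamma\,g(x)\,\gamma^{-1}$, so $g$ is a $G$-\emph{equivariant} map $X\to\on{U}(\mathbb{H}_G)$ for the conjugation action, not a map into the fixed-point subgroup $\on{U}_G(\mathbb{H}_G)=\on{U}(\mathbb{H}_G)^G$. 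Likewise there is no single structure group: near a point with stabilizer $H$, a $G$-invariant trivializing neighborhood has the form $G\times_H V$, and the transition data live in $\on{U}_H(\mathbb{H}_G)$, which varies with the cell. Your appeal to ``the associated principal bundle'' with structure group $\on{U}_G(\mathbb{H}_G)$ therefore does not apply.

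The repair is exactly what the paper does: induct over the cells $G/H\times \on{D}^r$ of a $G$-CW decomposition, reduce each extension/uniqueness problem over such a cell (via $G\times_H\mathbb{H}_G\cong G/H\times\mathbb{H}_G$) to one over $eH\times\on{D}^r$ with trivial $H$-action on the base, and there invoke contractibility of $\on{U}_H(\mathbb{H}_G)$ in the strong topology. Your Peter--Weyl decomposition
\[
\on{U}_H(\mathbb{H}_G)\;\cong\;\prod_\lambda \on{U}(M_\lambda)
\]
works verbatim for each closed subgroup $H\subset G$ (since $\mathbb{H}_G$ restricted to $H$ is a stable $H$-Hilbert space), so the heart of your argument carries over once you let the relevant subgroup $H$ vary with the cell rather than fixing $G$ throughout. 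In short: the content of your Kuiper step is right and is what makes the paper's cell-wise argument go; but the principal-bundle/structure-group packaging is incorrect as stated and needs to be replaced by the equivariant cell-by-cell argument using $\on{U}_H(\mathbb{H}_G)$.
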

\begin{proof}
  Using induction over the cells of a $G$-CW decomposition of $X$, it
  is enough to show that for any $G$-Dixmier-Douady bundle $\E\to
  G/H\times \on{D}^r$, a given $G$-equivariant trivialization over $
  G/H\times \partial \on{D}^r$ extends to all of $G/H\times \on{D}^r$,
  and that the extension is unique up to homotopy. Since $G\times_H
  \mathbb{H}_G\cong G/H\times \mathbb{H}_G$, it suffices to extend the
  trivialization over $eH\times \partial D^r$ to an $H$-equivariant
  isomorphism between $\E$ and $\mathbb{H}_G$ over $eH\times
  \on{D}^r$. By the contractibility of the unitary group in the strong
  operator topology, and since the $H$-action on the base is trivial,
  such an extension exists and is unique up to homotopy.
\end{proof}

\begin{lemma}
For any $G$-Dixmier-Douady bundle $\A$, there is a canonical
equivariant Morita isomorphism $(\E,\psi)\colon \A\simeq
\A^{\on{st}}$.
\end{lemma}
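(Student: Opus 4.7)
The plan is to exhibit the Morita isomorphism explicitly, using the Hilbert--Schmidt bundle construction recalled in Section \ref{subsec:az}. Recall that for $\A=\ca{P}\times_{\on{PU}(\H)}\K(\H)$, the associated Hilbert--Schmidt bundle
\[ \A_{HS}=\ca{P}\times_{\on{PU}(\H)}(\H\otimes\H^{\on{opp}}) \]
comes equipped with a canonical fiberwise isomorphism $\K(\A_{HS})\cong \A\otimes\A^{\on{opp}}$, reflecting the fact that the projective action of $\on{PU}(\H)$ cancels on $\H\otimes\H^{\on{opp}}$. An important preliminary observation is that $\A_{HS}$ admits an intrinsic description as the Hilbert--Schmidt ideal of $\A$, equipped with its canonical $\A$-bimodule and Hilbert-space structure; in particular $\A_{HS}$ depends on $\A$ alone, not on the choice of principal bundle presentation.

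Next, I would define the Hilbert bundle
\[ \E=\A_{HS}\otimes\mathbb{H}_G, \]
viewed as a $G$-equivariant Hilbert bundle over $X$ via the diagonal action (with $\mathbb{H}_G$ the stable $G$-Hilbert space of the appendix). Tensoring the canonical isomorphism $\K(\A_{HS})\cong \A\otimes\A^{\on{opp}}$ with the tautological identification $\K(\mathbb{H}_G)=\K_G$ yields a $G$-equivariant isomorphism
\[ \psi\colon \K(\E)=\K(\A_{HS})\otimes\K(\mathbb{H}_G)\xra{\cong}\A\otimes\A^{\on{opp}}\otimes\K_G=\A^{\on{st}}\otimes\A^{\on{opp}}. \]
By definition, the pair $(\E,\psi)$ is a Morita trivialization of $\A^{\on{st}}\otimes\A^{\on{opp}}$, i.e., a $G$-equivariant Morita isomorphism $\A\simeq \A^{\on{st}}$.

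For canonicity (which I expect to be the most delicate point), note that both constituents of $(\E,\psi)$ are intrinsic: $\A_{HS}$ is canonically determined by $\A$, $\mathbb{H}_G$ is a fixed ambient object, and $\psi$ is a composite of canonical isomorphisms. Any competing construction of a Morita isomorphism $\A\simeq\A^{\on{st}}$ produced from the same data yields an equivalent $(\E',\psi')$, since the two differ by a $G$-equivariant Hermitian line bundle $L=\Hom_{\A^{\on{st}}\otimes\A^{\on{opp}}}(\E,\E')$, and in our construction the only latent choice is an identification of Hilbert bundle stabilizations of the form $\E^{\on{st}}\cong X\times\mathbb{H}_G$, which by the preceding lemma is unique up to $G$-equivariant homotopy. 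Hence $L$ is canonically trivial, so the equivalence class of $(\E,\psi)$ is canonical, completing the proof.
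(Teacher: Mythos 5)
Your construction is precisely the one the paper gives: set $\E=\A_{HS}\otimes\mathbb{H}_G=\A_{HS}^{\on{st}}$ and observe that $\K(\E)\cong(\A\otimes\A^{\on{opp}})\otimes\K_G=\A^{\on{st}}\otimes\A^{\on{opp}}$, yielding a Morita trivialization of the latter, hence a Morita isomorphism $\A\simeq\A^{\on{st}}$. Your closing canonicity paragraph is harmless but off-target --- the ``latent choice'' of an identification $\E^{\on{st}}\cong X\times\mathbb{H}_G$ you refer to never actually enters your construction, and the paper itself regards the canonicity as evident from the fact that no choices are made.
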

\begin{proof}
Letting $\A_{HS}$ be the Hilbert bundle implementing the Morita isomorphism 
$\A\simeq\A$. Then  $\A_{HS}^{\on{st}}=\A_{HS}\otimes \mathbb{H}_G$ 
defines a Morita isomorphism $\A\simeq \A^{\on{st}}=\A\otimes \K_G$.  
\end{proof}
The following Lemma is a very simple special case of the
Brown-Green-Rieffel Theorem. 
\begin{lemma}
  For any two $G$-Dixmier-Douady bundles $\A,\B$, there is a 1-1
  correspondence between equivalence classes of equivariant stable isomorphisms
  $\A^{\on{st}}\to \B^{\on{st}}$ and equivalence classes of equivariant Morita
  isomorphisms $\A\simeq \B$.
\end{lemma}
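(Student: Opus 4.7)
The plan is to construct explicit maps in both directions between the two sets of equivalence classes and verify they are mutually inverse, using the two preceding lemmas as the essential input.

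First I would construct the map from Morita isomorphisms to stable isomorphisms. Suppose $(\E,\psi)$ represents a Morita isomorphism $\A\simeq\B$, so that $\psi\colon\K(\E)\xra{\cong}\B\otimes\A^{\on{opp}}$ is a $G$-equivariant isomorphism.  Tensoring with $\A$ on the right and using the canonical Morita trivialization $\K(\A_{HS})\cong\A^{\on{opp}}\otimes\A$, one obtains a canonical equivariant isomorphism
\[ \A\otimes\K(\E)\;\xra{\cong}\;\B\otimes\K(\A_{HS}).\]
Now tensor both sides with $\K_G$.  By the first appendix lemma, $\E\otimes\mathbb{H}_G$ and $\A_{HS}\otimes\mathbb{H}_G$ are each equivariantly isomorphic to the trivial bundle $X\times\mathbb{H}_G$, uniquely up to equivariant homotopy.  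Taking compact operators on these identifications yields an equivariant stable isomorphism $\Phi(\E,\psi)\colon\A^{\on{st}}\to\B^{\on{st}}$, well-defined up to homotopy because the ambiguity in trivializing $\E\otimes\mathbb{H}_G$ and $\A_{HS}\otimes\mathbb{H}_G$ is by construction only up to homotopy.  A routine check then shows that equivalent Morita isomorphisms (related by an isomorphism $\E\to\E'$ intertwining $\psi,\psi'$) produce homotopic stable isomorphisms, so $\Phi$ descends to equivalence classes.

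Next I would construct the inverse. Given a stable isomorphism $\Psi\colon\A^{\on{st}}\to\B^{\on{st}}$, the second appendix lemma provides canonical Morita isomorphisms $\A\simeq\A^{\on{st}}$ and $\B\simeq\B^{\on{st}}$, implemented by $\A_{HS}\otimes\mathbb{H}_G$ and $\B_{HS}\otimes\mathbb{H}_G$ respectively.  The composition
\[ \A\;\simeq\;\A^{\on{st}}\;\xra{\Psi}\;\B^{\on{st}}\;\simeq\;\B \]
is then a Morita isomorphism $\Theta(\Psi)\colon\A\simeq\B$.  Homotopic stable isomorphisms give Morita isomorphisms related by an isomorphism of the underlying Hilbert bundles (indeed, the homotopy produces a path of unitaries intertwining the $\K(\E)$-structures), hence equivalent.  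Thus $\Theta$ also descends to equivalence classes.

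It remains to show that $\Phi$ and $\Theta$ are mutually inverse on equivalence classes. For $\Theta\circ\Phi$, unwinding definitions one finds that starting from $(\E,\psi)$, the composed Morita isomorphism uses the Hilbert bundle $\E\otimes \mathbb{H}_G\otimes \mathbb{H}_G^{\on{opp}}$ which, after contracting the two copies of $\mathbb{H}_G$ via the canonical trivialization from the first lemma, is equivalent to $\E$; this contraction is the content of the first lemma, which guarantees uniqueness up to homotopy.  A parallel argument handles $\Phi\circ\Theta$, where one verifies that the stable isomorphism obtained from $\Theta(\Psi)$ is homotopic to $\Psi$ itself, again appealing to uniqueness up to homotopy of the trivializations of $\A_{HS}\otimes\mathbb{H}_G$ and $\B_{HS}\otimes\mathbb{H}_G$.

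The main obstacle I anticipate is bookkeeping at the level of equivalence relations: one must verify that the various non-canonical choices (trivializations of stabilized Hilbert bundles) do not depend on homotopy class and that the equivalence of Morita isomorphisms—phrased via intertwining isomorphisms of Hilbert bundles, or equivalently via triviality of the line bundle $\on{Hom}_{\B\otimes\A^{\on{opp}}}(\E,\E')$—matches the equivalence of stable isomorphisms given by homotopy.  Once these compatibilities are in place (all of which reduce to the contractibility of the strong-operator unitary group and the uniqueness clause in the first appendix lemma), the bijection follows.
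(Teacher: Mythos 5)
Your proposal follows exactly the paper's argument: tensor $\psi$ with $\A$ and use $\A\otimes\A^{\on{opp}}\cong\K(\A_{HS})$ to get $\A\otimes\K(\E)\cong\B\otimes\K(\A_{HS})$, stabilize and trivialize $\E\otimes\mathbb{H}_G$ and $\A_{HS}\otimes\mathbb{H}_G$ via the first appendix lemma, and in the reverse direction compose $\A\simeq\A^{\on{st}}\cong\B^{\on{st}}\simeq\B$ using the second. The paper leaves the verification of the bijection on equivalence classes as an ``easily checked'' remark, whereas you flesh it out; otherwise the two proofs coincide.
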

\begin{proof}
Suppose $(\E,\psi)\colon \A\simeq \B$ is a $G$-Morita isomorphism. That
is, $\psi\colon \K(\E)\to \ca{B}\otimes\A^{\on{opp}}$ is a
$G$-equivariant isomorphism.  Tensoring with $\A$, and using the isomorphism
$\A\otimes\A^{\on{opp}}\cong \K(\A_{HS})$, we obtain an isomorphism
$\A\otimes \K(\E)\to \B\otimes \K(\A_{HS})$. Stabilize by tensoring
with $\K_G$. The choice of isomorphisms $\E\otimes
\mathbb{H}_G=\mathbb{H}_G$ and $\A_{HS}\otimes
\mathbb{H}_G=\mathbb{H}_G$ produces an isomorphism $\A\otimes \K_G
\xra{\cong} \B\otimes\K_G$, hence a $G$-equivariant isomorphism
\[ \A^{\on{st}}\xra{\cong} \B^{\on{st}}.\]
Conversely, given a stable equivariant isomorphism, we obtain a Morita
isomorphism by composition,
\[ \A\simeq \A^{\on{st}}\cong \B^{\on{st}}\simeq \B.\]
It is easily checked that this construction gives a bijection on
equivalence classes.
\end{proof}

\section{Relative Dixmier-Douady bundles}\label{sec:relative}
For any map $f\colon Y\to X$, and $\cone(f)$ its mapping cone,
obtained by gluing $\cone(Y)=Y\times I/Y\times\{0\}$ with $X$ by the
identification $(y,1)\sim f(y)$. Let
$H^\bullet(f)=H^\bullet(\cone(f))$ denote the relative cohomology of
$f$. Equivalently $H^\bullet(f)$ is the cohomology of the algebraic
mapping cone $C^\bullet(f)$ of the cochain map $C^\bullet(Y)\to
C^\bullet(X)$, i.e. $C^p(f)=C^{p-1}(Y)\oplus C^p(X)$ with differential
$\d(a,b)=(\d\,a-f^*b,\,\d c)$. If $f$ is a smooth map of manifolds,
the cohomology $H^\bullet(f,\R)$ may be computed using differential
forms, replacing the singular cochains in the above.

The group $H^2(f)$ has a geometric interpretation as isomorphism
classes of relative line bundles, i.e. pairs $(L,\psi_Y)$, where $L$
is a Hermitian line bundle over $X$, and $\psi_Y\colon Y\times\C\to
f^*L$ is a unitary trivialization of its pull-back to $Y$. (The class
of a relative line bundle is the Chern class of the line bundle
$\ti{L}\to \cone(f)$, obtained by gluing $\cone(Y)\times \C$ with $L$
via $\psi_Y$.

Similarly, $H^3(f)$ is interpreted in terms of relative Dixmier-Douady
bundles, i.e. triples $(\A,\E_Y,\psi_Y)$, where $\A\to X$ is a
Dixmier-Douady bundle, $\E_Y\to Y$ is a Hilbert space bundle, and
$\psi_Y\colon \K(\E_Y)\to \psi_Y^* \A$ a Morita trivialization of its
pull-back to $Y$. Given such a triple, one may construct a
Dixmier-Douady bundle $\ti{\A}\to \cone(f)$: First, use the Morita
trvialization $\psi_Y$ to define a stable trivialization $Y\times
\mathbb{K} \cong \K(\E_Y^{\on{st}}) \cong f^*\A^{\on{st}}$, then define
$\ti{\A}$ by gluing the trivial bundle $\cone(Y)\times\mathbb{K}$ with
$\A^{\on{st}}$ using this identification. We define the relative
Dixmier-Douady class $\on{DD}(\A,\E_Y,\psi_Y):=\on{DD}(\ti{A})$.

Tensor products and opposites of relative Dixmier-Douady bundles are
defines in the obvious way. A Morita trivialization of
$(\A,\E_Y,\psi_Y)$ is a triple $(\E_X,\psi_X,h_Y)$, consisting of a
Morita trivialization $\psi_X\colon \K(\E_X)\to \A$ and an isomorphism
$h_Y\colon \E_Y\to f^*\E_X$ intertwining $\psi_Y$ and $f^*\psi_X$. A
Morita isomorphism between two triples is a Morita trivialization of
their quotient. From the usual Dixmier-Douady theorem, one deduces
that $\on{DD}(\A,\E_Y,\psi_Y)$ classifies the Morita isomorphism
classes. 

More generally, one may define relative equivariant Dixmier-Douady
bundles; these are classified by an equivariant class
$\on{DD}_G(\A,\E_Y,\psi_Y) \in H^3_G(f):=H^3(f_G)$. Here 
$f_G\colon Y_G\to X_G$ is the induced map of Borel constructions.

\section{Review of Kasparov $K$-homology}
%
%
%\subsection{Twisted $K$-homology}
In this Section we review Kasparov's definition of K-homology
\cite{ka:eq,ka:to} for $C^*$-algebras. Excellent references for
this material are the books by Higson-Roe \cite{hig:ana} and Blackadar
\cite{bla:k}.
Suppose $\sA$ is a $\Z_2$-graded $C^*$-algebra, equipped with an action
of a compact Lie group $G$ by automorphisms. An equivariant Fredholm
module over $\sA$ is a triple $x=(\H,\varrho,F)$, where $\H$ is a
$G$-equivariant $\Z_2$-graded Hilbert space, $\varrho\colon \sA\to
L(\H)$ is a morphism of $\Z_2$-graded $G$-$C^*$-algebras, and $F\in
L(\H)$ is a $G$-invariant odd operator such that for all $a\in \sA$,
\[ (F^2-I)\varrho(a)\sim 0,\ (F^*-F)\varrho(a)\sim 0,\ [F,\varrho(a)]\sim 0.\]
Here $\sim$ denotes equality modulo compact operators. There is an
obvious notion of direct sum of Fredholm modules over $\sA$. One
defines a semi-group $K_0^G(\sA)$, with generators $[x]$ for each
Fredholm module over $\sA$, and equivalence relations
\[ [x]+[x']=[x\oplus x'],\] 
and 
\[ [x_0]=[x_1]\] 
provided $x_0,x_1$ are related by an `operator homotopy'
$x_t=(\H,\varrho,F_t)$ (cf.  \cite{bla:k,hig:ana}). One then proves
that every element in this semi-group has an additive inverse, so that
$K_0^G(\sA)$ is actually a group. More generally, for $q\le 0$ one
defines $K_q^G(\sA)=K_0^G(\sA\otimes \Cl(\R^q))$. 
This has the mod 2 periodicity property
$K^{q+2}_G(\sA)=K^q_G(\sA)$, which is then used to extend the
definition to all $q\in \Z$. The assignment $\sA\to K^q_G(\sA)$ is
homotopy invariant, contravariant functor, depending only on the
Morita equivalence class of $\sA$. It has the \emph{stability
property}, $K^q_G(\sA\otimes\mathbb{K}_G)=K^q_G(\sA)$. 

With this definition, let us now review some basic examples of twisted
$K$-homology groups $K_q^G(X,\A)=K^q_G(\Gamma_0(X,\A))$ for Dixmier-Douady bundles $\A\to X$. 

\begin{example}\label{ex:C1}
  Let $\A\to\pt$ be a $G$-equivariant Dixmier-Douady bundle over a
  point.  Thus $\A\cong \K(\E)$ for some Hilbert space $\E$.  As in
  Section \ref{subsec:trv} 
  the action $G\to \Aut(\A)$
  defines a central extension $\wh{G}$ of $G$ by $\U(1)$. The group
  $\wh{G}$ acts on $\E$, in such a way that the central circle acts
  with weight $1$. Let $V$ be a $\wh{G}$-module where the central
  circle acts with weight $-1$. Then the Hilbert space $\H=V\otimes\E$
  is a $G$-module. Letting $\rho\colon \C\to L(\H)$ be the action by
  scalar multiplication, the triple $(\H,\varrho,0)$ is a
  $G$-equivariant Fredholm module over $C(\pt)=\C$. This construction
  realizes the isomorphism $R(\wh{G})_{-1}\to K_0^G(\pt,\A)$.
\end{example}
\begin{example}\label{ex:c2}
  Let $M$ be a compact Riemannian $G$-manifold, and $D$ an invariant
  first order elliptic operator acting on a $G$-equivariant
  $\Z_2$-graded Hermitian vector bundle $\E=\E^+\oplus \E^-$.
  Suppose also that a finite rank $\Z_2$-graded $G$-Dixmier-Douady
  bundle $\A\to M$ acts on $\E$, where the action is equivariant and
  compatible with the grading.  Let $\H$ be the space of
  $L^2$-sections of $\E$, with the natural representation $\varrho$
  of $\Gamma(M,\A)$, and $F=D(1+D^2)^{-1/2}\in L(\H)$. The commutator
  of $F$ with elements $\varrho(a)$ for $a\in \Gamma(M,\A)$ are
  pseudo-differential operators of degree $-1$, hence are
  compact. Thus $(\H,\varrho,F)$ is an equivariant Fredholm module
  over $\Gamma(M,\A)$, defining a class in $K_0^G(M,\A)$.
\end{example}
\begin{example}\cite[page 114]{ka:con} 
  Let $M$ be a compact Riemannian $G$-manifold, and $\A=\Cl(TM)$ its
  Clifford bundle. Take $\E=\wedge T^*M$, $\H$ its space of
  $L^2$-sections, and $\varrho$ the usual action of sections of
  $\Gamma(M,\Cl(TM))$. Let $D=\d+\d^*$ be the de-Rham Dirac operator.
  By \ref{ex:c2} above, we obtain a Fredholm module $(\H,\varrho,F)$
  over $\Gamma(M,\Cl(TM))$, defining a class $[M]\in
  K_0^G(M,\Cl(TM))$. This is the \emph{Kasparov fundamental class} of
  $M$. (Actually, $\Cl(TM)$ is a Dixmier-Douady bundle only if $\dim
  M$ is even. If $\dim M$ is odd, one can use the isomorphism $
  K_0^G(M,\Cl(TM))=K_1^G(M,\Cl^+(TM))$ if needed. )
\end{example}
\begin{example}\label{ex:C4}
Let $H$ be a closed subgroup of $G$, and that $\B\to \pt$ is an
$H$-Dixmier-Douady bundle of finite rank. As explained in \ref{ex:C1},
any class in $K_0^H(\pt,\Cl(\g/\h)\otimes B)$ is realized by a
Fredholm module of the form $(\E,\varrho,0)$ where $\E$ if a Hilbert
space of finite dimension. Let $\wh{\E}=G\times_H \E$. The action of
$\Cl(T(G/H))$ defines a Dirac operator, which together with the action
of $\on{Ind}_H^G(\B)$ yields a Fredholm module and hence an element of
$K_0^G(G/H,\on{Ind}_H^G(\B))$.  This construction realizes the
isomorphism $K_0^H(\pt,\ca{B}\otimes\Cl(\g/\h)) \to
K_0^G(G/H,\on{ind}_H^G(\ca{B}))$ if $\B$ has finite rank.  Note that
since $H^3_H(\pt)$ is torsion, all $H$-Dixmier-Douady bundles over
$\pt$ are Morita isomorphic to finite rank ones.
\end{example}

%The computation of twisted $K$-homology groups for $G$-manifolds is
%often carried out using \emph{Poincar\'{e} duality}. The equivariant
%twisted $K$-theory groups $K^q_G(X,\A)$ are defined as $C^*$-algebraic
%$K$-theory groups $K_q^G(\Gamma(X,\A))$. Given two $G$-Dixmier-Douady bundles
%$\A,\ca{B}$ over a compact space $X$ there is a \emph{slant product}
%
%\[ K^i_G(X,\A)\otimes K_j^G(X,\ca{B})\to K_{j-i}^G(X,\A^{\on{opp}}\otimes \ca{B}).\]
%
%This is defined as follows: First, the multiplication map
%from $\Gamma_0(X,\A)\otimes \Gamma_0(X,\A^{\on{opp}}\otimes\ca{B})$ to $
%\Gamma_0(X,\A\otimes\A^{\on{opp}}\otimes \ca{B})$, which is Morita
%equivalent to $\Gamma_0(X,\ca{B})$, defines a map in $K$-homology, 
%\[ K^j_G(\Gamma_0(X,\ca{B}))
%\to K^j_G(\Gamma_0(X,\A)\otimes \Gamma_0(X,\A^{\on{opp}}\otimes\ca{B}))\]
%Next, the Kasparov product over $\Gamma_0(X,\A)$ defines a map 
%\[ K_i^G(\Gamma_0(X,\A))\otimes K^j_G(\Gamma_0(X,\A)\otimes
%\Gamma_0(X,\A^{\on{opp}}\otimes\ca{B}))\to K^{j-i}_G(\Gamma_0(X,\A^{\on{opp}}\otimes \ca{B})).\]
%
%The slant product is the composition of these two maps. If $X=M$ is an
%even-dimensional compact Riemannian $G$-manifold, the slant product
%with the Kasparov fundamental class $[M]$ defines the Poincar\'{e}
%duality isomorphism
%
%\[ K^q_G(M,\A)\xra{\cong} K_{q}^G(M,\A^{\on{opp}}\otimes \Cl(TM)).\]
%

\end{appendix}

\def\cprime{$'$} \def\polhk#1{\setbox0=\hbox{#1}{\ooalign{\hidewidth
  \lower1.5ex\hbox{`}\hidewidth\crcr\unhbox0}}} \def\cprime{$'$}
  \def\cprime{$'$} \def\polhk#1{\setbox0=\hbox{#1}{\ooalign{\hidewidth
  \lower1.5ex\hbox{`}\hidewidth\crcr\unhbox0}}} \def\cprime{$'$}
  \def\cprime{$'$}
\providecommand{\bysame}{\leavevmode\hbox to3em{\hrulefill}\thinspace}
\providecommand{\MR}{\relax\ifhmode\unskip\space\fi MR }
% \MRhref is called by the amsart/book/proc definition of \MR.
\providecommand{\MRhref}[2]{%
  \href{http://www.ams.org/mathscinet-getitem?mr=#1}{#2}
}
\providecommand{\href}[2]{#2}

\vskip1in

\end{document}